\renewcommand\section{\@startsection{section}{1}{\z@}{-3.5ex \@plus -1ex \@minus -.2ex}{2ex \@plus .2ex}
	{\centering\normalfont\large\scshape}}
\newcommand{\R}{\mathbb R}
\newcommand{\rbo}{\mathbb R}
\newcommand{\rn}{\mathbb R^n}
\newcommand{\sn}{S^{n-1}}
\newcommand{\kn}{\mathcal K^n}
\newcommand{\cn}{\mathcal C^n}
\newcommand{\kno}{\mathcal K_o^n}
\newcommand{\kne}{\mathcal K_e^n}
\newcommand{\csn}{C(\sn)}
\newcommand{\bx}{\pmb{x}}
\newcommand{\bu}{\pmb{\nu}}
\newcommand{\hm}{\mathcal H^{n-1}}
\newcommand{\hnn}{\mathcal H^n}
\newcommand{\hmi}{\mathcal H^i}
\newcommand{\Ln}{\mathscr{L}^n}
\newcommand\wtilde[1]{\overset{\lower.4ex\hbox{$\scriptstyle \sim$}}{#1}}
\newcommand\wst[1]{\overset{\lower.5ex\hbox{$\scriptscriptstyle \sim$}}{#1}}
\newcommand{\blb}{\raise.3ex\hbox{$\scriptstyle \pmb \lbrack$}}
\newcommand{\sblb}{\raise.1ex\hbox{$\scriptscriptstyle \pmb \lbrack$}}
\newcommand{\brb}{\raise.3ex\hbox{$\scriptstyle \pmb \rbrack$}}
\newcommand{\sbrb}{\raise.1ex\hbox{$\scriptscriptstyle \pmb \rbrack$}}
\newcommand{\bla}{\raise.2ex\hbox{$\scriptstyle\pmb \langle$}}
\newcommand{\sbla}{\raise.1ex\hbox{$\scriptscriptstyle\pmb \langle$}}
\newcommand{\bra}{\raise.2ex\hbox{$\scriptstyle\pmb \rangle$}}
\newcommand{\sbra}{\raise.1ex\hbox{$\scriptscriptstyle\pmb \rangle$}}
\newcommand{\blrb}{\raise.3ex\hbox{$\scriptstyle \pmb | $}}
\newcommand{\sblrb}{\raise.1ex\hbox{$\scriptscriptstyle \pmb | $}}
\newcommand{\st}{\scriptstyle}
\newcommand{\wt}{\widetilde}
\newcommand{\psum}{\,{+_{\negthinspace\kern-2pt p}}\,}
\newcommand{\qsum}[1]{\,{+_{\negthinspace\kern-2pt \lower -2pt \hbox{$_{_{#1}}$}}}\,}
\newcommand{\osum}{{+_{\negthinspace\kern-2pt {\rm{o}}}}\,}
\newcommand{\dpsum}{\,{\tilde+_{\negthinspace\kern-1pt p}}\,}
\newcommand{\dqsum}[1]{{\,\wt+_{\negthinspace\kern-1pt #1}}\,}
\newcommand{\lsub}[1]{\hskip -1.5pt\lower.5ex\hbox{$_{#1}$}}
\newcommand{\llsub}[1]{\hskip -1.5pt \raisebox{-.5pt}{$_{_{#1}}$}}
\newcommand{\rhok}{\rho\lsub{K}}
\newcommand{\di}{\vspace{3pt}\noindent\raisebox{2pt}{$\st\bullet$\ }}
\newcommand{\dr}[1]{{\rm ({\bf #1})}}
\newcommand{\thh}{\mathcal{H}}
\newcommand{\vv}{\mathrm{v}}
\newcommand{\eop}{\operatorname{E}}
\newcommand{\interior}{\operatorname{int}}
\DeclareMathOperator{\Int}{int}
\DeclareMathOperator{\Span}{span}
\DeclareMathOperator*\lowlim{\underline{lim}}
\begin{document}                        


\title{ Chord Measures in Integral Geometry \\ and Their Minkowski Problems }

\author{Erwin Lutwak}
\address{NYU Courant Institute of Mathematical Sciences}
\author{Dongmeng Xi}
\address{Shanghai University \&	Courant Institute}
\author{Deane Yang}
\address{NYU Courant Institute of Mathematical Sciences}
\author{Gaoyong Zhang}
\address{NYU Courant Institute of Mathematical Sciences}





\begin{abstract}
	
	To the families of geometric measures of convex bodies (the area measures of
	Aleksandrov-Fenchel-Jessen, the curvature measures of Federer, and the recently discovered dual curvature measures)
	a new family is added. The new family of geometric measures, called chord measures,
	arises from the study of integral geometric invariants of convex bodies.
	The Minkowski problems for the new measures and their logarithmic variants are proposed and attacked. When the given `data' is sufficiently regular, these problems
	are a new type of fully nonlinear partial differential equations involving dual quermassintegrals of functions.
	Major cases of these Minkowski problems are solved without regularity assumptions.
\end{abstract}

\maketitle

\numberwithin{equation}{section}

\newtheorem{theo}{Theorem}[section]
\newtheorem{coro}[theo]{Corollary}
\newtheorem{lemm}[theo]{Lemma}
\newtheorem{claim}[theo]{Claim}
\newtheorem{defi}[theo]{Definition}
\newtheorem{prop}[theo]{Proposition}
\newtheorem{conj}[theo]{Conjecture}
\newtheorem{exam}[theo]{Example}
\newtheorem{prob}[theo]{Problem}
\newtheorem{rema}[theo]{Remark}

\theoremstyle{definition}
\newtheorem*{definition-non}{Definition}

\newcommand\blfootnote[1]{%
	\begingroup
	\renewcommand\thefootnote{}\footnote{#1}%
	\addtocounter{footnote}{-1}%
	\endgroup
}




\section{  Introduction}

Convex geometric analysis as the analytic part of convex geometry is focused on the study of
geometric invariants and geometric measures associated with convex bodies in Euclidean space.
For general convex bodies, geometric measures play the roles that curvatures do for smooth hypersurfaces.
Geometric measures and geometric invariants are closely related.
When geometric invariants are viewed as
geometric functionals of convex bodies, geometric measures
often arise as differentials of the geometric invariants.
In the classical Brunn-Minkowski theory in convex geometric analysis,
{\it quermassintegrals} (such as, e.g., volume, surface area, and mean width) are the fundamental
geometric invariants. {\it Area measures}
introduced by Aleksandrov, Fenchel \& Jessen and {\it curvature measures} introduced by Federer
are the two major families of geometric measures of convex bodies
within the classical Brunn-Minkowski theory. Both families of
geometric measures are associated with quermassintegrals.
Quermassintegrals, area measures and their extensions (i.e., mixed volumes and mixed
area measures) are the main objects studied
in the classical Brunn-Minkowski theory developed by Minkowski, Aleksandrov, Fenchel, Blaschke, et al.
in the earlier part of the twentieth century. See Schneider's classic volume \cite{S14}.

In the 1970s, the dual Brunn-Minkowski theory was introduced in \cite{L75paci}
from a duality point of view.
It demonstrates a highly nontrivial duality in convex geometry, and also shows connections with
dualities in other subjects such as functional analysis. The fundamental geometric invariants
in the dual
Brunn-Minkowski theory are called {\it dual quermassintegrals}. Studying dual concepts (in this
geometric duality) to the concepts in the classical Brunn-Minkowski theory has led to
deep results.
An example is the Busemann-Petty problem in the dual Brunn-Minkowski theory,
which is much harder and deeper than its counterpart in the classical Brunn-Minkowski theory.
(See for example, \cite{Bo91, G94annals, K03, K04, GKS99, L88adv, Z99annals}, and see the books
Gardner \cite{G06book} and Koldobsky \cite{K05} for additional references.)
The geometric measures associated with dual quermassintegrals, called
{\it dual curvature measures},
were introduced very recently in \cite{HLYZ}, a work which presents the long-elusive duals of Federer's curvature measures
within the dual Brunn-Minkowski theory.

Around the mid-1930s, Blaschke established a school of Integral Geometry
in Hamburg. His school began a systematic study of convex geometry from a probabilistic point of view,
under the name of integral geometry. (See Santal\'o \cite{San} and Ren \cite{Ren}.)
Various interesting integral formulas for and involving
geometric invariants of convex bodies were established. Many of these later become important tools
in stochastic geometry. (See
Schneider-Weil \cite{SW}.) An important direction of study in integral geometry is establishing similar
integral formulas for non-convex objects such as manifolds. This started with Chern's earlier work
on the fundamental kinematic formula from a differential geometric viewpoint (see \cite{San})
and has seen dramatic developments in e.g.
\cite{Ales1, Ales2, BF11annals, BFS14gafa, HP14jams, Lud03, LR10annals}.
The study we now present is from a new perspective. Here, we open a new direction of study within integral geometry:
one aimed toward solving geometric equations which, when the given \lq data\rq\ is sufficiently regular,
are fully equivalent to partial differential equations involving dual quermassintegrals  of functions (see \eqref{0.9.991} and \eqref{0.9.992} for definition).

While in integral geometry, quermassintegrals of convex bodies remain the basic geometric invariants,
new fundamental geometric invariants arise.
{\it Chord (power) integrals} are such integral geometric invariants.
They are moment-integrals of random lines intersecting a convex body. They enjoy several important properties:
in convex geometric analysis, chord-integrals are translation invariant
analogues of the origin-dependent dual quermassintegrals \cite{Z99tams};
in integral geometry, the second moment of the area of a random plane intersecting
a convex body can be expressed as a chord-integral \cite{Z99tams}; in geometric tomography,
chord-integrals are closely related to $X$-rays \cite{G06book}; and in analysis,
integrals of Riesz potentials of
characteristic functions of convex bodies can be written as chord-integrals \cite{Ren, San}.

Chord-integrals form another main family of geometric invariants of convex bodies that stand alongside the
quermassintegrals and the dual quermassintegrals.
One of the purposes of this work is to introduce the geometric measures derived from
chord-integrals. We call them {\it chord measures}. This family of
geometric measures of convex bodies will join Aleksandrov-Fenchel-Jessen's area measures,
Federer's curvature measures, and the recently discovered dual curvature measures.
Once chord measures are constructed, their $L_p$ extensions follow naturally.
The critical $L_0$ case, which we call {\it cone-chord measures}, will then be studied.
We shall then tackle the Minkowski problem for chord measures as well as the log-Minkowski problem
for cone-chord measures. The log-Minkowski problem for cone-chord measures is intimately
connected with the concentration problem for cone-chord measure.

This work establishes new connections between
integral geometry and geometric measure theory and demonstrates how partial differential
equations arise naturally within
integral geometry. All this opens new directions of investigation within integral geometry,
different from classical integral geometry. In the immediately following section we make the
above more precise beginning with a quick review of the classical construction of
geometric measures of convex bodies.

\vspace{5pt}

\noindent
{\bf Construction in the Classical Brunn-Minkowski Theory.}
The fundamental {\it surface area measure} $S(K,\cdot)$ associated with the convex body $K\subset\rn$ is
a Borel measure on the unit sphere $\sn$ which (as will be seen) can be directly  derived from $V$,
the volume functional (i.e. Lebesgue measure in $\rn$). If $K$ is a sufficiently smooth
body, then $S(K,\cdot)$ has a density which is the reciprocal Gauss curvature of $\partial K$
viewed as a function of outer unit normals of $\partial K$.
The geometric definition of $S(K,\cdot)$ for an arbitrary convex body $K\subset\rn$ is
\begin{equation}\label{0.0}
	S(K, \eta) = \hm(\nu_K^{-1}(\eta)), \ \ \ \ \ \  \text{Borel}\ \eta \subset \sn,
\end{equation}
where $\hmi$ is $i$-dimensional
Hausdorff measure and
$\nu_K:\partial K \to\sn$ is the Gauss map.\negthinspace\negthinspace\footnote{The Gauss map is
	well known to be defined
	$\hm$-almost everywhere on $\partial K$.} There is an analytic construction of the surface area measure
which shows how the surface area measure $S(K,\cdot)$ arises from the derivative of
the volume functional $V$ when evaluated at $K$.
For all convex bodies $K,L\subset\rn$,
\begin{equation}\label{0.1}
	\frac{d}{dt}\Big|_{t=0^+} V(K+tL) = \int_{\sn} h_L(v)\, dS(K,v),
\end{equation}
where $h_Q(v)=\max\{v\cdot x : x\in Q\}$ denotes the support function of the compact,
convex $Q\subset\rn$, while $v\cdot u$ is the inner product of $u$ and $v$ in $\rn$,
and where the linear combination $K+tL$, with $t>0$, is the convex body that's just
the usual vector sum, or equivalently whose support function is given by $h_{K+tL}=h_K+th_L$.
Thus the surface area measure may be viewed as the linearization of the volume functional.

Another fundamental geometric measure that has a clear geometric meaning is
{\it cone-volume measure} $V(K,\cdot)$ of a convex body $K$ that contains
the origin. It is a Borel measure on $\sn$ defined by
\begin{equation}\label{0.01}
	V(K,\eta) = \mathcal{H}^n(K\cap c_K(\eta)), \ \ \ \ \ \  \text{Borel}\ \eta \subset \sn,
\end{equation}
where
$c_K(\eta)$ is the cone of rays emanating from the
origin such that $\partial K \cap c_K(\eta) = \nu_K^{-1}(\eta)$.
Cone-volume measure can also be defined by using the support function and the surface area measure:
\begin{equation}\label{0.2}
	dV(K,\cdot) = \frac1n h_K \, dS(K,\cdot),
\end{equation}
a formula which resembles the elementary volume formula for a cone.

The total measure of cone-volume measure is volume
and the total measure of the surface area measure
is surface area; i.e.,
\begin{equation}\label{0.3}
	V(K,\sn) = V(K), \hspace{60pt} S(K,\sn) = S(K),
\end{equation}
where $S(K)=\hm(\partial K)$ denotes the surface area of $K$.

In the classical Brunn-Minkowski theory, the construction \eqref{0.1} is extended to
all quermassintegrals. After volume, the next important quermassintegral is surface area,
and we are led to the fact that for all convex bodies $K,L\subset\rn$,
\begin{equation}\label{0.4}
	\frac{d}{dt}\Big|_{t=0^+} S(K+tL) = (n-1) \int_{\sn} h_L(v)\, dS'(K,v),
\end{equation}
where $S'(K,\cdot)$ is a Borel measure on the unit sphere.\negthinspace\footnote{In the literature, the surface area measure
	$S(K,\cdot)$ is often called the  $(n-1)$-th area measure of $K$ and
	is written as $S_{n-1}(K,\cdot)$, while $S'(K,\cdot)$ is often denoted by $S_{n-2}(K,\cdot)$ and is
	called the $(n-2)$-th area measure of $K$.}
When the boundary $\partial K$
is $C^2$ and of positive curvature,
\begin{equation}\label{0.4.0}
	S'(K, \eta)  = \int_{\nu_K^{-1}(\eta)} H(z) \, d\hm(z),  \ \ \ \text{Borel}\  \eta\subset \sn,
\end{equation}
where $H(z)$ is the mean curvature of $\partial K$ at $z\in \partial K$.

The above geometric measures and their constructions are
fundamental in the Brunn-Minkowski theory of convex bodies. Their generalizations
to all the quermassintegrals
(other than volume and surface area)
were developed by Aleksandrov and Fenchel \& Jessen in the 1930s.
In the 1990s, in his landmark paper \cite{Je96}, Jerison gave a harmonic analysis construction for
the electrostatic (or Newtonian) capacity and solved the associated Minkowski problem.
Recently,
this basic construction was finally successfully mirrored for the dual quermassintegrals
within the dual Brunn-Minkowski theory \cite{HLYZ}
where the long-sought dual curvature measures were finally uncovered.

\vspace{5pt}

\noindent
{\bf Construction in the Dual Brunn-Minkowski Theory. }
For a convex body $K\subset\rn$, an interior point $z\in K$, and $q\in\R$,
the $q$-th dual quermassintegral $\wt V_q(K, z)$ of $K$ with respect to $z$ is
\begin{equation}\label{0.9.991}
	\wt V_q(K, z) = \frac1n \int_{\sn} \rho\lsub{K,z}(u)^q\, du,
\end{equation}
where $\rho\lsub{K,z}(u) = \max\{\lambda \ge 0 : z+\lambda u\in K\}$ is the radial
function of $K$ with respect to $z$, and $du$ denotes spherical Lebesgue integration. We will need to extend definition \eqref{0.9.991} for the case where $z\in\partial K$ when $q>-1$.
Note that $\wt V_n(K, z) =V(K)$, the volume of $K$, for each $z$.
For a continuous function $h:\sn\to (0,\infty)$, the {\it Wulff shape} of $h$ is just the convex body
\begin{equation}\label{0.9.992}
	[h]=\{x\in\rn: x\cdot u \le h(u)\ \text{for all $u\in\sn$}\},
\end{equation}
and $\wt V_q(h, z)$ will be written to indicate $\wt V_q([h], z)$, when $z$ is such that
the inner product $z\cdot u < h(u)$ for all $u\in\sn$.

Dual quermassintegrals are basic geometric invariants of a convex body,
which also have an analytical interpretation. Indeed, for $q>0$, there is the formula,
\begin{equation}\label{0.9.90}
	\wt V_q(K, z) = \frac qn \int_{\rn} \frac{{\pmb 1}_K(x)}{|x-z|^{n-q}}\, dx, \ \ \ \text{real $q> 0$,}
\end{equation}
which is the Riesz potential of the characteristic function of the convex body $K$.
A surprising (to the authors) connection between dual quermassintegrals and mean curvature will be described in the next subsection.

Calculating the variation of the dual quermassintegrals of an arbitrary convex body is not easily accomplished and was only completed recently in \cite{HLYZ}. The variation leads to the $q$-th dual curvature measure
$\wt C_q(K,z;\cdot)$, and can be defined as the unique Borel measure on $\sn$ that satisfies for all convex bodies, $L$,
\[
\frac{d}{dt}\Big|_{t=0^+} \wt V_q(K+tL,z)
= q \int_{\sn} \frac{h_{L,z}(v)}{h_{K,z}(v)}\, d\wt C_q(K,z;v), \ \ \  \text{real $q\neq 0$,}
\]
where $h_{Q,z}(v) = \max\{v \cdot (y-z) : y\in Q\}$
is the support function (at $v\in\sn$) of the convex body $Q$ with respect to $z\in Q$.

In the amazing duality in convex geometry between the Brunn-Minkowski theory and the dual Brunn-Minkowski theory
dual curvature measures are dual to Federer's curvature measures for convex bodies.
Cone volume measure $V(K,\cdot)$ is the dual curvature measure $\wt C_n(K,0,\cdot)$
with respect to the origin. Thus, dual curvature measures can also be viewed as a family
of geometric measures that generalizes cone volume measure. While cone volume measure is origin
dependent, surface area measure is translation invariant; i.e., $S(x+K,\cdot)=S(K,\cdot)$, for all $x\in\rn$.
By using dual quermassintegrals, we will construct a family of
translation invariant geometric measures $F_q(K,\cdot)$, called {\it chord measures},
which, among others, include both
the surface area measure $S(K,\cdot)$ and the area measure $S'(K,\cdot)$ as special
cases. Chord measures are naturally associated with translation invariant integral geometric invariants.

\vspace{5 pt}

\noindent
{\bf Construction in Integral Geometry. }
Let $G_{n,i}$ and $G_{n,i}^a$ denote respectively the Grassmannian of $i$-dimensional
subspaces and  $i$-dimensional affine subspaces of $\rn$. For $G_{n,1}^a$ we will write
$\mathscr{L}^n$. We assume that all of these are endowed with their standard invariant Haar measures.

For each convex body $K$, the set of all lines in  $\mathscr{L}^n$ that meet $K$ has
positive but finite Haar measure in $\mathscr{L}^n$ and can thus be normalized so this
set of lines becomes a probability space.   The chord integrals of $K$ are essentially
the moments of the random variable $|K\cap\cdot|$ on this probability space.

Elements of $\mathscr{L}^n$
(i.e., lines) are determined
by rotations and parallel translations. Haar measure on $\mathscr{L}^n$ will be normalized
to be a probability measure when restricted to rotations and to be $(n-1)$-dimensional
Lebesgue measure when restricted to parallel translations.

The {\it chord integral} $I_q(K)$ of the convex body $K\subset\rn$ is defined by
\begin{equation}\label{0.5}
	I_q(K) = \int_{\mathscr{L}^n}
	|K\cap \ell|^q\, d\ell, \ \ \ \ \ \text{real $q\ge  0$},
\end{equation}
where $|K\cap \ell|$ denotes the length of the chord $K\cap \ell$, and where the integration
is with respect to Haar measure on $\mathscr{L}^n$ as normalized above.
Chord integrals are basic integral-geometric invariants which arise in integral geometry.
As can be seen in \eqref{0.6}, the formulas below show that the chord integrals of an arbitrary
convex body $K\subset\rn$
include the body's volume and surface area as two important special cases:
\begin{equation}\label{0.6}
	I_1(K) =  V(K), \ \
	I_0(K)= \frac{\omega_{n-1}}{n\omega_n} S(K), \ \
	I_{n+1}(K) = \frac{n+1} {\omega_n} V(K)^2,
\end{equation}
where $\omega_n$ is the $n$-dimensional volume of the unit ball, $B^n$, in $\rn$.
These are Crofton's volume formula, Cauchy's integral formula for surface area, and
the Poincar\'e-Hadwiger formula, respectively (see \cite{Ren, San}).

For some integer values of the index $q$, chord integrals are not only integral geometric
invariants of lines, but also integral geometric invariants of planes; specifically,
\begin{equation}\label{0.7}
	\int_{G_{n,i}^a} \vv_i(K\cap \xi_i)^2\, d\xi_i = \frac{\omega_i}{i+1} I_{i+1}(K), \ \ \
	\int_{G_{n,i}^a} \vv_i(K\cap \xi_i)\, d\xi_i = I_1(K),
\end{equation}
(see \cite{Z99tams}).
Here $\vv_i$ is $i$-dimensional volume and the normalization of the Haar measure on $G_{n,i}^a$ is easily discerned from \eqref{0.7}.

A relationship between chord integrals and dual quermassintegrals is given
by,
\begin{equation}\label{0.81}
	I_q(K) = \frac{q}{\omega_n} \int_K \wt V_{q-1}(K,z) \, dz, \ \ \  \text{real $q> 0$},
\end{equation}
(see \cite{GZ98, Z99tams}). In light of \eqref{0.81} and \eqref{0.9.90}, we see that for $q>1$
the relationship between chord integrals and the Riesz potentials of
characteristic functions of convex bodies is given by
\begin{equation}\label{0.8}
	I_q(K) = \frac{q(q-1)}{n\omega_n} \int_{\rn}\int_{\rn}
	\frac{{\pmb 1}_K(x) {\pmb 1}_K(y)}{|x-y|^{n+1-q}}\,
	dxdy, \ \ \ \ \text{real $q> 1$,},
\end{equation}
(see \cite{Ren, San}).

One of the aims of this paper is to do for the functionals $I_q$ what was done for the volume functional $V$ in \eqref{0.0}-\eqref{0.4.0}, and the dual mixed volume functionals  $\wt V_q$ in \cite{HLYZ}.
For each real $q>0$, we shall define the geometric measures $F_q(K,\cdot)$, for each convex body $K$ in $\rn$, by
\begin{equation}\label{0.12}
	F_q(K,\eta) = \frac{2q}{\omega_n} \int_{\nu_K^{-1}(\eta)} \wt V_{q-1}(K, z)\, d\hm(z),
	\ \  \text{Borel $\eta\subset \sn$.}
\end{equation}
It will be shown that each $F_q(K,\cdot)$ is a finite Borel measure on $\sn$ that is the differential of $I_q$ at $K$; i.e.,
\begin{equation}\label{0.9}
	\frac{d}{dt}\Big|_{t=0^+} I_q(K+tL) = \int_{\sn} h_L(v)\, dF_q(K,v),
\end{equation}
for each convex body $L$.
We call the measure $F_q(K,\cdot)$ the $q$-th {\it chord measure} generated by $K$. These measures are invariant under translations of the body $K$.

For $q=1$ the measure $F_q$ is the classical surface area measure $S$; i.e., for each convex body $K\subset\rn$,
\[
F_1(K,\cdot) = S(K, \cdot).
\]
The extremal case $q\to 0$ is very interesting. A surprising connection between dual
quermassintegrals and mean curvature will be presented: For each smooth convex body $K$
and each $z\in \partial K$, it will be shown that
\[
\lim_{q \to 0} q \wt V_{q-1}(K,z) = \frac{(n-1)\omega_{n-1}}{2n} H(z),
\]
where $H(z)$ is the mean curvature of $\partial K$ at $z$. This, \eqref{0.12} and \eqref{0.4.0}
imply that for
a smooth convex body
the area measure $S'(K,\cdot)$ is a limiting case of chord measures,
\begin{equation}\label{0.13}
	\lim_{q\to 0^+} F_q(K, \eta) = \frac{(n-1)\omega_{n-1}}{n\omega_n} S'(K, \eta),
\end{equation}
for each Borel set $\eta \subset \sn$.
Thus, we shall define
\[
F_0(K,\cdot)= \frac{(n-1)\omega_{n-1}}{n\omega_n} S'(K, \cdot).
\]
However, it is not clear if for each convex body $K$ and Borel set $\eta \subset \sn$,
the function $q\mapsto F_q(K, \eta)$ is continuous at $0$.

For real $q\ge0$ and each convex body $K$ containing the origin (but not necessarily in its interior) the
$q$-th {\it cone-chord measure} generated by $K$ will be defined by:

\begin{equation}\label{0.10}
	dG_q(K,\cdot) =\frac1{n+q-1} h_K\, dF_q(K,\cdot).
\end{equation}
Note that $G_1(K,\cdot)$ is the cone volume measure $V(K,\cdot)$.
It will be shown that
\begin{equation}\label{0.11}
	I_q(K)= G_q(K,\sn)
	.
\end{equation}

\vspace{5pt}

\noindent
{\bf Minkowski Problems.} Once a new geometric measure associated with convex bodies is discovered,
its Minkowski problem becomes the immediate challenge:
given a measure, is it possible to construct the body that generates that given measure?
This has become a core area within
convex geometric analysis, a subject which is parallel to the study of isoperimetric
problems for geometric invariants. Both areas have many open problems to be tackled.

The problem of constructing a convex body by prescribing its surface area measure is
the {\it classical Minkowski problem}. Minkowski posed and studied the problem and was able
to fully solve the polytope case.
The general case was solved independently by Aleksandrov and Fenchel \& Jessen (see \cite{S14}).
The regularity of solutions to the problem has had a huge
impact on fully nonlinear partial differential equations. See for example Trudinger-Wang \cite{TW}.

The Minkowski problem for the area measure $S'(K,\cdot)$ is one of the
{\it Christoffel-Minkowski
	problems}. All but the simplest of these remain long standing, open and important problems
(see Guan-Ma \cite{GM03} for a regular case).

The Minkowski problem for cone-volume
measure is called the {\it logarithmic Minkowski problem}.
It was solved in \cite{BLYZ13jams} for symmetric convex bodies. The solution depends on the manner
in which cone volume measure can concentrate on subspaces.
The general case remains open. The uniqueness problem for cone-volume measure
is also open. The solution is closely related to the conjectured {\it log-Brunn-Minkowski inequality},
a conjectured inequality that
has attracted much attention (see e.g., \cite{BLYZ12adv, CHLL, KL, KM, Sar}).
Its connections with Gauss curvature
flows were studied in e.g.\ \cite{And99, And03, LSW, CHZ}.

The Minkowski problem for the dual curvature measures is called the {\it dual Minkowski problem}.
It includes the log-Minkowski problem as a special case.
See e.g.,\ \cite{BLYZZ1, HLYZ, HZ, LSW, Zhao17cvpde, Zhao17jdg, GHXY1, GHXY2} for recent work.

The Minkowski problem of prescribing chord measures
is:

\vspace{5pt}

\noindent
{\bf The Chord Minkowski Problem.} {\it
	Suppose real $q\ge0$ is fixed. If $\mu$ is a finite Borel measure on $\sn$,
	what are necessary and sufficient conditions on $\mu$ to guarantee the existence of a convex body
	$K\subset\rn$ that solves the equation,
	\begin{equation}\label{0.14}
		F_q(K,\cdot) = \mu\, ?
	\end{equation}
}

Two classical Minkowski problems are included as special cases. The case of $q=1$
is the classical Minkowski problem for surface area measure which was solved by
Aleksandrov and Fenchel \& Jessen, and the case of $q=0$ is
the long standing unsolved (when $n>3$) Christoffel-Minkowski problem for the area measure $S'$.

When the given measure $\mu$ has a density $\frac{\omega_n}{2q}f$ that is an integrable nonnegative function on $\sn$,
equation \eqref{0.14} becomes a new type of Monge-Amp\`ere type partial differential equation,
\begin{equation}\label{0.15}
	\det\big(\nabla_{ij}h + h\delta_{ij}) = \frac{f}{\wt V_{q-1}(h,\nabla h)}, \ \ \ \text{on $\sn$},
\end{equation}
where $h$ is the unknown function on $\sn$ which is extended via homogeneity to $\rn$, while $\nabla h$
is the Euclidean gradient of $h$ in $\rn$, the spherical Hessian of $h$ with respect to an orthonormal frame on $\sn$ is $(\nabla_{ij} h)$, and $\delta_{ij}$ is the Kronecker delta.
Note that as opposed to the usual Monge-Amp\`ere type equations, the geometric functional $\wt V_{q-1}(h, \nabla h)$ can not be written as a composition of $h$ and $\nabla h$ with a multivariable function $F(t, z)$, with $(t,z)\in \R\negthinspace\times\negthinspace\rn$.
\vspace{5pt}

The Minkowski problem prescribing cone-chord measures is:

\vspace{5 pt}

\noindent
{\bf The Chord Log-Minkowski Problem.} {\it Suppose real $q\ge0$ is fixed. If $\mu$ is a finite Borel measure on $\sn$,
	what are necessary and sufficient conditions on $\mu$ to guarantee the existence of a convex body
	$K\subset\rn$ containing the origin that solves the equation,
	\begin{equation}\label{0.16}
		G_q(K,\cdot) = \mu\, ?
	\end{equation}
}

When $q=1$, the chord log-Minkowski problem is just the log-Minkowski problem.
When $q=0$, it is a log-Christoffel-Minkowski problem, which may well be even more difficult
to solve than the still open Christoffel-Minkowski problem.

The partial differential equation associated with \eqref{0.16} is a new type of Monge-Amp\`ere equation on $\sn$,
\begin{equation}\label{0.17}
	\det\big(\nabla_{ij}h + h\delta_{ij})=\frac{f}{h\, \wt V_{q-1}(h, \nabla h)},
	\ \ \ \text{on $\sn$.}
\end{equation}

\vspace{5pt}

In this paper, we will solve the chord Minkowski problem for $q>0$.

\vspace{5pt}
\begin{theo}\label{CMP}
	Suppose real $q>0$ is fixed.
	If $\mu$ is a finite non-zero Borel measure on $\sn$, then there exists a convex body $K\subset\rn$ so that
	\[
	F_q(K,\cdot) = \mu,
	\]
	if and only if, $\mu$ is not concentrated on a
	closed hemisphere of $\sn$, and
	\[
	\int_{\sn} v\, d\mu(v) =0.
	\]
\end{theo}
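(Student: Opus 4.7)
Translation invariance of $F_q(K,\cdot)$, immediate from the integral representation \eqref{0.12} (both the Gauss map preimage and the dual quermassintegral $\widetilde V_{q-1}(K,z)$ are translation invariant in $K$), combined with the variational identity \eqref{0.9} applied to $L=\{x\}$, gives $\int_{\sn} v\cdot x\,dF_q(K,v)=0$ for all $x\in\rn$ and hence $\int_{\sn} v\,dF_q(K,v)=0$. For the hemisphere condition, \eqref{0.12} shows $F_q(K,\cdot)$ has the same support as $S(K,\cdot)$, and the classical Minkowski-type argument then shows concentration on a closed hemisphere forces $K$ to lie in a hyperplane, in which case $I_q(K)=0$ and so $F_q(K,\cdot)\equiv 0$, contradicting $\mu\neq 0$.

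\textbf{Sufficiency via variational minimization.} The plan is to solve
\[
\inf\bigl\{\Phi(h):h\in C(\sn),\ h>0,\ I_{q}([h])\geq 1\bigr\},\qquad \Phi(h):=\int_{\sn}h\,d\mu,
\]
where $[h]$ is the Wulff shape \eqref{0.9.992}. Since $h_{[h]}\le h$ with equality iff $h$ is already a support function and $[h_{[h]}]=[h]$, any minimizer is automatically a support function $h=h_{K_{0}}$. The two hypotheses enter as follows. The centroid condition yields $\Phi(h_{K+x})=\Phi(h_K)+x\cdot\int v\,d\mu=\Phi(h_K)$, giving full translation freedom. Non-concentration on any closed hemisphere yields, by compactness on $\sn$, a constant $c_0>0$ with $\int_{\sn}(v\cdot u)_{+}\,d\mu\geq c_0$ for every $u\in\sn$; for a minimizing sequence $K_j$ translated so $0\in K_j$, the bound $h_{K_j}(v)\ge(v\cdot x_j)_{+}$ for any $x_j\in K_j$ gives $\Phi(h_{K_j})\ge c_0|x_j|$, so $\mathrm{diam}(K_j)\leq 2\Phi(h_{K_j})/c_0$ is uniformly bounded. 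A direct slab estimate of $\int_{\mathscr L^n}|K\cap\ell|^q\,d\ell$ shows that $I_q(K)\to 0$ as a body of bounded diameter flattens, so the constraint $I_q\ge 1$ prevents the Blaschke-selected Hausdorff limit from being lower-dimensional; a minimizer $K_0$ with $I_q(K_0)=1$ therefore exists.

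\textbf{Euler--Lagrange and rescaling.} Perturbing $h_{K_{0}}\to h_{K_{0}}+tf$ for $f\in C(\sn)$ makes $\Phi$ linear, while the Wulff shape analogue of \eqref{0.9},
\[
\left.\frac{d}{dt}\right|_{t=0}I_{q}\bigl([h_{K_{0}}+tf]\bigr)=\int_{\sn}f\,dF_{q}(K_{0},\cdot),
\]
converts minimality at $K_{0}$ into the Lagrange multiplier identity $\int f\,d\mu=\lambda\int f\,dF_{q}(K_{0},\cdot)$ for all $f\in C(\sn)$, i.e.\ $\mu=\lambda F_{q}(K_{0},\cdot)$, with $\lambda>0$ because $\mu\neq 0$. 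From the degree $n+q-1$ homogeneity of $I_{q}$ and \eqref{0.9} one has $F_{q}(cK,\cdot)=c^{n+q-2}F_{q}(K,\cdot)$, and since $n+q-2>0$ whenever $q>0$ and $n\ge 2$, setting $c=\lambda^{-1/(n+q-2)}$ yields $F_q(cK_{0},\cdot)=\mu$, as required.

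\textbf{Principal obstacle.} The crux is the Wulff shape variational formula. In
\[
I_{q}([h_{K_0}+tf])=\frac{q}{\omega_{n}}\int_{[h_{K_0}+tf]}\widetilde V_{q-1}\bigl([h_{K_0}+tf],z\bigr)\,dz,
\]
both the domain of integration and the integrand vary with $t$, and $\widetilde V_{q-1}(K,z)$ must be controlled uniformly as $z$ approaches $\partial K_0$—the radial function $\rho_{K,z}$ degenerates in tangential directions there, which is exactly why the paper extends $\widetilde V_{q}(K,z)$ to $z\in\partial K$ for $q>-1$. Establishing the requisite pointwise bounds, continuity, and Reynolds-type differentiability of this extended dual quermassintegral in $K$ and $z$ simultaneously is where the bulk of the technical work will go; once this is in place, the variational scheme closes cleanly.
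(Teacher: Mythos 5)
Your proposal takes essentially the same route as the paper: a variational/optimization scheme with translation invariance from the centroid condition, compactness from the non-concentration condition, and an Euler--Lagrange identity that gives $\mu=\lambda F_q(K_0,\cdot)$ followed by rescaling via the degree-$(n+q-2)$ homogeneity of $F_q$. Your constrained minimization $\inf\{\int h\,d\mu : I_q([h])\ge 1\}$ is exactly the reformulation the paper itself reaches one step into its proof: the paper maximizes the scale-invariant functional $\Phi_{q,\mu}(K)=I_q(K)^{1/(n+q-1)}/\int h_K\,d\mu$ and then, after normalizing $I_q(K_i)=1$, converts this into minimizing $\int h_K\,d\mu$ over $\{I_q(K)=1\}$; your linear $\Phi(h)$ also realizes the same device the paper uses in Lemma~\ref{M1} to sidestep the non-differentiability of $t\mapsto\int h_{K_t}\,d\mu$.

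The one place your anticipated technical path diverges from what the paper actually does is in your ``principal obstacle'' paragraph. You expect to push the Wulff-shape variational formula through by controlling the extended dual quermassintegral $\wt V_{q-1}(K,z)$ uniformly as $z\to\partial K$ (where $\rho_{K,z}$ degenerates tangentially). The paper deliberately avoids exactly this: as emphasized at the start of its Section~5, the key idea is to abandon the radial-function representation and instead write $I_q$ via the $X$-ray function $X_K(x,u)$ over $K|u^\perp\times\sn$. The differential formula \eqref{v8} for $X_{K_t}(x,u)$ is clean, the pair of boundary-hitting points $y,y^-$ stays away from tangential degeneracy for a.e.\ $(x,u)$, and a generalized dominated convergence argument (Lemma~\ref{V2}, built on the homogeneity $I_q((1\pm c|t|)K,u)$) closes Theorem~\ref{V3} without ever needing uniform bounds on $\wt V_{q-1}(K,z)$ near $\partial K$. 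So your plan is structurally correct, but if you pursued the radial-function estimates you flag as the crux, you would find them substantially harder than the $X$-ray route the paper takes.
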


It will be interesting to explore the extreme case $q=0$ of the chord Minkowski problem by
letting $q\to 0$. This requires understanding the convergence of the solutions
$K_q$ as $q\to 0$ and
the convergence of the measures $F_q(K_q,\cdot)$
as $q\to 0$. Understanding this may shed light on the unsolved
Christoffel-Minkowski problem
for the area measure $S'$.

Similar to the situation with the log-Minkowski problem versus the classical Minkowski problem,
the chord log-Minkowski problem is a harder problem than the chord Minkowski problem.
We shall present sufficient conditions to solve the symmetric case when $1\le q\le n+1$.

\begin{theo}\label{CMP2}
	Suppose $1\le q\le n+1$.
	If $\mu$ is an even, finite, non-zero Borel measure on $S^{n-1}$, then
	there exists an origin-symmetric convex body $K\subset\rn$ such that
	\[
	G_q(K,\cdot) = \mu
	\]
	provided $\mu$ satisfies the subspace mass inequality:
	\begin{equation}
		\frac{\mu(\xi_k\cap S^{n-1})}{|\mu|}  <  \frac{k+\min\{k,q-1\}}{n+q-1},
	\end{equation}
	for for each $k$-dimensional subspace $\xi_k\subset\rn$ and each $k=1, \ldots, n-1$.
\end{theo}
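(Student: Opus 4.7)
I would follow the variational strategy used for the classical log-Minkowski problem in \cite{BLYZ13jams} and recover $K$ as the minimizer of the scale-invariant functional
\[
\Phi_q(h)=\int_{\sn}\log h\,d\mu-\frac{|\mu|}{n+q-1}\log I_q([h])
\]
on positive, even, continuous functions $h:\sn\to(0,\infty)$. Applying the variational formula \eqref{0.9} to the logarithmic Wulff-shape deformation $h_t=he^{tg}$ and combining with \eqref{0.10}--\eqref{0.11} yields
\[
\frac{d}{dt}\Big|_{t=0}\Phi_q(he^{tg})=\int_{\sn} g\,d\mu-\frac{|\mu|}{I_q([h])}\int_{\sn} g\,dG_q([h],\cdot),
\]
so every critical point $h_0$ satisfies $dG_q([h_0],\cdot)=(I_q([h_0])/|\mu|)\,d\mu$; a suitable dilation of $[h_0]$ then produces $K\in\kne$ with $G_q(K,\cdot)=\mu$. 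The theorem therefore reduces to the existence of a minimizer.

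Take a minimizing sequence $\{K_j\}\subset\kne$ normalized by $V(K_j)=1$. By John's theorem each $K_j$ is sandwiched between a symmetric ellipsoid $E_j$ (with semi-axes $a_{1,j}\ge\cdots\ge a_{n,j}>0$ along some orthonormal frame) and $\sqrt{n}\,E_j$. The continuity of $I_q$ (from \eqref{0.5}) and of $h\mapsto\int\log h\,d\mu$ on Hausdorff-convergent sequences with positive limiting volume, together with Blaschke's selection theorem, produces a minimizer in $\kne$ as soon as the ratios $a_{i,j}/a_{i+1,j}$ remain bounded. The main obstacle is therefore to rule out the alternative: after extracting a subsequence and rotating, $a_{k+1,j}/a_{k,j}\to 0$ for some $1\le k\le n-1$, i.e., $K_j$ concentrates near the $k$-dimensional subspace $\xi_k=\Span(e_1,\ldots,e_k)$.

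The heart of the proof is a sharp two-scale estimate for the chord integral under such degeneration. Writing $R=(a_{1,j}\cdots a_{k,j})^{1/k}$ and $\delta=(a_{k+1,j}\cdots a_{n,j})^{1/(n-k)}$ (so $R^k\delta^{n-k}\asymp 1$), the John sandwich reduces matters to the thin box $[-R,R]^k\times[-\delta,\delta]^{n-k}$, possibly after a further dyadic splitting of the $a_{i,j}$'s. Decomposing the Riesz-potential representation \eqref{0.8} of $I_q$ (and its analogues \eqref{0.81}, \eqref{0.12} when $q\le 1$) according to whether $x-y$ is dominated by its $\xi_k$- or $\xi_k^\perp$-component gives
\[
I_q(K_j)\;\asymp\;R^{(n+q-1)-\alpha_k}\,\delta^{\alpha_k},\qquad \alpha_k=(n-k)+\min\{n-k,\,q-1\},
\]
up to a polylogarithmic factor (present only at the borderline $q-1=n-k$). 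Combining with the elementary lower bound $h_{K_j}(v)\gtrsim R|P_{\xi_k}v|+\delta|P_{\xi_k^\perp}v|$ and splitting $\sn$ into a $(\delta/R)$-collar of $\xi_k^\perp\cap\sn$ and its complement yields
\[
\int_{\sn}\log h_{K_j}\,d\mu\ge(|\mu|-\mu(\xi_k^\perp\cap\sn))\log R+\mu(\xi_k^\perp\cap\sn)\log\delta+O(1).
\]
Plugging both estimates into $\Phi_q(K_j)$ and eliminating $\log R$ via the $I_q$-constraint yields, to leading order,
\[
\Phi_q(K_j)\;\gtrsim\;\frac{-\log\delta}{(n+q-1)-\alpha_k}\Bigl[\alpha_k|\mu|-(n+q-1)\,\mu(\xi_k^\perp\cap\sn)\Bigr]+O(1).
\]
Since $-\log\delta\to+\infty$ and $(n+q-1)-\alpha_k>0$, and since the hypothesis applied to the $(n-k)$-dimensional subspace $\xi_k^\perp$ gives $\mu(\xi_k^\perp\cap\sn)<\alpha_k|\mu|/(n+q-1)$, the bracket is strictly positive; hence $\Phi_q(K_j)\to+\infty$, contradicting minimality. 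The most delicate step is the sharp exponent $\alpha_k$ in the two-scale chord-integral asymptotic, which requires patching the short-range contribution (dominant when $q<n-k+1$) to the Riesz potential with the full-volume contribution (dominant when $q\ge n-k+1$) across the crossover $q=n-k+1$.
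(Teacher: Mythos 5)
Your variational framework is correct and is essentially the paper's, up to sign (you minimize $\Phi_q = -|\mu|\eop_\mu - \frac{|\mu|}{n+q-1}\log I_q$, the paper maximizes $\eop_\mu + \frac{1}{n+q-1}\log I_q$); the Euler–Lagrange step via the logarithmic Wulff deformation matches Theorem \ref{V3} and Lemma \ref{V3+}. The crux is therefore, as you identify, the compactness/non-degeneracy of the extremizing sequence, and here your argument has a genuine gap.

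Your two-scale reduction is not justified. You collapse the John-ellipsoid semi-axes into two groups of size $R$ and $\delta$ and appeal to "a further dyadic splitting" to reach a box $[-R,R]^k\times[-\delta,\delta]^{n-k}$, but when the $a_{i,j}$ are distributed across three or more scales (e.g.\ $n=3$, $a_{1,j}=j$, $a_{2,j}=1$, $a_{3,j}=1/j$, $q=2$), no single choice of $k$ produces a box that both contains the ellipsoid and yields the sharp exponent $\alpha_k$: the box $[-R,R]^k\times[-\delta,\delta]^{n-k}$ with your $R,\delta$ fails to contain $E_j$, while any containing box grossly overestimates $I_q(E_j)$. In that example the true behavior is $I_2(E_j)\lesssim 1/j$, recoverable only by tracking all three semi-axes, which the two-scale picture cannot. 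The paper handles this by combining two estimates that are both stated at the resolution of \emph{all} $n$ semi-axes: the entropy estimate of Lemma \ref{e-0}, obtained from the spherical partition $\{\Omega_{k,\delta}\}$ and the weighting Lemma \ref{lm4.1} with the $\sigma_k$'s tailored to $q$ (not to a geometric $k$), and the chord-integral estimate of Lemma \ref{e-2}, whose exponents $(2,\dots,2,q-m+1,1,\dots,1)$ with crossover at $m=\lfloor q\rfloor$ are arranged to cancel term-by-term with the entropy bound. The cancellation is what produces the surviving $t_0\log a_{1,l}\to-\infty$ term; there is no need to reduce to two scales.

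A second gap is the treatment of integer $q$. The constant $c_{q,m,n}$ in the paper's Lemma \ref{e-2} blows up as $q\to m$ or $q\to m+1$, so the estimate is only for non-integer $q$; the paper deals with integer $q$ separately in Lemma \ref{m-1} via the Jensen-type interpolation $I_r(K)\le c\,V(K)^{1-\frac{r-1}{s-1}}I_s(K)^{\frac{r-1}{s-1}}$ in \eqref{Iq>Im}, choosing $q'>q$ non-integer close enough that the residual coefficient still has the right sign. Your "polylogarithmic factor at $q-1=n-k$" addresses the wrong borderline and is not quantified; without a substitute for the Jensen trick, the endpoint cases $q\in\{1,\dots,n+1\}$ in the statement are not covered by your argument. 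Finally, note that the paper reduces $q=n+1$ to $q=1$ via $G_{n+1}(K,\cdot)=\frac{n+1}{\omega_n}V(K)G_1(K,\cdot)$ rather than by a direct estimate; this is a small but necessary remark since the estimates are only developed for $q\in[1,n+1)$.
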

The solution of the chord log-Minkowski problem for the case where $q=1$, when $\mu$ is an even measure,
was obtained in \cite{BLYZ13jams}.

We use variational methods to establish Theorems \ref{CMP} and \ref{CMP2}. One significant difficulty
is in establishing a differential formula for chord integrals that would allow the Minkowski and log-Minkowski
problems for chord and cone-chord measures to be tackled by solving optimization problems. Solving the
optimization problem associated with the chord Minkowski problem turns out to be fairly standard. However, solving
the optimization problem associated with the cone-chord Minkowski problem is not. This is because
cone-chord measures have delicate measure concentration properties, and these create substantial difficulties. Studying the concentration
of cone-chord measures is fundamental for understanding the behavior of such geometric measures.

An important problem that deserves attention, but not considered in this paper, is
the uniqueness problem for chord measures and cone-chord measures: {\it Are the solutions unique?}

\section{ Preliminaries}

Depending on the context, $|\cdot|$ may denote absolute value, or the standard norm on
Euclidean $n$-space, $\rn$, or the length of a line segment, and occasionally the total mass of a Borel measure on $\sn$.

Let $\mathcal C^n$ denote the collection of compact convex subsets of $\rn$. The collection of
{\it convex bodies}, denoted by $\mathcal K^n$, are the members of
$\mathcal C^n$ that have nonempty interiors.
The Hausdorff metric on $\mathcal C^n$ can be defined by
\[
d_\infty(K,L)=\max\{|h_K(v)-h_L(v)| : v\in \sn\}, \ \ \ K, L\in \mathcal C^n.
\]
The fact that the metric space $(\mathcal C^n,d_\infty)$ is locally compact is known as the
{\it Blaschke selection theorem.}
Let $\mathcal K^n_o$ be the collection of all convex bodies that contain the origin as an interior point,
and let $\mathcal K^n_e$ consists of the bodies in $\mathcal K^n_o$ that are symmetric about the origin.

For a subspace $\xi$ in $\rn$, let $P_\xi : \rn \to \xi$ be the  orthogonal projection onto $\xi$, and write
$K|\xi$ or $P_\xi K$ for the image of the convex set $K$ under $P_\xi$.
If $K\in\mathcal C^n$ is a subset of an  $i$-dimensional affine flat, for its $i$-dimensional volume we will write $\vv_i(K)$. If $i=n$ we write $V(K)$ and if $i=n-1$ we write $\vv(K)$.

Let $K\in\mathcal K^n$. For each $v\in \sn$, the hyperplane
\begin{equation*}
	H_K(v) = \{x \in \rn : x\cdot v = h_K(v)\}
\end{equation*}
is called the {\it supporting hyperplane to $K$ with unit normal $v$}.

For $\sigma\subset\partial K$, the {\it spherical image of $\sigma$} is defined by
\begin{equation*}
	\bu_K(\sigma) = \{v\in\sn : x\in H_K(v)\ \text{for some}\ x\in\sigma \} \subset\sn.
\end{equation*}
For $\eta\subset\sn$, the {\it reverse spherical image of $\eta$} is defined by
\[
\bx_K(\eta)=\{ x\in \partial K : x\in H_K(v) \text{ for some } v\in \eta\} \subset \partial K.
\]

Let $\sigma_K\subset \partial K$ be the set consisting of all $x\in \partial K$, for which the set
$\bu_K(\{x\})$, which we frequently abbreviate as $\bu_K(x)$, contains more than a single element.
It is well known that $\hm (\sigma_K) = 0$
(see Schneider \cite{S14}, p.\ 84).
The function
\begin{equation}\label{sim}
	\nu_K : \partial K \setminus \sigma_K \to \sn,
\end{equation}
defined by letting $\nu_K(x)$ be the unique element in $\bu_K(x)$, for each
$x \in \partial K \setminus \sigma_K $, is called the {\it spherical image map}
(also called the Gauss map) of $K$
and is known to be continuous (see Lemma 2.2.12 of Schneider \cite{S14}).
The set $\partial K \setminus \sigma_K$ is often abbreviated by
$\partial'\negthinspace K$.

\vspace{5pt}

\di{\bf Radial and Support Functions.}\
For $K\in\kn$  and $z\in\rn$, let
\[
S_z=S_z(K) =\{u\in \sn: K\cap (z+\mathbb Ru)\neq \varnothing \}.
\]
Note that the set $\{z+u: K\cap (z+\mathbb Ru)\neq \varnothing, \ u\in \rn \}$
is just the set of lines passing through $z$ and meeting $K$, and the intersection
of the translate (by $-z$) of this set with the unit sphere is $S_z$.  The set $S_z$ is
$\sn$ if $z\in K$ and is a the double cap on $\sn$  if $z \notin K$.

The (extended) radial function.
$\rho\lsub{K,z}: \rn \setminus\{0\} \to \mathbb R$,
of $K$ with respect to $z\in\rn$,  is defined for for, $x\in\rn\setminus\{0\}$, by
\begin{equation}\label{c1}
	\rho\lsub{K,z}(x) = \max \{\lambda\in\mathbb R : \lambda x \in K-z\}.
\end{equation}
If $\lambda x\notin K-z$, for all $\lambda\in\rbo$, then $\rho\lsub{K,z}(x)$ is defined to be 0.
Of course, the function $\rho\lsub{K,z}$
may assume negative values, but if $z\in\interior K$ then $\rho\lsub{K,z}$ is clearly positive.
Let
\[
S_z^+=S_z^+(K) = \{u\in \sn : \rho\lsub{K,z}(u) > 0\}, \ \ \  S_z^- =S_z^-(K)=\{u\in \sn : \rho\lsub{K,z}(u)  < 0\}.
\]
If $z\in K$, then obviously $S_z^-=\varnothing$.
Abbreviate $\rho\lsub{K,0}$ by $\rho\lsub{K}$.  For $\hm$-almost all $z\in \partial K$,
(in particular, for those $z\in \partial K$ that have only one outer normal) $\rho\lsub{K,z}$
is positive in an open half-space (orthogonal to the normal of $K$ at $z$) and vanishes in the
interior of the complement  half-space.
If $z\notin K$, the radial function $\rho\lsub{K,z}$ is positive in the interior of one of the two
halves of the double cone and is negative in the interior of the other half.

The radial function $\rho\lsub{K,z}$ is homogeneous of degree $-1$,
\[
\rho\lsub{K,z}(ax) =a^{-1} \rho\lsub{K,z}(x), \ \ \ \ \ a>0.
\]
Trivially the definition of the radial shows that
\begin{equation}\label{c1.0}
	\rho\lsub{K,y+z}= \rho\lsub{K-y,z}= \rho\lsub{K-y-z}, \ \ \ \ \ y, z \in \rn.
\end{equation}

The support function $h_{K,z}: \rn \to \R$ of $K$ with respect to $z$,
is defined by
\begin{equation}\label{c1.1}
	h_{K,z}(x) = \max\{x \cdot y: y\in K - z\}, \ \ \ x \in \rn.
\end{equation}
As a mapping $(z,x) \mapsto h_{K,z}(x)$ the support function is continuous.
For the support function of $K$ with respect to the origin we shall frequently write $h_K$ rather than $h_{K,0}$. Trivially,
\[
h_{K,y+z}(x)=h_{K-y,z}(x)= h_K(x) -(y+z)\cdot x, \ \ \ \ \ x, y, z \in \rn.
\]

From the definition of the radial function, we see that $z+\rho\lsub{K,z}(u)u \in \partial K$,
for each $u\in S_z$. The definition of the support function now shows that for all $v\in \sn$,
\begin{align*}
	v\cdot (z+\rho\lsub{K,z}(u)u)&\le h_K(v),
	\shortintertext{that is,}
	\rho\lsub{K,z}(u)u\cdot v&\le h_{K,z}(v),
\end{align*}
with equality if and only if $v\in \bu_K(y)$,
where $y=z+\rho\lsub{K,z}(u)u$. When equality occurs,
$\rho\lsub{K,z}(u)$ and $h_{K,z}(v)$ have the same sign, and thus $u\cdot v\ge 0$. Moreover, $u\cdot v>0$
if $u$ is not on the boundary of $S_z$.

Obviously,
\[
h_{K,z}(v) = \sup \big\{\rho\lsub{K,z}(u)u\cdot v : u\in S_z \big\},
\]
and
\begin{equation}\label{c1.1.1}
	\rho\lsub{K,z}(u) = \inf \Big\{\frac{h_{K,z}(v)}{u\cdot v} : u\cdot v>0, v\in \sn\Big\}.
\end{equation}
From the definition of radial function we see that that $(z,u)\mapsto\rho\lsub{K,z}(u)$ is continuous on the set
\[
K_{\text{icone}}=\{(z,u)\in \rn\times\sn : (\text{int} K)\cap (z+\R u)\neq \varnothing\}.
\]
In particular, $(z,u)\mapsto\rho\lsub{K,z}(u)$ is continuous on int$K \times \sn$.
At a point $(z,u)$ at which the line $z+\R u$ intersects only the boundary $\partial K$,
the radial function may not be continuous.

\vspace{3pt}

The function $\partial K \times \sn \ni (z,u) \mapsto \rho\lsub{K,z}(u)$
is upper semi-continuous on
$\partial K \times \sn$.
To see this, note that for $c > 0$,
\begin{align*}
	\{(z, u) &\in \partial K \times \sn : \rho\lsub{K,z}(u) \ge c \} \\
	&=\Big\{(z, u) \in \partial K \times \sn :
	|w-z|\ge c \text{ and } \frac{w-z}{|w-z|}=u \text{ for some } w\in \partial K\Big\}.
\end{align*}
This is a closed set because $\partial K$ is compact. When $c=0$, the set on the left-hand side of the equation
is obviously $\partial K \times \sn$.

Since $\partial K \times \sn \ni (z,u) \mapsto\rho\lsub{K,z}(u)$ is upper semi-continuous, it is a Borel function on $\partial K \times \sn$ and the set
\[
D^+ =\{(z,u) \in \partial K \times \sn : \rho\lsub{K,z}(u)>0\},
\]
is a Borel set in $\partial K \times \sn$.

\vspace{5pt}

\di{\bf Dual Quermassintegrals.}\

For $q\in \mathbb R$, define
\begin{equation}\label{2def}
	\wt V_q^+ (K,z)=\frac1n \int_{S_z^+} \rho\lsub{K,z}(u)^q\, du, \ \ \ \ \ \
	\wt V_q^- (K,z)=\frac1n \int_{S_z^-} |\rho\lsub{K,z}(u)|^q\, du.
\end{equation}

When $z\in K$, then $S_z^-=\varnothing$ and $\wt V_q^-(K,z)=0$.

For $q\in \mathbb R$, define
the $q$-th {\it dual quermassintegral}, $\wt V_q (K,z)$, of $K$ with respect to $z\in K$, by,
\begin{equation}\label{c2}
	\wt V_q (K,z) = \wt V_q^+ (K,z).
\end{equation}
For $q>0$, and $z\notin K$ define $\wt V_q (K,z)$ by
\begin{equation}\label{c2.0}
	\wt V_q (K,z) = \wt V_q^+ (K,z) - \wt V_q^- (K,z).
\end{equation}

When $z=0$
the $q$-th dual quermassintegral with respect to the origin is usually written as $\wt V_q(K)$.
Note that this is for all $q\in\R$.

When $q=0$, we have
\begin{equation}\label{c2.2}
	\wt V_0(K,z)= \begin{cases} \omega_n  &z\in K\setminus \partial K \\
		{\omega_n}/2  &\text{for $\hm$-almost all $z\in\partial K$}  \\  0  &z\notin K. \end{cases}
\end{equation}

When $q=n$, the dual quermassintegral $\wt V_n(K,z) =V(K)$ for all $z\in \rn$.

\vspace{3pt}

Since $S_z^+=\sn$ when $z\in \Int K$ and $\rho\lsub{K,z}(u)=0$ when $z\in \partial K$ and $u$ is
in the interior of $\sn \setminus S_z^+$, we have
\begin{equation}\label{c2.1}
	\wt V_q(K,z) = \frac1n \int_{\sn} \rho\lsub{K,z}(u)^q\, du,
\end{equation}
when either $q\in\R$ and $z\in \Int K$, or $q>0$ and $z\in \partial K$.

When $q\in \mathbb R$,
the dual quermassintegral
$\wt V_q(K,\cdot)$ is continuous
on int$K$, but it may be infinite if $z\in \partial K$ and $q$ is negative.
In fact, for $q>0$,
the dual quermassintegral
$\wt V_q(K,\cdot)$ is continuous on $K$.
This follows from \eqref{c2.1}, the bounded convergence theorem, and the
continuity of function $K\ni z \mapsto \rho\lsub{K,z}(u)$ for any fixed $u$.
For $q\le0$, the
dual quermassintegral
$\wt V_q(K,\cdot)$ is lower semi-continuous on $K$.
This follows from \eqref{2def}, Fatou's lemma, the
continuity of function $K\ni z \mapsto\rho\lsub{K,z}(u)$ for any fixed $u$, and
the inclusion $S_z^+ \subseteq \liminf_{k\to\infty} S_{z_k}^+$ for any
sequence $z_k \to z$ in $K$. In particular, for $q\in \R$, the
dual quermassintegral
$\wt V_q(K,\cdot)$ is a Borel function on $\partial K$.

For $q>0$ and fixed $z\in\rn$, it can also be shown that
$\wt V_q(\cdot,z)$ is a continuous function on $\kn$.

When $q>0$, the dual quermassintegral is the Riesz potential of the characteristic function
$\mathbf{1}_K$,
\begin{equation}\label{c2+}
	\wt V_q (K,z) = \frac q{n}\int_{K}|x-z|^{q-n}dx,\ \ \ \ \ q>0,
\end{equation}
which is easily obtained by switching to polar coordinates.

When the index $q$ is an integer between $1$ and $n$, the dual quermassintegrals have the well-known integral representation as means of areas (i.e., lower dimensional volumes) of cross sections through a point,
\begin{equation}\label{c2.3}
	\wt V_i(K,z) =\frac{\omega_n}{\omega_i} \int_{G_{n,i}} \vv_i(K\cap(z+\xi_i))\, d\xi_i, \ \ \ \ \ i=1,2, \ldots, n
\end{equation}
where $\wt V_n(K,z)$ is just $n$-dimensional volume $V(K)$. If in the right-hand side of \eqref{c2.3} intersections with subspaces are replaced by projections onto subspaces, the left-hand becomes the classical quermassintegral.

\di{\bf Spherical Image, Surface Area Measure, and Cone Volume Measure.}\

From the fact that $S(K, \eta) =\mathcal H^{n-1}(\nu_K^{-1}(\eta))$ for each Borel $\eta \subset \sn$, it follows that
for each continuous $g:\sn\to \R$,
\begin{equation}\label{xsm1}
	\int_{\partial'\negthinspace K} g(\nu_K(x))\, d\hm(x) = \int_{\sn} g(v)\,dS(K,v).
\end{equation}

The cone volume measure $V(K, \cdot)$ of a convex body $K\in \kno$ is easily defined by using the spherical image map
$\nu_K:\partial'\negthinspace K \to\sn$.
For each Borel set $\eta \subset \sn$,
\begin{equation}\label{cm}
	V(K, \eta) = \frac1n \int_{\nu_K^{-1}(\eta)} (x\cdot \nu_K(x))\, d\mathcal H^{n-1}(x).
\end{equation}
From this definition and \eqref{xsm1} it is easily shown that,
\begin{equation}\label{cm1}
	dV(K, \cdot) = \frac1n h_K \, dS(K,\cdot).
\end{equation}
Cone volume has received increasing attention in the recent past. See e.g., \cite{BH16adv, BH17adv,  BLYZ13jams, CLZ, N07tams, NR03aipps, PW12plms, Sta1, Sta2, X1}.

\vspace{5pt}
\di{\bf The Brunn-Minkowski Inequality.}\
For $K, L\in \cn$ and $t\ge 0$, the Minkowski combination $K+tL\in \mathcal C^n$ can be defined by
\[
K+tL=\{x+ty : x\in K,\ y\in L\}.
\]
The Minkowski combination $K+tL$ has a support function given by
$h_{K+tL} = h_K + th_L$.

A real-valued function $\Phi$ on $\mathcal C^n$, or one of its subsets, is said to be homogeneous of degree $\lambda\in\rbo$ if $\Phi(tK)=t^\lambda\Phi(K)$ for all $K$ in its domain and all $t> 0$.

The volume functional, $V:\in \mathcal C^n \to [0,\infty)$, is not only homogeneous of degree $n$ but it satisfies the {\it Brunn-Minkowski inequality}: For $K,L\in \mathcal C^n$ and $t\ge 0$,
\[
V(K+tL)^{1/n} \ge V(K)^{1/n} + tV(L)^{1/n}.
\]

\section{ Chord Integrals}

Throughout this section we assume $K\in\kn$ is an arbitrary but fixed convex body. For quick reference we repeat, and slightly extend, the definition of the chord-integral,
\begin{equation}\label{c4}
	I_q(K) = \int_{K\cap \ell\neq\varnothing}
	|K\cap \ell|^q\, d\ell, \ \ \ \ \ \text{$q>-1$},
\end{equation}
where the integration is
over all $\ell\in\Ln=G^a_{n,1}$ that meet $K$ and is
with respect to the invariant Haar measure on $\Ln$.
It is easily seen (by imitating  \eqref{c6.4} and \eqref{c6.5} below) that, with respect to the invariant measure on $\Ln$, for almost all $\ell$
such that $K\cap \ell\neq\varnothing$ it is the case that $|K\cap \ell|>0$.

The measure in \eqref{c4} is normalized so that
it is a probability measure when restricted to rotations. More specifically, since a line $\ell\in\Ln$ is uniquely
associated with an antipodal pair $u,-u\in\sn$ and the intersection point $x=u^\perp \cap \ell$, where
$u^\perp$ is the $(n-1)$-dimensional subspace orthogonal to $u$ (i.e.,
$\ell = x+ \mathbb R u$)
the measure $d\ell$ is normalized so that
\begin{equation}\label{c3}
	d\ell = \frac1{n\omega_n} dx du,
\end{equation}
where
$d\ell$ is the measure on $\Ln$, while
$dx$ is Lebesgue measure in $u^\perp$ and $du$ is spherical Lebesgue measure
on $\sn$. Note that the normalization of our measure is such that for the unit ball $I_1(B^n)=V(B^n)=\omega_n$.

Thus the chord integral $I_q(K)$ is given by
\begin{equation}\label{c6.1}
	I_q(K) = \frac1{n\omega_n} \int_{\sn} \int_{K|u^\perp}
	X_{K}(x,u)^q\, dxdu, \ \ \ \ \ q>-1,
\end{equation}
where $K| u^\perp$ is the image of the orthogonal projection of $K$ onto $u^\perp$, and where
the {\it parallel $X$-ray} of $K$ is the non-negative function on
$\rn\times\sn$ defined by
\begin{equation}\label{c6}
	X_{K}(z,u) = |K\cap (z+\mathbb Ru)|, \ \ \ \ \ \ z \in \rn, \ u\in \sn.
\end{equation}

The $X$-ray $X_{K}(x,u)$ and the extended radial function $\rho\lsub{K,z}(u)$ have the following relation,
\begin{equation}\label{c6.2}
	X_{K}(x,u) = \rho\lsub{K,z}(u) + \rho\lsub{K,z}(-u), \ \  \text{when} \
	K\cap (x+\mathbb R u) = K\cap (z+\mathbb R u) \neq \varnothing.
\end{equation}
When $z\in \partial K$, then either $\rho\lsub{K,z}(u)=0$ or $\rho\lsub{K,z}(-u)=0$ for almost all
$u\in \sn$, and thus
\begin{equation}\label{c6.6}
	X_{K}(z,u) = \rho\lsub{K,z}(u), \ \ \text {or }\ \ X_{K}(z,u) = \rho\lsub{K,z}(-u), \ \ \ \ z\in \partial K,
\end{equation}
for almost all  $u\in \sn$.

It will be convenient to define $I_q(K, u)$, for $u\in\sn$ as
\begin{equation}\label{c6.4}
	I_q(K, u) =  \int_{K|u^\perp} X_{K}(x,u)^q\, dx,
\end{equation}
so that
\begin{equation}\label{c6.5}
	I_q(K) =\frac1{n\omega_n} \int_{\sn} I_q(K,u)\, du.
\end{equation}

\begin{lemm}\label{I11}
	For each $q>-1$ and each $K\in\kn$, the chord-power integral
	$I_q(K)$ is finite.
\end{lemm}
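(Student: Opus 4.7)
The plan is to split the argument at $q=0$. For $q\ge 0$ the boundedness is immediate: since $K$ is bounded, the $X$-ray $X_K(\cdot,u)$ is bounded by $\operatorname{diam}(K)$, and each projection $K|u^\perp$ has $(n-1)$-dimensional volume at most $\omega_{n-1}R^{n-1}$ where $R$ is the circumradius of $K$. Plugging these bounds into \eqref{c6.4} and \eqref{c6.5} immediately gives $I_q(K)<\infty$. So the content of the lemma is in the range $-1<q<0$.

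For this range I would fix $u\in\sn$, set $D=K|u^\perp$, and write $f(x)=X_K(x,u)$. The first key observation is that $f$ is a non-negative concave function on the convex body $D$: if $x_i=\lambda_i z_i$ with $z_i\in K\cap(x_i+\R u)$, then a convex combination lies in $K\cap(x+\R u)$, where $x$ is the same convex combination of the $x_i$. Next, since $K\in\kn$, choose an inscribed ball $B(x_0,r)\subset K$; its projection onto $u^\perp$ gives a point $x_0^u$ with $f(x_0^u)\ge 2r$ for every direction $u$. By concavity,
\[
f\bigl((1-\lambda)x_0^u+\lambda x\bigr)\ge (1-\lambda)\cdot 2r,\qquad x\in D,\ \lambda\in[0,1],
\]
so taking $\lambda=1-s/(2r)$ shows $\{f\ge s\}\supseteq \frac{s}{2r}x_0^u+\bigl(1-\tfrac{s}{2r}\bigr)D$ for $0<s\le 2r$, which yields the uniform bound
\[
\bigl|\{x\in D:f(x)<s\}\bigr|\le \frac{(n-1)\omega_{n-1}R^{n-1}}{2r}\,s,\qquad 0<s\le 2r,
\]
with the right-hand side independent of $u$.

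The next step is the layer-cake representation. Since $q<0$, a change of variable gives
\[
\int_D f(x)^q\,dx=|q|\int_0^\infty s^{q-1}\bigl|\{x\in D:f(x)<s\}\bigr|\,ds.
\]
Splitting the integral at $s=2r$ and inserting the estimates above, the small-$s$ piece is controlled by $\int_0^{2r}s^q\,ds$, which converges precisely because $q>-1$, and the large-$s$ piece is controlled by $\int_{2r}^\infty s^{q-1}\,ds$, which converges because $q<0$. Both estimates are uniform in $u$, so from \eqref{c6.5} one gets $I_q(K)<\infty$.

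The main (minor) obstacle is only ensuring the uniformity in $u$ of the distribution-function estimate; this is handled cleanly by using a genuinely inscribed ball in $\R^n$, so that the parameters $r$ and $R$ controlling the argument depend only on $K$ and not on the direction. Apart from this, the argument is routine, combining concavity of the $X$-ray with the layer-cake formula.
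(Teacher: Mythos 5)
Your proof is correct, and it takes a genuinely different route from the paper's. Where you exploit concavity of the $X$-ray $X_K(\cdot,u)$ on $K|u^\perp$ directly, via an inscribed ball and a level-set (layer-cake) estimate, the paper first replaces $K$ by its Steiner symmetral $S_uK$ (using $X_{S_uK}(\cdot,u)=X_K(\cdot,u)$), then minorizes $X_{S_uK}$ by the $X$-ray of the double cone spanned by $K|u^\perp$ and $S_uK\cap\mathbb Ru$, and finally integrates in polar coordinates arriving at a beta integral $\int_0^1(1-t)^q t^{n-2}\,dt$, which converges exactly for $q>-1$. The two arguments rest on the same geometric fact — the chord length cannot decay linearly toward $\partial(K|u^\perp)$ any faster than the double-cone rate determined by an inradius and circumradius — but your version sidesteps Steiner symmetrization entirely and makes the role of the exponent $q>-1$ transparent via the distribution-function bound $|\{X_K(\cdot,u)<s\}|\lesssim s$, which may be pedagogically cleaner. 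The paper's version has the mild advantage of producing an explicit closed-form constant through the beta integral, but this extra precision is not needed for the lemma. One small point worth making explicit in your write-up: the passage from $\{f\ge s\}\supseteq \frac{s}{2r}x_0^u+\bigl(1-\frac{s}{2r}\bigr)D$ to the linear-in-$s$ volume bound uses $1-(1-t)^{n-1}\le(n-1)t$, which is where the factor $n-1$ in your constant comes from; with that spelled out the argument is complete.
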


\begin{proof} Obviously, $I_q(K)$ is finite when $q\ge 0$.
	For the case of $-1<q<0$, without loss of generality we assume
	that the origin is in the interior of $K$ and let
	$b={\rho\llsub{\text{i}}}/{\rho\llsub{\text{o}}}$, where $\rho\lsub{\text{i}}$ and $\rho\lsub{\text{o}}$
	are the minimum and the maximum of $\rhok$. For fixed   $u\in\sn$ consider the Steiner
	symmetrization $S_uK$ of $K$ with respect to $u^\perp$ and observe that
	$X_{S_uK}(x,u)= X_{K}(x,u)$, for all
	$x\in K|u^\perp$. Note that since $\rho\lsub{\text{i}}B \subset K \subset \rho\lsub{\text{o}}B$,
	where $B=B^n$ the unit ball centered at the origin,
	$\rho\lsub{\text{i}}B \subset S_uK \subset \rho\lsub{\text{o}}B$.
	Let $C_uK$ denote the double cone
	inside $S_uK$ spanned by $K|u^\perp$ and $S_uK\cap (\mathbb R u)$. Then for all $x\in K|u^\perp$,
	\begin{multline*}
	X_{K}(x,u)
	= X_{S_uK}(x,u)
	\ge X_{C_uK}(x,u)\\
	=
	(\rho\lsub{S_uK}(\langle x \rangle) - |x|)
	\frac{X_{S_uK}(x,u)}{\rho\lsub{S_uK}(\langle x \rangle)}
	\ge
	(\rho\lsub{S_uK}(\langle x \rangle) - |x|)\,
	\frac{2\rho\lsub{\text{i}}}{\rho\lsub{\text{o}}},
	\end{multline*}
	where $\langle x \rangle$ is used in place of $x/|x|$.

	By using \eqref{c6.1}, and switching to polar coordinates for the inner integral by
	writing $x\in K|u^\perp$ as $x=rv$ with $0\le r\le \rho\lsub{S_uK}(v)$, and $v\in S^{n-1}\cap u^\perp$,
	we have
	\begin{align*}
		I_q(K) &= \frac1{n\omega_n} \int_{\sn} \int_{K|u^\perp} X_{K}(x,u)^q\, dxdu \\
		&\le \frac{2^qb^q}{n\omega_n} \int_{\sn} \int_{S^{n-1}\cap u^\perp}\int_0^{\rho\llsub{S_uK}\hspace{-1pt}(v)}
		(\rho\lsub{S_uK}(v)-r)^q r^{n-2}drdvdu \\
		&= \frac{2^qb^q}{n\omega_n}\int_{\sn} \int_{S^{n-1}\cap u^\perp} \rho\lsub{S_uK}(v)^{n+q-1} dvdu
		\int_0^1 (1-t)^q t^{n-2}dt\\
		&<\ \infty.
	\end{align*}
\end{proof}

An important property of the functional $I_q$ is its homogeneity.

\begin{lemm}\label{Lud03, }
	If  $K\in \kn$ and $q>-1$, then
	\begin{equation}\label{homog}
		I_q(tK) = t^{n+q-1}I_q(K),
	\end{equation}
	for $t>0$.
\end{lemm}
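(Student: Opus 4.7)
The plan is to prove the homogeneity identity directly from the integral representation \eqref{c6.1}, by performing the natural change of variables induced by the scaling $K \mapsto tK$. Since both the outer integration variable $u \in \sn$ and the integrand's dependence on $K$ are transparent, the argument reduces to tracking how the $X$-ray and the projection transform under dilation.

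First I would fix $u \in \sn$ and observe two elementary identities. Under the map $x \mapsto tx$, the projection scales as $tK \mid u^\perp = t(K \mid u^\perp)$, so the change of variable $x = ty$ in the inner integral contributes a Jacobian factor $t^{n-1}$ (since $u^\perp$ is $(n-1)$-dimensional). For the integrand, the line $ty + \mathbb{R}u$ meets $tK$ in exactly $t$ times the segment in which $y + \mathbb{R}u$ meets $K$, which gives
\[
X_{tK}(ty, u) \;=\; |\,tK \cap (ty + \mathbb{R}u)\,| \;=\; t\,|\,K \cap (y + \mathbb{R}u)\,| \;=\; t\, X_K(y, u).
\]
Raising to the $q$-th power produces a factor $t^q$.

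Combining these, the inner integral satisfies $I_q(tK, u) = t^{n-1+q} I_q(K, u)$, and then \eqref{c6.5} yields $I_q(tK) = t^{n+q-1} I_q(K)$ after integrating over $\sn$. Lemma~\ref{I11} ensures finiteness throughout, so there is no integrability issue in performing the change of variables for $q > -1$. I do not anticipate any real obstacle here; this is a routine scaling computation and the only thing to be careful about is remembering that the Jacobian in the projection integral is $t^{n-1}$ rather than $t^n$, and that the chord length itself supplies the extra factor $t^q$.
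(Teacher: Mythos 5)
Your proof is correct, and it is the natural argument via the integral representation \eqref{c6.1}: the Jacobian $t^{n-1}$ from scaling the $(n-1)$-dimensional projection variable, together with the factor $t^q$ from $X_{tK}(ty,u)=tX_K(y,u)$, gives the degree $n+q-1$. The paper states this lemma without proof, treating it as routine, so there is no competing argument to compare against; but your derivation via $I_q(\cdot,u)$ matches precisely how the paper later invokes the homogeneity (e.g.\ in the proof of Lemma \ref{V2}, where it explicitly uses $I_q((1\pm c|t|)K,u)=(1\pm c|t|)^{n+q-1}I_q(K,u)$). One small remark: since the integrand is nonnegative, the change-of-variables identity holds irrespective of finiteness, so the appeal to Lemma \ref{I11} is not actually needed for the computation itself, though it is of course needed to interpret the conclusion as an equality of finite real numbers.
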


Formulas in the following lemma are known but with different normalization.
The first formula was proved in \cite{Z99tams} (Lemma 3.1), see also, see \cite{GZ98} (Lemma 2.1).
The second formula is classical, see \cite{Ren, San}.
For completeness, we include a proof here.

\begin{lemm}\label{I1}
	If $K\in \kn$, then
	\begin{align}\label{c6.3}
		I_q(K) &= \frac{q}{\omega_n} \int_K \wt V_{q-1}(K,z) \, dz, \ \  q>0, \\
		I_q(K) &= \frac{q(q-1)}{n\omega_n}
		\int_{x\in K} \int_{z\in K}
		|x-z|^{q-n-1}dxdz, \ \ q>1.  \label{c6.3.1}
	\end{align}
\end{lemm}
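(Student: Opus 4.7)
The plan is to establish the first identity by a Fubini/change-of-variables argument that rewrites the $X$-ray representation \eqref{c6.1} as an integral over $K$ of radial functions, and then to deduce the second identity from the first by inserting the Riesz potential formula \eqref{c2+}.

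For the first formula, I will start from the definition
\[
I_q(K) = \frac{1}{n\omega_n}\int_{\sn}\int_{K|u^\perp} X_K(x,u)^q\, dx\, du.
\]
Fix $u\in\sn$ and write each $z\in K$ uniquely as $z=y+tu$ with $y\in K|u^\perp$ and $t\in[a(y,u),b(y,u)]$, where $a(y,u)$ and $b(y,u)$ are the two endpoints of the chord $K\cap(y+\mathbb{R}u)$, so that $X_K(y,u)=b(y,u)-a(y,u)$ and $\rho\lsub{K,z}(u)=b(y,u)-t$. The key one-dimensional identity is
\[
\int_{a(y,u)}^{b(y,u)} (b(y,u)-t)^{q-1}\, dt \;=\; \frac{X_K(y,u)^q}{q}, \qquad q>0,
\]
which is valid (and finite) even for $0<q<1$ despite the boundary singularity of the integrand. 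Multiplying through by $1/n$, integrating in $y\in K|u^\perp$, and then in $u\in\sn$, Fubini gives
\[
\frac{1}{n}\int_{\sn}\int_{K} \rho\lsub{K,z}(u)^{q-1}\, dz\, du
\;=\;\frac{1}{nq}\int_{\sn}\int_{K|u^\perp} X_K(y,u)^q\, dy\, du
\;=\;\frac{\omega_n}{q}\,I_q(K).
\]
Swapping the order on the left and recalling that for $z\in\Int K$ one has $S_z^+=\sn$, so that the spherical integral equals $n\,\wt V_{q-1}(K,z)$ by \eqref{c2.1}, one obtains
\[
I_q(K) \;=\; \frac{q}{\omega_n}\int_{K} \wt V_{q-1}(K,z)\, dz,
\]
which is \eqref{c6.3}. (Since $\partial K$ has Lebesgue measure zero, it is irrelevant whether we extend $\wt V_{q-1}(K,\cdot)$ to $\partial K$ via \eqref{c2.0} or not.)

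For the second formula, assume $q>1$ so that $q-1>0$. Then \eqref{c2+} applied with $q-1$ in place of $q$ reads
\[
\wt V_{q-1}(K,z) \;=\; \frac{q-1}{n}\int_{K} |x-z|^{q-1-n}\, dx.
\]
Substituting this into the first formula and invoking Fubini (the double integrand is nonnegative, and finiteness of the result is guaranteed by Lemma \ref{I11}) yields
\[
I_q(K) \;=\; \frac{q(q-1)}{n\omega_n}\int_{K}\int_{K} |x-z|^{q-n-1}\, dx\, dz,
\]
which is \eqref{c6.3.1}.

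The main technical point to be careful about is the Fubini step in the first part when $0<q<1$, where $\rho\lsub{K,z}(u)^{q-1}$ is unbounded near $\partial K$; one must verify that the slicing $z=y+tu$ keeps the inner integral finite uniformly in a sense that justifies the exchange of order, which reduces to the finiteness of $I_q(K)$ already established in Lemma \ref{I11}. Everything else is bookkeeping: rewriting the chord length as a sum of two opposite radial functions, noting that $\{z\in K:\rho\lsub{K,z}(u)=0\}$ has Lebesgue measure zero for each $u$, and using the uniform positivity/boundedness of the Jacobian of the slicing $(z)\leftrightarrow(y,t)$.
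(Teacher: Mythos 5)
Your proposal is correct and follows essentially the same route as the paper's proof: slice $K$ along lines parallel to $u$, write the one-dimensional identity $X_K(x,u)^q = q\int (b-t)^{q-1}\,dt$ over the chord, identify $b-t$ with $\rho\lsub{K,z}(u)$ at $z=x+tu$, apply Fubini and the polar representation \eqref{c2.1} of $\wt V_{q-1}$ to get \eqref{c6.3}, and then insert the Riesz potential formula \eqref{c2+} to obtain \eqref{c6.3.1}. The paper parametrizes the chord by $s\in[s_0,s_1]$ and passes through $(s-s_0)^{q-1}$ before flipping to $(s_1-s)^{q-1}$, but that is only a cosmetic difference; your remark that the boundary $\partial K$ is Lebesgue-negligible (so the behavior of $\wt V_{q-1}(K,\cdot)$ there is immaterial) is a fair observation that the paper leaves implicit.
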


\begin{proof}
	Suppose $u\in \sn$ and $x\in K | u^\perp$. For $z\in K\cap(x+\rbo u)$, define $s(z)=s\in\rbo$ by $z=x+su$ and let $s_0, s_1$ be the minimum and maximum of $s$. The inverse map $s^{-1}: [s_0,s_1] \to K\cap (x+\rbo u)$ (where $s\mapsto x+su$) is obviously a bijection.
	Observe that $X_K(x,u) = s_1-s_0$ while $s_1 - s(z)= \rho\lsub{K,z}(u)$.
	Then by \eqref{c6.1} and \eqref{c2},
	we have
	\begin{align*}
		I_q(K) &= \frac1{n\omega_n} \int_{\sn} \int_{u^\perp} X_K(x,u)^q\, dxdu \\
		&=\frac q{n\omega_n}\int_{\sn}\int_{K|u^\perp} \int_{s_0}^{s_1} (s-s_0)^{q-1}\, ds dxdu \\
		&=\frac q{n\omega_n}\int_{\sn}\int_{K|u^\perp} \int_{s_0}^{s_1} (s_1-s)^{q-1}\, ds dxdu \\
		&=\frac q{n\omega_n}\int_{\sn}\int_K \rho\lsub{K,z}(u)^{q-1} \, dz du \\
		&=\frac {q}{\omega_n} \int_K \wt V_{q-1}(K,z) \, dz.
	\end{align*}
	This gives \eqref{c6.3} which together with \eqref{c2+} give \eqref{c6.3.1}.
\end{proof}

Although we focus almost exclusively on the chord-integral
$I_q:\kn\to [0,\infty)$, for $q\ge 0$ there is an obvious extension $I_q:\cn\to [0,\infty)$
\begin{equation}\label{c99}
	I_q(K) = \int_{K\cap \ell\neq\varnothing}
	|K\cap \ell|^q\, d\ell,\qquad \text{$q\ge 0$\  and\  $K\in\cn$},
\end{equation}
where the integration is with respect to the invariant Haar measure on $\Ln$.
The continuity of the functional $I_q:\cn\to [0,\infty)$ will be exploited.

\section{  Chord Measures}

The purpose of this section is to define chord measures and cone-chord measures, and to establish some important basic results about these two measures. Also established in this section is the surprising connection between mean curvature and dual mixed volumes.

\begin{defi}
	Let $K\in \kn$ and $q>0$. The {\it chord measure} $F_q(K,\cdot)$ is
	a finite Borel measure on $\sn$ defined by
	\begin{equation}\label{c11}
		F_q(K,\eta) = \frac{2q}{\omega_n} \int_{\nu_K^{-1}(\eta)} \wt V_{q-1}(K, z)\, d\hm(z),
		\ \ \ \ \text{Borel}\  \eta\subset \sn.
	\end{equation}
	For $K\in\kn$ containing the origin (but not necessarily in its interior) the {\it cone-chord measure} $G_q(K,\cdot)$ is
	a finite Borel measure on $\sn$ defined by
	\begin{equation}\label{c11.1}
		G_q(K,\eta) = \frac{2q}{(n+q-1)\omega_n} \int_{\nu_K^{-1}(\eta)} (z\cdot \nu_K(z))
		\wt V_{q-1}(K, z)\, d\hm(z), \ \ \ \ \text{Borel}\ \eta\subset \sn.
	\end{equation}
\end{defi}

\begin{rema}
	\leavevmode\newline
	\rm\phantom{X}
	\dr{a} When $q=1$ we know $\wt V_0(K, z)={\omega_n}/2$ for $\hm$-almost all $z\in \partial K$. Thus,
	\begin{equation}\label{c12}
		F_1(K,\cdot) = S(K, \cdot),
		\ \ \  \text{and} \ \ \ \ G_1(K,\cdot) = V(K, \cdot).
	\end{equation}
	Thus, the family of chord measures includes the surface area measure as a special case,
	and the family of cone-chord measures includes the cone-volume measure as a special case.

	\dr{b} We know $\wt V_n(K,z) =V(K)$, for all $z\in K$, thus when $q=n+1$, we have
	\begin{equation}\label{c13}
		F_{n+1}(K,\cdot) = \frac{2(n+1)}{\omega_n} V(K) S(K, \cdot), \ \ \  \text{and} \ \ \ \
		G_{n+1}(K,\cdot) = \frac{n+1}{\omega_n} V(K) V(K,\cdot).
	\end{equation}
	Since the functionals $K\mapsto S'(K, \cdot)$ and  $K\mapsto S(K,\cdot)$ are both additive,\negthinspace\footnote{Recall that a functional $f$ on convex sets is {\it additive} provided $f(K\cup L) + f(K\cap L) = f(K) + f(L)$ for all convex sets $K$ and $L$, whenever $K\cup L$ is convex.}
	special cases of the chord measures, $F_0(K,\cdot)$, $F_1(K, \cdot)$, and $\frac1{V(K)}F_{n+1}(K,\cdot)$
	are also additive. It would be interesting to characterize these additive functionals among the whole family of chord measures. The valuation theory that studies additive functionals over convex bodies (or related sets) has received much attention in recent years, see e.g., \cite{Ales1, Ales2, BL16, H12, HP14crelle, Lud04, Lud10, LR10annals, SW16jems}

	\dr{c} If $K$ is a polytope, it is easily seen that the chord measure $F_q(K,\cdot)$ and the cone
	chord-measure $G_q(K,\cdot)$ are discrete measures that concentrate on the normal directions
	of the facets of the polytope $K$.
	
	\dr{d} From \eqref{c1.0} we see that $\wt V_q(K, z) = \wt V_q(K-y, z-y)$, for $y\in \rn$, which implies that
	$F_q(K-y, \cdot) = F_q(K, \cdot)$; i.e., the chord measure $F_q(K,\cdot)$ is invariant under translations of $K$.
	
	\dr{e} From the definition of chord and cone-chord measures it follows that for each bounded Borel function $g: \sn \to \R$, we have
	\begin{align}
		\int_{\sn} g(v)\, dF_q(K,v)
		&=\frac {2q}{\omega_n} \int_{\partial K}  \wt V_{q-1}(K,z) g(\nu_K(z))\, d\hm(z),\label{XX1} \\
		\shortintertext{and}
		\int_{\sn} g(v)\, dG_q(K,v)
		&=\frac{2q}{(n+q-1)\omega_n} \int_{\partial K} (z\cdot \nu_K(z))
		\wt V_{q-1}(K, z) g(\nu_K(z))\, d\hm(z).\label{XX2}
	\end{align}
	\noindent
	The formulas can be established using
	a standard argument of approximation by simple functions.
	
	\dr{f} The formula \eqref{0.2} is extended to the following,
	\begin{equation}\label{cm2}
		dG_q(K, \cdot) = \frac1{n+q-1} h_K \, dF_q(K,\cdot).
	\end{equation}
	To see this simply replace $g:\sn\to\rbo$ by $gh_K:\sn\to\rbo$ in \eqref{XX1} and compare with \eqref{XX2}.
	
	\dr{g} The move from the Bruun-Minkowski theory to the $L_p$-Brunn-Minkowski theory (see  \cite{L93jdg, L96adv}) has proven to be fruitful. Within the $L_p$-theory the classical surface area measure $S$ becomes the $L_p$-surface area measure $S_p$,
	while the area measure $S'$ became the $L_p$-area measure $S'_p$. With $L_p$-surface area measure one could prove stronger $L_p$ affine isoperimetric inequalities \cite{HS09jdg, LYZ00jdg} and, when combined with the solution to the even $L_p$-Minkowski problem, these lead to stronger $L_p$-affine Sobolev inequalities \cite{HS09jfa, LYZ02jdg, Z99jdg}.
	Having $L_p$-surface area measure in hand, it was not difficult to see how classical affine surface area (from Blaschke's School), $\Omega$, can be extended to $L_p$-affine surface area $\Omega_p$. The $L_p$-affine surface area connected classical affine surface area ($p=1$) with classical centro-affine surface area ($p=n$). See e.g., \cite{L96adv, MW00adv, SW04adv, Wadv12, WYma10, Zhao15imrn} to where some of this lead.
	
	We now do much the same for chord measures $F_q$.
	For each $K\in\kno$ and
	for each $p\in \R$ define the {\it $L_p$ chord measures} by
	\begin{equation}\label{c11.2}
		F_{p,q}(K,\eta) = \frac{2q}{\omega_n} \int_{\nu_K^{-1}(\eta)}
		(z\cdot \nu_K(z))^{1-p}\wt V_{q-1}(K, z)\, d\hm(z),
		\ \ \ \
		\text{Borel}\ \eta\subset \sn.
	\end{equation}
	Now both chord measures and cone-chord measures arise as special cases. For the $L_p$-surface area measure the most important special cases turned out to be $p=-n,0,1,2,n$; see e.g., \cite{CW06adv, BLYZ13jams, L96adv, LYZ00duke, LYZ02duke, Lud03, Hug96}. Might different values of $p$ play prominent roles for $L_p$ chord measures?

	\dr{h} Of course,
	\[
	F_{p,q}(K,\cdot) = h_K^{1-p} F_q(K,\cdot).
	\]
	With the concept of $L_p$ chord measures, the chord Minkowski and log-Minkowski problems
	are unified within the {\bf $L_p$ chord Minkowski problem}:
	
	{\it
		If $\mu$ is a finite Borel measure on $\sn$, and  $p,q \in\R$ with $q\ge0$,
		what are necessary and sufficient conditions so that there exists a convex body
		$K\in \kno$ that solves the equation,
		
		\vspace{-.3cm}
		
		\begin{equation}
			F_{p,q}(K,\cdot) = \mu\, ?
		\end{equation}
	}

	Note that for $q=1$ the $L_p$ chord Minkowski problem becomes the $L_p$ Minkowski problem, a problem
	which has been extensively studied during the last two decades, see e.g.,
	\cite{BLYZ13jams, CLZ, CW06adv, HLYZ05dcg, LW13, L93jdg, Sta1, Zu, Zu2}.
	
	While it is obviously of interest to study the $L_p$ chord Minkowski problem, in this paper,
	we shall exclusively focus on the two critical cases $p=1$ and $p=0$.
\end{rema}

Since $\wt V_{q-1}(K,\cdot)$ is nonnegative and semi-continuous on $\partial K\setminus \sigma_K$,
it is $\hm$-measurable on $\partial K$.
Thus $F_q(K, \cdot)$ and $G_q(K, \cdot)$ are Borel measures.
From \eqref{2def} and \eqref{c2.0}, we know that $\wt V_{q-1}(K, \cdot)$ is a bounded function when
$q\ge 1$, and thus $F_q(K, \cdot)$ and $G_q(K, \cdot)$ are
finite Borel measures. However, when $0<q<1$, the function $\wt V_{q-1}(K, \cdot)$ may not be bounded
and it is not clear that it is integrable on $\partial K$. Thus, a proof is required that $F_q(K, \cdot)$ is finite when $0<q<1$.

In this section, we will prove that $F_q(K, \cdot)$ is finite for all $q>0$, and moreover, we will establish a formula
for chord integrals in terms of $F_q(K, \cdot)$, which
implies that the measure $F_q(K,\cdot)$ has its centroid at the origin, for all $K\in\kn$. This centroid property
turns out to be precisely the necessary and sufficient condition for the solvability of the chord Minkowski problem.

For the limiting case $q=0$,
when the convex body $K$ has smooth boundary, we show that $q\wt V_{q-1}(K, z)$
converges, as $q\to 0$, to the mean curvature of $\partial K$ at $z$ (times a constant factor).
This implies that,  when $K$ has positive curvature,
the limiting case $q=0$ of the chord measure $F_q(K,\cdot)$  is the area measure
$S'(K,\cdot)$ times a constant factor.

\vspace{5pt}

\begin{theo}\label{I9}
	If $K\in \kn$ and $q>0$, then the Borel measure $F_q(K,\cdot)$ is finite and
	\begin{equation}\label{c14}
		I_q(K) = \frac1{n+q-1} \int_{\sn} h_K(v)\, dF_q(K,v).
	\end{equation}
	If  $K\in\kn$ contains the origin and $q>0$, then
	\begin{equation}\label{c14+}
		I_q(K) =\int_{\sn}  dG_q(K,v).
	\end{equation}
\end{theo}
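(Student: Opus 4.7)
The plan is to derive the chord-integral formula (c14) by converting the volume integral from Lemma 3.3, $I_q(K)=\frac{q}{\omega_n}\int_K\wt V_{q-1}(K,z)\,dz$, into the boundary integral $\frac{2q}{(n+q-1)\omega_n}\int_{\partial K}(z\cdot\nu_K(z))\wt V_{q-1}(K,z)\,d\hm(z)$; the finiteness of $F_q(K,\cdot)$ and the cone-chord identity (c14+) will then fall out as short corollaries. Since both $I_q$ and $F_q(K,\cdot)$ are translation invariant (Remark 4.1(d) for the latter), I may translate so that $0\in\operatorname{int}K$, giving $z\cdot\nu_K(z)=h_K(\nu_K(z))\ge c_K>0$ on $\partial K$. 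The identity to prove then reduces to
\[
\int_K\wt V_{q-1}(K,z)\,dz\;=\;\frac{2}{n+q-1}\int_{\partial K}(z\cdot\nu_K(z))\,\wt V_{q-1}(K,z)\,d\hm(z),
\]
after which (c14) is obtained by multiplying both sides by $q/\omega_n$ and recognizing the right-hand side via the definition (c11) of $F_q(K,\cdot)$.

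For the main case $q>1$, formula (c2+) supplies the Riesz-potential representation $\wt V_{q-1}(K,z)=\frac{q-1}{n}\int_K|x-z|^{q-n-1}\,dx$, which turns the volume integral into the symmetric double integral $\frac{q-1}{n}\iint_{K\times K}|x-z|^{q-n-1}\,dx\,dz$. I then apply the divergence theorem to the vector field $z\mapsto z\Phi(z)$ on $K$, where $\Phi(z):=\int_K|x-z|^{q-n-1}\,dx$, obtaining
\[
\int_{\partial K}(z\cdot\nu_K(z))\Phi(z)\,d\hm(z)\;=\;n\int_K\Phi(z)\,dz+\int_K z\cdot\nabla_z\Phi(z)\,dz.
\]
A direct computation of $\nabla_z\Phi$, followed by the symmetrization $x\leftrightarrow z$ applied to $\int_K z\cdot\nabla_z\Phi(z)\,dz$, collapses the last term to $\frac{q-n-1}{2}\iint_{K\times K}|x-z|^{q-n-1}\,dx\,dz$; collecting pieces yields $\int_{\partial K}(z\cdot\nu_K)\Phi\,d\hm=\frac{n+q-1}{2}\iint_{K\times K}|x-z|^{q-n-1}\,dx\,dz$, which is precisely the reduced identity after substituting back $\Phi=\frac{n}{q-1}\wt V_{q-1}(K,\cdot)$. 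The case $q=1$ recovers the classical Minkowski identity $V(K)=\frac{1}{n}\int_{\sn}h_K\,dS(K,\cdot)$, since $\wt V_0(K,\cdot)=\omega_n/2$ on $\partial K$ forces $F_1(K,\cdot)=S(K,\cdot)$ in (c11).

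The principal obstacle is the range $0<q<1$, in which the Riesz-potential formula breaks down ($\int_K|y-z|^{q-n-1}\,dz$ diverges for $y\in\partial K$) and $\wt V_{q-1}(K,\cdot)$ itself can be unbounded on $\partial K$. I plan to handle this range by approximation: replace $K$ by the Minkowski summands $K_\varepsilon=K+\varepsilon B^n$ (which are of class $C^{1,1}$), establish (c14) for each $K_\varepsilon$ either by extending the divergence-theorem argument above or by a direct polar-coordinate calculation based on the identity $\int_{\partial K_\varepsilon}|y-z|^{q-n-1}((y-z)\cdot\nu_{K_\varepsilon}(y))\,d\hm(y)=n\wt V_{q-1}(K_\varepsilon,z)$, valid for all $q>0$ and obtained by converting the left side, via the polar change $y=z+\rho u$ centered at $z$, into $\int_{\sn}\rho\lsub{K_\varepsilon,z}(u)^{q-1}\,du$. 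Then pass to the limit $\varepsilon\to 0^+$: the continuity of $I_q$ on $\cn$ noted after (c99) handles the left-hand side, while for the right-hand side one needs weak convergence $F_q(K_\varepsilon,\cdot)\to F_q(K,\cdot)$ on $\sn$. Verifying this weak convergence, and in particular ensuring that no mass of the chord measure escapes at points of $\partial K$ where $\wt V_{q-1}(K,\cdot)$ blows up, is the most delicate step.

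Once (c14) is in hand, the remaining conclusions are immediate. The finiteness of $F_q(K,\cdot)$ follows from $F_q(K,\sn)\le c_K^{-1}\int_{\sn}h_K\,dF_q(K,\cdot)=(n+q-1)\,c_K^{-1}\,I_q(K)<\infty$ by Lemma 3.1. The cone-chord identity (c14+) follows from (cm2), which gives $G_q(K,\sn)=\frac{1}{n+q-1}\int_{\sn}h_K\,dF_q(K,\cdot)=I_q(K)$ via (c14).
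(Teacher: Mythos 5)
Your argument for $q>1$ is a genuinely different and valid route. You convert $\wt V_{q-1}$ into the Riesz potential $\Phi(z)=\int_K|x-z|^{q-n-1}\,dx$, apply the divergence theorem once to $z\mapsto z\,\Phi(z)$, and use the $x\leftrightarrow z$ symmetrization of $\int_K z\cdot\nabla_z\Phi\,dz$ to produce the factor $\tfrac{n+q-1}{2}$; the arithmetic checks out and yields \eqref{c10}. The paper argues instead at the level of radial functions: it applies the divergence theorem \emph{direction by direction} to the field $z\mapsto z\big(\rho\lsub{K,z}(u)+\varepsilon\big)^{q-1}$, using the cone-volume Jacobian identity from Lemma \ref{I3} in place of your Riesz-kernel symmetrization. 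The payoff of the paper's version is that a single argument covers all $q>0$. A point you should make explicit in your version: for $1<q\le 2$ the kernel $|x-z|^{q-n-3}(z-x)$ is not absolutely integrable near the diagonal, so before invoking Fubini and the divergence theorem you need the radial cancellation in $\nabla_z\Phi$ and the resulting bound $|\nabla_z\Phi(z)|\lesssim\operatorname{dist}(z,\partial K)^{q-2}$, which is in $L^1(K)$ precisely for $q>1$.

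The range $0<q<1$ is where the proposal genuinely does not close, and you flag it yourself. Approximating by $K_\varepsilon=K+\varepsilon B^n$ does not by itself give \eqref{c14} for the approximants: the Riesz representation \eqref{c2+} of $\wt V_{q-1}$ is unavailable for $q<1$, and the polar-coordinate identity you cite is only the Jacobian ingredient, not the volume-to-boundary comparison. Worse, you cannot speak of ``weak convergence $F_q(K_\varepsilon,\cdot)\to F_q(K,\cdot)$'' because the finiteness of $F_q(K,\cdot)$ is part of the conclusion; and the right-hand side of \eqref{c14} for $K_\varepsilon$ involves a moving surface $\partial K_\varepsilon$ and a possibly unbounded $\wt V_{q-1}(K_\varepsilon,\cdot)$, so no standard convergence theorem is available without substantial extra work. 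The paper's resolution is to regularize the \emph{integrand} rather than the \emph{body}: $\big(\rho\lsub{K,z}(u)+\varepsilon\big)^{q-1}$ is bounded for every $q>0$, so the divergence theorem applies directly on $K$ (Lemma \ref{I6}); the stray $\varepsilon^{q-1}$ and $\varepsilon(\rho+\varepsilon)^{q-2}$ terms are disposed of by Lemmas \ref{I3} and \ref{I5}; and for $0<q<1$ the limit $\varepsilon\downarrow 0$ is taken by \emph{monotone} convergence (the integrand increases), with the a priori finiteness of $I_q(K)$ from Lemma \ref{I11} supplying, at the same time, the finiteness of the boundary integral. That inside-the-power $\varepsilon$ is the device you are missing; without it the case $0<q<1$ remains an open gap.

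One further small omission: \eqref{c14} is asserted for all $K\in\kn$, and after translating to $0\in\interior K$ the function $h_K$ changes by $-z_0\cdot v$. To remove the translate you need $\int_{\sn}v\,dF_q(K,v)=0$, which is the paper's Corollary \ref{I9.1} and is deduced from the $\kno$ case and the translation invariance of $I_q$ and $F_q$; your plan assumes this silently.
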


To prove Theorem \ref{I9}, we require several lemmas.

\begin{lemm}\label{I2}
	If $K\in\kno$, then for $\hnn$-almost all $z\in K$ and for $\hm$-almost all $u\in\sn$,
	\begin{equation}\label{c7}
		z\cdot \nabla \rho\lsub{K,z}(u) = \rho\lsub{K,z}(u) - \frac {y\cdot \nu_K(y)}{u\cdot \nu_K(y)},
	\end{equation}
	where 
    $\nabla \rho\lsub{K,z}(u)$ is the gradient in $\rn$ of the function $z\mapsto \rho\lsub{K,z}(u)$ on $K$, 
    and where $y=z+\rho\lsub{K,z}(u)u$.
\end{lemm}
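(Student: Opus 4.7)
The plan is to exploit the representation \eqref{c1.1.1} of the radial function together with standard convex analysis. Using the identity $h_{K,z}(v) = h_K(v) - z \cdot v$ noted after \eqref{c1.1}, formula \eqref{c1.1.1} rewrites as
\[
\rho\lsub{K,z}(u) = \inf\Big\{\tfrac{h_K(v) - z \cdot v}{u \cdot v} : v \in \sn,\ u \cdot v > 0\Big\},
\]
displaying $z \mapsto \rho\lsub{K,z}(u)$, for fixed $u$, as an infimum of affine functions in $z$, hence as a concave function. The equality condition in \eqref{c1.1.1} further asserts that the infimum is attained precisely on $\bu_K(y) \cap \{v : u \cdot v > 0\}$, where $y = z + \rho\lsub{K,z}(u)\, u$.

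First I would verify the measure-theoretic regularity: for $\hnn$-almost all $z \in K$ and $\hm$-almost all $u \in \sn$, the corresponding point $y$ lies in $\partial K \setminus \sigma_K$. Since $\hnn(\partial K) = 0$, it suffices to consider $z \in \Int K$. For such $z$, the radial projection $\Psi_z : \partial K \to \sn$ given by $\Psi_z(w) = (w-z)/|w-z|$ is Lipschitz (its constant depends only on $\mathrm{dist}(z, \partial K)>0$), so it sends the $\hm$-null set $\sigma_K$ to a $\hm$-null subset of $\sn$. That subset is precisely the set of ``bad'' directions $u$ for the given $z$, so Fubini yields the claim.

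At such a good pair $(z,u)$, the boundary point $y$ has a unique outer unit normal $\nu_K(y)$, so $\bu_K(y) = \{\nu_K(y)\}$ and $v^* := \nu_K(y)$ is the unique minimizer in the infimum above. Because $z \in \Int K$ forces $h_K(v) - z \cdot v > 0$ for all $v \in \sn$, the quotient blows up as $u \cdot v \to 0^+$, so the infimum is effectively taken over a compact set of the form $\{v \in \sn : u \cdot v \ge \delta\}$. The classical Danskin-type subdifferential formula for a supremum of continuous affine functions over a compact index set (closed convex hull of the gradients of those attaining the supremum) then gives that $-\rho\lsub{K,z}(u)$, as a function of $z$, has singleton subdifferential $\{v^*/(u \cdot v^*)\}$ at $z$. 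Hence $z \mapsto \rho\lsub{K,z}(u)$ is differentiable at $z$ with
\[
\nabla \rho\lsub{K,z}(u) = -\frac{\nu_K(y)}{u \cdot \nu_K(y)}.
\]
Taking the inner product with $z$ and substituting $z = y - \rho\lsub{K,z}(u)\, u$ gives
\[
z \cdot \nabla \rho\lsub{K,z}(u) = -\frac{z \cdot \nu_K(y)}{u \cdot \nu_K(y)} = \rho\lsub{K,z}(u) - \frac{y \cdot \nu_K(y)}{u \cdot \nu_K(y)},
\]
which is precisely \eqref{c7}.

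The principal obstacle is the passage from ``unique minimizer'' to honest $C^1$ differentiability of the infimum, which has to be justified by reducing to a compact index set and invoking the envelope/Danskin identity for the subdifferential rather than merely producing one element of it. The Fubini-style null-set transfer establishing a.e.\ regularity of $y$ is routine once one observes that radial projection from an interior point is Lipschitz on $\partial K$.
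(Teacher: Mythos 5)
Your proof is correct, and it takes a genuinely different route from the paper's. The paper establishes differentiability by invoking concavity of $z\mapsto\rho\lsub{K,z}(u)$ (differentiable $\hnn$-a.e.\ by Alexandrov/Rademacher), then computes the gradient by \emph{implicit differentiation} of the constraint $\rho\lsub{K}(z+\rho\lsub{K,z}(u)u)=1$, using Euler's homogeneity relation $y\cdot\nabla\rho\lsub{K}(y)=-1$ and the inner-normal characterization $\nabla\rho\lsub{K}(y)=-\nu_K(y)/(y\cdot\nu_K(y))$ to solve for $\nabla\rho\lsub{K,z}(u)$. You instead exploit the representation \eqref{c1.1.1} as an \emph{infimum of affine functions of $z$} and invoke the Danskin/envelope theorem: when the minimizing normal is unique (i.e.\ $y\in\partial K\setminus\sigma_K$), the superdifferential collapses to the single point $-\nu_K(y)/(u\cdot\nu_K(y))$, giving both differentiability and the value of the gradient in one stroke. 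Both routes land on the same identity $\nabla\rho\lsub{K,z}(u)=-\nu_K(y)/(u\cdot\nu_K(y))$ and then close by the same substitution $z=y-\rho\lsub{K,z}(u)u$. A genuine advantage of your treatment is that it makes explicit which $(z,u)$-pairs are ``good'': your Lipschitz radial-projection argument transfers $\hm(\sigma_K)=0$ to an $\hm$-null set of directions $u$ for each fixed interior $z$, whereas the paper's implicit-differentiation step quietly requires $\rho\lsub{K}$ to be differentiable at $y=y(z,u)$ without spelling out the associated exceptional set. The trade-off is that the paper's implicit-function calculation is shorter, while yours needs the reduction to a compact index set (to legitimize Danskin) and the observation that, in finite dimensions, a concave function with singleton superdifferential is genuinely differentiable there.
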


\begin{proof} Recall $\rhok = \rho\lsub{K,0}$.
	The function $z\mapsto \rho\lsub{K,z}(u)$ on $K$, is easily seen to be concave. Hence it is differentiable $\hnn$-almost everywhere on $K$.
	
	Since $y\in\partial K$, the definition of radial function gives
	\begin{equation}\label{c8}
		\rhok(z+\rho\lsub{K,z}(u)u)=\rhok(y) =1.
	\end{equation}
	Since
	$\rhok$ is homogeneous of degree $-1$, Euler's theorem on homogeneous functions gives,
	\begin{equation}\label{c8.1.1}
		y\cdot \nabla \rhok(y) = -\rhok(y) =-1.
	\end{equation}
	By definition of the radial function, $\rhok(x)=1 \iff x\in\partial K$. From this we conclude that $\nabla \rhok(y)= -a_K(y)\nu_K(y)$, where $a_K(y)>0$; i.e., $\nabla \rhok(y)$ is an inner normal of $\partial K$ at $y$. Substituting $-a_K(y)\nu_K(y)$ for $\nabla \rhok(y)$ in \eqref{c8.1.1} gives
	\begin{equation}\label{c9}
		\nabla \rhok(y) = - \frac1{y\cdot \nu_K(y)} \nu_K(y).
	\end{equation}
	Differentiating the functions in \eqref{c8} with respect to $z$ gives
	\[
	\nabla\rhok(y) (I + u^\intercal\, \nabla\rho\lsub{K,z}(u))=0,
	\]
	where $I$ is the identity matrix. Thus,
	\[
	\nabla \rhok(y) + \big(u\cdot \nabla\rhok(y)\big) \nabla \rho\lsub{K,z}(u)=0.
	\]
	By multiplying both sides by $z^\intercal$ and using \eqref{c9}, we have
	\begin{align*}
		z\cdot \nabla \rho\lsub{K,z}(u) &= -\frac{z\cdot \nabla \rhok(y)}{u\cdot \nabla \rhok(y)} \\
		&=-\frac{(y-\rho\lsub{K,z}(u)u)\cdot \nabla \rhok(y)}{u\cdot \nabla \rhok(y)} \\
		&=-\frac{y\cdot\nabla\rhok(y)}{u\cdot\nabla\rhok(y)} + \rho\lsub{K,z}(u) \\
		&=-\frac{y\cdot\nu_K(y)}{u\cdot\nu_K(y)} + \rho\lsub{K,z}(u).
	\end{align*}
\end{proof}

\begin{lemm}\label{I3}
	If $K\in\kno$, and real $q,\varepsilon>0$, then
	\begin{multline}
		(q-1)\int_{\sn}\int_K (\rho\lsub{K,z}(u)+\varepsilon)^{q-2} \frac{(z+\rho\lsub{K,z}(u)u)\cdot\nu_K(z+\rho\lsub{K,z}(u)u)}{u\cdot \nu_K(z+\rho\lsub{K,z}(u)u)} dz du \\
		= \int_{\partial K} \int_{\sn} y\cdot\nu_K(y) (\rho\lsub{K,y}(v)+\varepsilon)^{q-1} dv d\hm(y)
		- \varepsilon^{q-1} n^2\omega_n V(K).
	\end{multline}
\end{lemm}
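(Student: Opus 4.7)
The plan is, for each fixed $u\in\sn$, to convert the inner integral over $z\in K$ into a boundary integral via the change of variables $z=y-tu$, where $y=z+\rho\lsub{K,z}(u)u\in\partial K$ is the exit point of the ray through $z$ in direction $u$ and $t=\rho\lsub{K,z}(u)$. Writing $\rn=u^\perp\oplus\mathbb{R}u$, the ``upper'' boundary sheet $\{y\in\partial'\negthinspace K:u\cdot\nu_K(y)>0\}$ projects bijectively onto $K|u^\perp$ with area factor $u\cdot\nu_K(y)$, so $dz=(u\cdot\nu_K(y))\,dt\,d\hm(y)$ for $t\in[0,\rho\lsub{K,y}(-u)]$. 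This Jacobian exactly cancels the denominator $u\cdot\nu_K(y)$ already present in the integrand.

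After this substitution the inner $t$-integral is elementary,
\[
(q-1)\int_0^{\rho\lsub{K,y}(-u)}(t+\varepsilon)^{q-2}\,dt=(\rho\lsub{K,y}(-u)+\varepsilon)^{q-1}-\varepsilon^{q-1},
\]
so after swapping the $u$ and $y$ integrals and substituting $v=-u$ (under which the constraint $u\cdot\nu_K(y)>0$ becomes $v\cdot\nu_K(y)<0$), the left-hand side equals
\[
\int_{\partial'\negthinspace K} y\cdot\nu_K(y)\int_{\{v\in\sn:\,v\cdot\nu_K(y)<0\}}\bigl[(\rho\lsub{K,y}(v)+\varepsilon)^{q-1}-\varepsilon^{q-1}\bigr]\,dv\,d\hm(y).
\]
For $y\in\partial'\negthinspace K$ and $v\cdot\nu_K(y)>0$ the ray from $y$ in direction $v$ leaves $K$ at $t=0$, so $\rho\lsub{K,y}(v)=0$ and the integrand vanishes on that hemisphere; hence the $v$-integration may be extended to all of $\sn$ without change.

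Splitting this extended integral into its two pieces and using $\int_\sn dv=n\omega_n$ together with the divergence-theorem identity $\int_{\partial K}y\cdot\nu_K(y)\,d\hm(y)=nV(K)$ produces exactly the claimed right-hand side, the $-\varepsilon^{q-1}n^2\omega_n V(K)$ correction coming from the $-\varepsilon^{q-1}$ summand. The main point requiring care is the measure-theoretic justification of the change of variables and the Fubini swap on a general (possibly non-smooth) $K$; this is handled by restricting throughout to $\partial'\negthinspace K$, which has full $\hm$-measure in $\partial K$, and by noting that for a.e.\ $u\in\sn$ the equator $\{y\in\partial'\negthinspace K:u\cdot\nu_K(y)=0\}$ is $\hm$-negligible so that the upper sheet decomposition holds up to null sets. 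The borderline exponent $q=1$ is internally consistent: the prefactor $q-1$ annihilates the left-hand side, and on the right-hand side the $v$-integral reduces to $n\omega_n\int_{\partial K}y\cdot\nu_K(y)\,d\hm(y)=n^2\omega_n V(K)$, cancelling $\varepsilon^{q-1}n^2\omega_n V(K)=n^2\omega_n V(K)$.
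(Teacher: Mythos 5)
Your proof is correct, and it reaches the identity by a genuinely different route than the paper. The paper's argument fixes $z$ and first converts the spherical variable $u$ into the exit point $y\in\partial K$ via the cone-volume Jacobian $\rho\lsub{K,z}(u)^n\,du = \rho\lsub{K,z}(u)\,(u\cdot\nu_K(y))\,d\hm(y)$; this produces an intermediate double integral over $(z,y)\in K\times\partial K$ with the singular kernel $|z-y|^{1-n}$, which is then resolved by a second change of variables (polar coordinates centered at $y$, i.e. $z-y=rv$), after which the $r$-integral is elementary and the $v$-integration over all of $\sn$ appears automatically. Your argument instead fixes the direction $u$ and performs a single change of variables $z\mapsto(y,t)$ (exit point plus signed distance along the chord) using the slicing Jacobian $dz=(u\cdot\nu_K(y))\,dt\,d\hm(y)$, does the elementary $t$-integral immediately, and then needs only the reflection $v=-u$ plus the observation that $\rho\lsub{K,y}(v)=0$ on the hemisphere $v\cdot\nu_K(y)>0$ to extend the $v$-integral to all of $\sn$. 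The upshot is that your version never passes through a singular kernel and avoids a second change of variables, at the cost of the extra hemisphere-extension step; both proofs rest on the same underlying geometric cancellation of the Jacobian $u\cdot\nu_K(y)$ and the same elementary $1$D integral. Your handling of the borderline case $q=1$ is a nice sanity check and your remark on measurability (restricting to $\partial'\!K$ and using that the equator is $\hm$-null for a.e. $u$) correctly locates the only delicate point in the Fubini step.
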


\begin{proof} For fixed $z$, the formula for infinitesimal cone volume (see for example,
	\cite{LYZ02duke}, Lemma 2) is
	\begin{equation}\label{gto}
		\rho\lsub{K,z}(u)^n\, du = \rho\lsub{K,z}(u)u\cdot \nu_K(y)\, d\hm_{\partial K}(y),
	\end{equation}
	where $\hm_{\partial K}$ denotes $(n-1)$-dimensional Hausdorff measure restricted to $\partial K$, and  $y=z+\rho\lsub{K,z}(u)u$.
	Now \eqref{gto}, Fubini's theorem, and finally switching to polar coordinates with
	$z-y= rv$, where $r\ge 0$ and $v\in\sn$, give
	\begin{align*}
		(q-1)&\int_K \int_{\sn}(\rho\lsub{K,z}(u)+\varepsilon)^{q-2}
		\frac{y\cdot\nu_K(y)}{u\cdot \nu_K(y)}\, du dz \\
		&=(q-1) \int_K \int_{\partial K} (|z-y|+\varepsilon)^{q-2}
		(y\cdot \nu_K(y)) |z-y|^{1-n}\, d\hm(y) dz \\
		&=(q-1)\int_{\partial K} \int_{\sn} \int_0^{\rho\lsub{K,y}(v)}
		(y\cdot \nu_K(y)) (r+\varepsilon)^{q-2} dr dv d\hm(y) \\
		&=\int_{\partial K} \int_{\sn} (y\cdot \nu_K(y))((\rho\lsub{K,y}(v)+\varepsilon)^{q-1}
		-\varepsilon^{q-1}) \, dvd\hm(y)\\
		&=\int_{\partial K} \int_{\sn} (y\cdot \nu_K(y))(\rho\lsub{K,y}(v)+\varepsilon)^{q-1} \, dvd\hm(y)
		- \varepsilon^{q-1} n^2\omega_n V(K).
	\end{align*}
\end{proof}

\begin{lemm}\label{I5}
	If $K\in\kno$,
	then for $q>0$ and $\varepsilon >0$,
	\[
	\varepsilon \int_{\sn}\int_K (\rho\lsub{K,z}(u) +\varepsilon)^{q-2} dz du \to 0 \ \text{ as }\
	\varepsilon \to 0^+.
	\]
\end{lemm}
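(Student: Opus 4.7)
The plan is to apply Fubini and then reduce the inner integration to a one-dimensional integral along chords parallel to $u$, after which the estimate becomes elementary and splits into the usual three cases ($q>1$, $q=1$, $0<q<1$).

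First, fix $u\in\sn$ and parametrize $z\in K$ by writing $z=w+su$, where $w\in K|u^\perp$ and $s$ ranges over the chord $K\cap(w+\rbo u)=\{w+su:s_-(w,u)\le s\le s_+(w,u)\}$. By the definition of the radial function,
\[
\rho\lsub{K,z}(u)=s_+(w,u)-s,
\]
so substituting $t=s_+(w,u)-s$ (with $dz=dw\,dt$) transforms the inner integral into
\[
\int_K(\rho\lsub{K,z}(u)+\varepsilon)^{q-2}\,dz
=\int_{K|u^\perp}\!\!\int_0^{X_K(w,u)}(t+\varepsilon)^{q-2}\,dt\,dw,
\]
where $X_K$ is the parallel $X$-ray from \eqref{c6}. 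The one-variable integral is explicit: for $q\ne 1$ it equals $\tfrac{1}{q-1}\bigl[(X_K(w,u)+\varepsilon)^{q-1}-\varepsilon^{q-1}\bigr]$, and for $q=1$ it equals $\log(1+X_K(w,u)/\varepsilon)$.

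Next, I would estimate separately in the three regimes, letting $d=\operatorname{diam}(K)$ and $C=\int_{\sn}\vv(K|u^\perp)\,du$ (finite, by the Cauchy projection formula).

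\textbf{Case $q>1$.} Since $0\le(X+\varepsilon)^{q-1}-\varepsilon^{q-1}\le(d+\varepsilon)^{q-1}$, the whole double integral is bounded by a constant (uniformly for $\varepsilon\in(0,1]$), hence multiplying by $\varepsilon$ yields a limit of $0$.

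\textbf{Case $q=1$.} Here $\log(1+X_K(w,u)/\varepsilon)\le\log(1+d/\varepsilon)$, so the double integral is at most $C\log(1+d/\varepsilon)$, and $\varepsilon\log(1+d/\varepsilon)\to 0$.

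\textbf{Case $0<q<1$.} Now $q-1<0$, and $(X+\varepsilon)^{q-1}\le\varepsilon^{q-1}$, so
\[
0\le \tfrac{1}{1-q}\bigl[\varepsilon^{q-1}-(X+\varepsilon)^{q-1}\bigr]\le \tfrac{1}{1-q}\varepsilon^{q-1}.
\]
Integrating over $K|u^\perp$ and $\sn$ and then multiplying by $\varepsilon$ gives a bound of $\tfrac{C}{1-q}\varepsilon^{q}$, which tends to $0$ because $q>0$.

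There is no real obstacle here; the only mild subtlety is keeping track of the sign of $q-1$ in the closed-form antiderivative. The entire proof is a straightforward Fubini reduction followed by the elementary bound $(X+\varepsilon)^{q-1}-\varepsilon^{q-1}=O(1)$ (or $O(\log(1/\varepsilon))$, or $O(\varepsilon^{q-1})$) according to whether $q>1$, $q=1$, or $0<q<1$.
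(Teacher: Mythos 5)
Your proof is correct, and the route you take is genuinely different from (and arguably cleaner than) the paper's. The paper parametrizes $K$ radially, writing $z=rv$ in polar coordinates, and then needs the geometric lower bound $\rho\lsub{K,z}(u)\geq b(\rhok(v)-r)$ with $b=\min\rhok/\max\rhok$, which it establishes by a similar-triangles argument inside $K$. After that inequality is in hand the paper integrates in $r$, getting a bound of order $\varepsilon^q$, and only the case $0<q<1$ is carried out explicitly (with $q\geq2$ declared obvious and $1\leq q<2$ declared similar). You instead parametrize along the chords in direction $u$, writing $z=w+su$ with $w\in K|u^\perp$, which makes the inner integral an exact antiderivative because $\rho\lsub{K,z}(u)=s_+(w,u)-s$; you get $\int_0^{X_K(w,u)}(t+\varepsilon)^{q-2}\,dt$ in closed form with no auxiliary geometric lemma and no loss of a constant. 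This is the same slicing already used in the paper's proof of Lemma \ref{I1}, so it fits the surrounding machinery, and it yields the same $O(\varepsilon^q)$ rate for $0<q<1$ along with clean $O(\varepsilon)$ and $O(\varepsilon\log(1/\varepsilon))$ bounds for $q>1$ and $q=1$. The only thing to be careful about, and you are, is the sign of $q-1$ in the antiderivative.
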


\begin{proof} The case of $q\ge 2$ is obvious. We now establish the case where $0<q<1$.
	The case where $1\le q<2$ can be handled in a similar manner.

	We first show that there exists a constant $b>0$ so that
	\begin{equation}\label{I4}
		\rho\lsub{K,z}(u) \ge b(\rhok(v) -r),
	\end{equation}
	where $z=rv\in K$ and $u, v\in \sn$. To see this consider the triangle formed by the origin
	and the two boundary points of $K$, say
	$z_0=\rhok(u)u$ and $z_1=\rhok(v)v$.
	Suppose $z+\rho u$ is on the triangle's side $z_0z_1$,
	then
	\[
	\frac{\rhok(v)-r}{\rhok(v)} = \frac{\rho}{\rhok(u)}.
	\]
	That $\rho\lsub{K,z}(u)\ge \rho$ follows from the definition of the radial function and the fact that $z+\rho u\in K$.
	Thus,
	\[
	\rho\lsub{K,z}(u) \ge \rho = \frac{\rhok(u)}{\rhok(v)} (\rhok(v)-r).
	\]
	Obviously, $b= \min_{\sn} \rho\lsub{K}/\max_{\sn} \rho\lsub{K}>0$ provides the desired lower bound for
	$\rho\llsub{K}(u)/{\rho\llsub{K}(v)}$.

	Let $\rho\lsub{1}$ be the maximum of $\rhok(v)$.
	Using \eqref{I4} and polar coordinates with $z=rv$, where $r \ge 0$ and $v\in \sn$, we have
	\begin{align*}
		\varepsilon \int_K (\rho\lsub{K,z}(u) +\varepsilon)^{q-2} dz
		&\le\varepsilon \int_{\sn} \int_0^{\rho\llsub{K}(v)}
		(b(\rhok(v)-r) +\varepsilon)^{q-2} r^{n-1}drdv \\
		&\le\varepsilon \rho\lsub{1}^{n-1} \int_{\sn}
		\int_0^{\rho\llsub{K}(v)} (b(\rhok(v)-r) +\varepsilon)^{q-2}drdv \\
		&=\frac{\varepsilon \rho\lsub{1}^{n-1}}{b(1-q)} \int_{\sn}
		\big(\varepsilon^{q-1} - (b\rhok(v)+\varepsilon)^{q-1}\big)dv \\
		&\le \frac{\rho\lsub{1}^{n-1}n\omega_n}{b(1-q)}
		\big(\varepsilon^q - \varepsilon (b\rho\lsub{1}+\varepsilon)^{q-1}\big),
	\end{align*}
	and hence,
	\[
	\varepsilon \int_K (\rho\lsub{K,z}(u) +\varepsilon)^{q-2} dz
	\longrightarrow 0,\quad \text{ uniformly as } \varepsilon \to 0^+.
	\]
\end{proof}

\begin{lemm}\label{I6}
	If $K\in\kno$,
	then for $q>0$  and $\varepsilon >0$,
	\begin{multline*}
		\int_{\partial K} \int_{S_y^+} (y\cdot \nu_K(y)) (\rho\lsub{K,y}(v)+\varepsilon)^{q-1}\, d\hm(y)dv \\
		=\frac{n+q-1}2 \int_{\sn}\int_K (\rho\lsub{K,z}(u)+\varepsilon)^{q-1}\, dz du
		+\frac{(1-q)\varepsilon}2 \int_{\sn}\int_K (\rho\lsub{K,z}(u)+\varepsilon)^{q-2}\, dz du.
	\end{multline*}
\end{lemm}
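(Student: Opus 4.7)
The plan is to obtain Lemma \ref{I6} as an algebraic consequence of Lemma \ref{I3} after applying the pointwise identity of Lemma \ref{I2} and integrating by parts in the $z$-variable. Abbreviate
\begin{gather*}
J := \int_{\partial K}\int_{S_y^+}(y\cdot\nu_K(y))(\rho\lsub{K,y}(v)+\varepsilon)^{q-1}\,dv\,d\hm(y), \\
A := \int_{\sn}\int_K(\rho\lsub{K,z}(u)+\varepsilon)^{q-1}\,dz\,du,\qquad B := \int_{\sn}\int_K(\rho\lsub{K,z}(u)+\varepsilon)^{q-2}\,dz\,du,
\end{gather*}
so that $J$ is the left-hand side of Lemma \ref{I6} and its right-hand side is $\tfrac{n+q-1}{2}A+\tfrac{(1-q)\varepsilon}{2}B$.

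Substituting the identity $\frac{y\cdot\nu_K(y)}{u\cdot\nu_K(y)} = \rho\lsub{K,z}(u) - z\cdot\nabla_z\rho\lsub{K,z}(u)$ from Lemma \ref{I2} into the left-hand side of Lemma \ref{I3} splits it into a \emph{radial} and a \emph{gradient} piece. The radial piece $(q-1)\int_{\sn}\int_K\rho(\rho+\varepsilon)^{q-2}\,dz\,du$ simplifies via the elementary identity $\rho(\rho+\varepsilon)^{q-2} = (\rho+\varepsilon)^{q-1} - \varepsilon(\rho+\varepsilon)^{q-2}$ to $(q-1)A - (q-1)\varepsilon B$. The gradient piece, using $(q-1)(\rho+\varepsilon)^{q-2}\nabla_z\rho = \nabla_z[(\rho+\varepsilon)^{q-1}]$, equals $\int_{\sn}\int_K z\cdot\nabla_z[(\rho+\varepsilon)^{q-1}]\,dz\,du$; applying $z\cdot\nabla F = \nabla\cdot(zF) - nF$ and the divergence theorem on $K$ rewrites the inner $z$-integral so that this piece becomes
\[
\int_{\sn}\int_{\partial K}(z\cdot\nu_K(z))(\rho\lsub{K,z}(u)+\varepsilon)^{q-1}\,d\hm(z)\,du \,-\, nA.
\]

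To identify the boundary double integral with $J$, I use that for $\hm$-a.e.\ $z\in\partial K$ the function $u\mapsto\rho\lsub{K,z}(u)$ vanishes on the interior of the closed hemisphere $\sn\setminus S_z^+$, whose spherical measure is $n\omega_n/2$. Combined with $\int_{\partial K}z\cdot\nu_K(z)\,d\hm(z) = nV(K)$, this gives
\[
\int_{\sn}\int_{\partial K}(z\cdot\nu_K(z))(\rho+\varepsilon)^{q-1}\,d\hm(z)\,du = J + \tfrac{1}{2}\,n^2\omega_n V(K)\,\varepsilon^{q-1}.
\]
Applying the same hemisphere-extension to the right-hand side of Lemma \ref{I3} shows that it equals $J - \tfrac{1}{2}n^2\omega_n V(K)\varepsilon^{q-1}$. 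Equating the two resulting expressions for the identity of Lemma \ref{I3}, the corrective $\tfrac{1}{2}n^2\omega_n V(K)\varepsilon^{q-1}$ terms cancel on both sides, leaving the linear relation $(n+q-1)A - (q-1)\varepsilon B = 2J$, which rearranges to the claim.

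The main obstacle will be the rigorous justification of the integration by parts, since $z\mapsto\rho\lsub{K,z}(u)$ is only concave on $K$ (hence differentiable $\hnn$-a.e., but not $C^1$ in general). For each fixed $\varepsilon>0$, however, the composition $F(z)=(\rho\lsub{K,z}(u)+\varepsilon)^{q-1}$ is continuous on $K$ and locally Lipschitz on $\Int K$ uniformly in $u$, so it lies in $W^{1,\infty}_{\mathrm{loc}}(\Int K)\cap C(K)$; the divergence theorem then follows by a standard inner approximation of $K$ by smooth convex bodies combined with dominated convergence, the uniform $L^\infty$ bounds on $\rho\lsub{K,z}$ controlling the passage to the limit.
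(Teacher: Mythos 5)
Your proof is essentially the same computation as the paper's, merely reorganized: the paper applies the divergence theorem to $z(\rho\lsub{K,z}(u)+\varepsilon)^{q-1}$ first and then brings in Lemmas~\ref{I2} and~\ref{I3}, whereas you substitute Lemma~\ref{I2} into the left-hand side of Lemma~\ref{I3} and only afterwards integrate by parts; the hemisphere decomposition at the end, giving $C := \int_{\sn}\int_{\partial K}(z\cdot\nu_K(z))(\rho\lsub{K,z}(u)+\varepsilon)^{q-1}\,d\hm(z)\,du = J + \tfrac12 n^2\omega_n V(K)\varepsilon^{q-1}$, is identical. One slip worth correcting: since the left side of Lemma~\ref{I3} becomes $(q-1)\int_{\sn}\int_K(\rho+\varepsilon)^{q-2}\bigl(\rho - z\cdot\nabla_z\rho\bigr)\,dz\,du$, the gradient piece carries a minus sign and equals $-\int_{\sn}\int_K z\cdot\nabla_z\bigl[(\rho+\varepsilon)^{q-1}\bigr]\,dz\,du = nA - C$, the negative of what you wrote; with this fixed, the identity of Lemma~\ref{I3} reads $(n+q-1)A - (q-1)\varepsilon B - C = C - n^2\omega_n V(K)\varepsilon^{q-1}$, the $\varepsilon^{q-1}$ terms cancel upon substituting for $C$, and your stated relation $(n+q-1)A - (q-1)\varepsilon B = 2J$ follows as claimed.
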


\begin{proof}
	Since the boundary 
	$\partial K$ is line-free (see Schneider \cite{S14}) in direction $u$
	for almost all $u\in S^{n-1}$, we choose such a direction $u$.
	For such a direction $u$, the mapping $z\mapsto z(\rho\lsub{K,z}(u)+\varepsilon)^{q-1}$ is a bounded vector field in $K$,
	and $\nabla\rho\lsub{K,z}(u)$ is bounded except for an $\thh^{n-1}$-null set. Thus
	the divergence theorem (see e.g., \cite{B-M94}) and \eqref{c7}, yield
	\begin{align*}
		\int_{\partial K} &(z\cdot \nu_K(z)) (\rho\lsub{K,z}(u)+\varepsilon)^{q-1} d\hm(z)
		=\int_K \text{div}(z(\rho\lsub{K,z}(u)+\varepsilon)^{q-1}) dz \\
		&=\int_K \big(n (\rho\lsub{K,z}(u)+\varepsilon)^{q-1} +(q-1)(\rho\lsub{K,z}(u)+\varepsilon)^{q-2}
		z\cdot \nabla \rho\lsub{K,z}(u)\big)\, dz \\
		&=\int_K \Big((n+q-1) (\rho\lsub{K,z}(u)+\varepsilon)^{q-1}
		-(q-1)\varepsilon(\rho\lsub{K,z}(u)+\varepsilon)^{q-2} \\
		&\hspace{150pt} -(q-1)(\rho\lsub{K,z}(u)+\varepsilon)^{q-2}
		\frac{y\cdot \nu_K(y)}{u\cdot\nu_K(y)}\Big)\, dz.
	\end{align*}
	We integrate over $u\in\sn$, apply Lemma \ref{I3}, and obtain
	\begin{align*}
		\int_{\sn}\int_{\partial K} &(z\cdot \nu_K(z)) (\rho\lsub{K,z}(u)+\varepsilon)^{q-1} d\hm(z) du \\
		&=\int_{\sn}\int_K \big((n+q-1) (\rho\lsub{K,z}(u)+\varepsilon)^{q-1}
		-(q-1)\varepsilon(\rho\lsub{K,z}(u)+\varepsilon)^{q-2}\big) dzdu \\
		&\hspace{27pt}  - \int_{\partial K} \int_{\sn} y\cdot\nu_K(y)
		(\rho\lsub{K,y}(v)+\varepsilon)^{q-1} dv d\hm(y)
		+ \varepsilon^{q-1} n^2\omega_n V(K).
	\end{align*}
	Therefore,
	\begin{multline*}
		2 \int_{\sn}\int_{\partial K} (z\cdot \nu_K(z)) (\rho\lsub{K,z}(u)+\varepsilon)^{q-1} d\hm(z) du  \\
		=\int_{\sn}\int_K \big((n+q-1) (\rho\lsub{K,z}(u)+\varepsilon)^{q-1}\\
		-(q-1)\varepsilon(\rho\lsub{K,z}(u)+\varepsilon)^{q-2}\big)\, dz du
		+ \varepsilon^{q-1} n^2\omega_n V(K).
	\end{multline*}
	Since $S_z^+$ is a   hemisphere (not necessarily open or closed) for almost all $z\in\partial K$, it follows that
	$\rho\lsub{K,z}(u)=0$ when $u\in S_z\setminus S_z^+$. Thus,
	\begin{multline*}
		\int_{\sn}\int_{\partial K} (z\cdot \nu_K(z)) (\rho\lsub{K,z}(u)+\varepsilon)^{q-1} d\hm(z) du  \\
		=\int_{\partial K}\int_{S_z^+} (z\cdot \nu_K(z)) (\rho\lsub{K,z}(u)+\varepsilon)^{q-1} du d\hm(z)
		+ \varepsilon^{q-1} \frac{n^2\omega_n}2 V(K).
	\end{multline*}
	The desired equation follows.
\end{proof}

\begin{lemm}\label{I7}
	If $K\in\kno$,
	then for all $q>0$,
	\begin{align}
		I_q(K) &= \frac {2q}{(n+q-1)n\omega_n} \int_{\partial K} \int_{S_z^+}
		(z\cdot \nu_K(z)) \rho\lsub{K,z}(u)^{q-1} dud\hm(z) \\
		&=\frac {2q}{(n+q-1)\omega_n} \int_{\partial K} (z\cdot \nu_K(z))
		\wt V_{q-1}(K,z)\, d\hm(z). \label{c10}
	\end{align}
\end{lemm}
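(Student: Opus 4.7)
The plan is to combine the integral representation of $I_q(K)$ from Lemma \ref{I1} with the boundary-interior identity of Lemma \ref{I6}, and then send the regularization parameter $\varepsilon$ to $0^+$ using Lemma \ref{I5}. The second equality in \eqref{c10} will then follow immediately from unpacking the definition of $\widetilde V_{q-1}(K,z)$ on $\partial K$, since for $z\in\partial K$ we have $\widetilde V_{q-1}(K,z)=\widetilde V_{q-1}^+(K,z)=\frac{1}{n}\int_{S_z^+}\rho\lsub{K,z}(u)^{q-1}\,du$ by \eqref{2def} and \eqref{c2}, so the inner spherical integral in the first expression is exactly $n\,\widetilde V_{q-1}(K,z)$.

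The concrete scheme is as follows. By Lemma \ref{I1}, $I_q(K)=\frac{q}{\omega_n}\int_K \widetilde V_{q-1}(K,z)\,dz$, and switching the order of integration with the definition $\widetilde V_{q-1}(K,z)=\frac1n\int_{\sn}\rho\lsub{K,z}(u)^{q-1}\,du$ (valid for $z\in K$ and $q>0$ by \eqref{c2.1}) rewrites this as
\begin{equation*}
I_q(K)=\frac{q}{n\omega_n}\int_{\sn}\int_K \rho\lsub{K,z}(u)^{q-1}\,dz\,du.
\end{equation*}
Lemma \ref{I6} asserts, with $\varepsilon>0$, that the boundary integral $\int_{\partial K}\int_{S_y^+}(y\cdot\nu_K(y))(\rho\lsub{K,y}(v)+\varepsilon)^{q-1}\,dv\,d\hm(y)$ equals $\frac{n+q-1}{2}\int_{\sn}\int_K(\rho\lsub{K,z}(u)+\varepsilon)^{q-1}\,dz\,du$ plus an error term that is exactly of the form controlled by Lemma \ref{I5}. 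Thus, once the limit $\varepsilon\to 0^+$ is taken, the error term vanishes, and both remaining integrals converge to their $\varepsilon=0$ analogues. Rearranging then gives the first equality of the lemma.

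The main technical point I expect to be delicate is justifying the passage to the limit under the integrals, particularly when $0<q<1$, where $(\rho+\varepsilon)^{q-1}$ is unbounded as $\rho\to 0$. On the interior integral in $z\in K$ this is straightforward by monotone convergence: $(\rho\lsub{K,z}(u)+\varepsilon)^{q-1}$ is monotone in $\varepsilon$, and the pointwise limit $\rho\lsub{K,z}(u)^{q-1}$ has finite total integral by Lemma \ref{I11} combined with the rewriting above. On the boundary integral the same monotone-convergence argument applies pointwise in $(y,v)$, with the weight $y\cdot\nu_K(y)\ge 0$ posing no issue; finiteness of the limiting integral is then forced by the already established equality with the interior integral, so no a priori estimate is needed. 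For $q\ge 1$ the integrand is bounded, so dominated convergence applies directly.

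Putting everything together: take the $\varepsilon\to 0^+$ limit in Lemma \ref{I6}, drop the Lemma \ref{I5} term, solve for the double integral over $\sn\times K$, and substitute into the expression for $I_q(K)$ above. This yields the first displayed equality in \eqref{c10}. The second equality then follows by using \eqref{c2.1}/\eqref{c2} to replace the inner integral $\int_{S_z^+}\rho\lsub{K,z}(u)^{q-1}\,du$ by $n\,\widetilde V_{q-1}(K,z)$, which converts the factor $\frac{2q}{(n+q-1)n\omega_n}$ into $\frac{2q}{(n+q-1)\omega_n}$. The only remaining obstacle, the $\varepsilon\to 0^+$ convergence, is handled as above, and the proof is complete.
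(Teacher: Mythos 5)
Your proposal follows exactly the same path as the paper's own proof: write $I_q(K)$ as the interior double integral via Lemma \ref{I1} and \eqref{c2.1}, apply Lemma \ref{I6} to replace it by the regularized boundary integral plus an error term, kill the error term with Lemma \ref{I5}, and pass to the limit $\varepsilon\to 0^+$ by bounded (resp.\ monotone) convergence for $q\ge 1$ (resp.\ $0<q<1$), then unpack $\widetilde V_{q-1}$ via \eqref{c2} for the second equality. The reasoning, the lemmas invoked, and the handling of the $0<q<1$ case all match the paper; the only difference is your explicit appeal to Lemma \ref{I11} for finiteness, which the paper leaves implicit in its final chain of equalities.
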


\begin{proof} Since $\rho\lsub{K,z}(u)^{q-1}$ is bounded when $q\ge 1$,   we have
	\begin{align*}
		\lim_{\varepsilon\to 0^+} &\int_{\partial K}\int_{S_z^+} (z\cdot \nu_K(z))
		(\rho\lsub{K,z}(u)+\varepsilon)^{q-1} d\hm(z) du\\
		& \hspace{55pt}=\int_{\partial K}\int_{S_z^+} (z\cdot \nu_K(z)) \rho\lsub{K,z}(u)^{q-1} d\hm(z) du, \\
		\lim_{\varepsilon\to 0^+} &\int_{\sn}\int_K (\rho\lsub{K,z}(u)+\varepsilon)^{q-1} dzdu
		=\int_{\sn}\int_K \rho\lsub{K,z}(u)^{q-1} dzdu.
	\end{align*}
	When $0<q<1$, observe that $(\rho\lsub{K,z}(u)+\varepsilon)^{q-1}$ is increasing as  $\varepsilon \downarrow  0$. Thus
	by the monotone convergence theorem, the two limits above still hold.

	Then Lemmas \ref{I5}, \ref{I6} and \ref{I1} give that
	\begin{align*}
		\int_{\partial K}\int_{S_z^+} (z\cdot \nu_K(z)) \rho\lsub{K,z}(u)^{q-1} d\hm(z) du
		&=\frac{n+q-1}2 \int_{\sn}\int_K \rho\lsub{K,z}(u)^{q-1} dzdu \\
		&=\frac{(n+q-1)n}2 \int_K \wt V_{q-1}(K, z)\, dz \\
		&=\frac{(n+q-1)n\omega_n}{2q} I_q(K) \\
		&<\infty.
	\end{align*}
	This and \eqref{c2} give \eqref{c10}.
\end{proof}

An immediate consequence of Lemma \ref{I7} is the following lemma.

\begin{lemm}\label{I8}
	If $K\in\kn$ and $q>0$, then $\rho\lsub{K,z}(u)^{q-1}$ is integrable over
	\[
	D^+ =\{(z,u) \in \partial K \times \sn : \rho\lsub{K,z}(u)>0\},
	\]
	and $\wt V_{q-1}(K, \cdot)$ is integrable over
	$\partial K$.
\end{lemm}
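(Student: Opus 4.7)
The plan is to reduce to the setting of Lemma \ref{I7} by translation, and then extract the integrability from the estimate that Lemma \ref{I7} already produces. Given $K\in\kn$, pick an interior point $z_0\in \Int K$ and set $K'=K-z_0\in\kno$. The identity \eqref{c1.0} gives $\rho\lsub{K,z}(u)=\rho\lsub{K',z-z_0}(u)$ for every $z$ and $u$, so the $\hm$-preserving map $z\mapsto z-z_0$ sends $D^+(K)$ bijectively onto $D^+(K')$ while preserving the integrand $\rho\lsub{K,z}(u)^{q-1}$. It therefore suffices to prove both assertions under the extra hypothesis $K\in\kno$.

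Assume then that $K\in\kno$, and choose $r>0$ with $rB^n\subset K$. For $\hm$-a.e.\ $z\in\partial K$ the Gauss map $\nu_K(z)$ is defined, and the corresponding supporting hyperplane must support $rB^n$, so
\[
z\cdot\nu_K(z)=h_K(\nu_K(z))\ge r.
\]
Lemma \ref{I7} asserts that the integral $\int_{\partial K}\int_{S_z^+}(z\cdot\nu_K(z))\rho\lsub{K,z}(u)^{q-1}\,du\,d\hm(z)$ is finite, being proportional to $I_q(K)$, which is finite by Lemma \ref{I11}. Combining this with the lower bound yields
\[
r\int_{\partial K}\!\int_{S_z^+}\rho\lsub{K,z}(u)^{q-1}\,du\,d\hm(z)\ \le\ \int_{\partial K}\!\int_{S_z^+}(z\cdot\nu_K(z))\rho\lsub{K,z}(u)^{q-1}\,du\,d\hm(z)\ <\ \infty,
\]
which is exactly the integrability of $\rho\lsub{K,z}(u)^{q-1}$ on $D^+$.

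The second assertion then follows from Tonelli's theorem applied to this non-negative integrand. Because $z\in\partial K\subset K$ forces $S_z^-=\varnothing$, the definitions in \eqref{2def}--\eqref{c2} give $\wt V_{q-1}(K,z)=\frac{1}{n}\int_{S_z^+}\rho\lsub{K,z}(u)^{q-1}\,du$ for each $z\in\partial K$, so interchanging the order of integration produces
\[
\int_{\partial K}\wt V_{q-1}(K,z)\,d\hm(z)=\frac{1}{n}\int_{D^+}\rho\lsub{K,z}(u)^{q-1}\,du\,d\hm(z)<\infty.
\]
The only step that really requires care is the translation reduction; once $K\in\kno$, the argument is just the positivity of $z\cdot\nu_K(z)$ coupled with Tonelli's theorem.
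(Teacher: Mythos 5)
Your proof is correct and follows essentially the same route as the paper's: translate to reduce to $K\in\kno$, use the lower bound $z\cdot\nu_K(z)\ge c>0$ together with the finiteness from Lemma \ref{I7}, and then pass from the iterated integral over $D^+$ to the integrability of $\wt V_{q-1}(K,\cdot)$. You merely place the translation-invariance reduction at the front and invoke Tonelli explicitly, whereas the paper assumes $0\in\Int K$ at the outset and notes translation invariance as a closing remark.
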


\begin{proof}
	First, assume that the origin is in the interior of $K$. Then there is a constant $c>0$ so that
	$z\cdot \nu_K(z)>c$, for all $z\in\partial K$. Thus, by Lemma \ref{I7},
	\[
	c \int_{\partial K} \int_{S_z^+} \rho\lsub{K,z}(u)^{q-1} \, d\hm(z) du \le
	\int_{\partial K} \int_{S_z^+} \rho\lsub{K,z}(u)^{q-1} (z\cdot \nu_K(z)) \, d\hm(z) du <\infty.
	\]
	It follows that $\wt V_{q-1}(K, z)$ is integrable over
	$\partial K$ and
	\[
	\int_{D^+} \rho\lsub{K,z}(u)^{q-1} \, dz du
	= \int_{\partial K} \int_{S_z^+} \rho\lsub{K,z}(u)^{q-1} \, d\hm(z) du <\infty.
	\]
	Note that the above integrals are translation invariant.
\end{proof}

We are now able to obtain Theorem \ref{I9} for the case where the body is in $\kno$. This
will allow us to establish the property that  the centroid of each chord measure is the origin.

\begin{theo}\label{I9++} If $K\in \kno$  and $q>0$, then  the Borel measure $F_q(K,\cdot)$ is finite and
	\begin{equation}\label{c14++}
		I_q(K) = \frac1{n+q-1} \int_{\sn} h_K(v)\, dF_q(K,v) = \int_{\sn} dG_q(K,v).
	\end{equation}
\end{theo}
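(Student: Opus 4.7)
The plan is to realize Theorem \ref{I9++} as a direct consequence of the machinery already assembled: Lemma \ref{I8} provides finiteness, Lemma \ref{I7} provides the integral representation of $I_q(K)$ over $\partial K$, and the integration formula \eqref{XX1} converts that boundary integral into a spherical one against $dF_q(K,\cdot)$.

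First I would dispense with finiteness. By Lemma \ref{I8}, the function $\wt V_{q-1}(K,\cdot)$ is integrable over $\partial K$, and the definition \eqref{c11} gives
\[
F_q(K,\sn) = \frac{2q}{\omega_n}\int_{\partial K}\wt V_{q-1}(K,z)\,d\hm(z) < \infty,
\]
so $F_q(K,\cdot)$ is a finite Borel measure on $\sn$.

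Next, for the first equality in \eqref{c14++}, I would apply formula \eqref{XX1} with the choice $g=h_K$, a continuous (hence bounded) function on $\sn$. Since $h_K(\nu_K(z)) = z\cdot\nu_K(z)$ for each $z\in\partial'\negthinspace K$ (where the Gauss map is defined), and $\hm(\sigma_K)=0$, this gives
\[
\int_{\sn} h_K(v)\,dF_q(K,v)
= \frac{2q}{\omega_n}\int_{\partial K} \wt V_{q-1}(K,z)\,(z\cdot\nu_K(z))\,d\hm(z).
\]
Comparing with the identity \eqref{c10} of Lemma \ref{I7}, the right-hand side is precisely $(n+q-1)\,I_q(K)$, which yields
\[
I_q(K) = \frac{1}{n+q-1}\int_{\sn} h_K(v)\,dF_q(K,v).
\]

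Finally, for the second equality, I would invoke \eqref{cm2}, which reads $dG_q(K,\cdot) = \frac{1}{n+q-1} h_K\, dF_q(K,\cdot)$; integrating over $\sn$ gives
\[
\int_{\sn} dG_q(K,v) = \frac{1}{n+q-1}\int_{\sn} h_K(v)\,dF_q(K,v) = I_q(K).
\]
There is essentially no obstacle at this stage --- all the analytic heavy lifting (the divergence theorem argument, the $\varepsilon\to 0^+$ limits controlling the singularity of $\rho\lsub{K,z}(u)^{q-1}$ when $0<q<1$, and the integrability of $\wt V_{q-1}(K,\cdot)$ on $\partial K$) has already been carried out in Lemmas \ref{I3}--\ref{I8}. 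The only subtle point to verify is that $h_K(\nu_K(z)) = z\cdot\nu_K(z)$ holds $\hm$-a.e.\ on $\partial K$, which is standard since it follows from the definition of the support function applied to the boundary point $z\in H_K(\nu_K(z))$.
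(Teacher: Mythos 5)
Your proof is correct and takes essentially the same route as the paper: both deduce finiteness of $F_q(K,\cdot)$ from Lemma \ref{I8}, and both derive \eqref{c14++} from Lemma \ref{I7} (i.e., \eqref{c10}) together with the definitions \eqref{c11} and \eqref{c11.1} (equivalently, the change-of-variables identities \eqref{XX1}--\eqref{XX2} and the relation \eqref{cm2}). The only cosmetic difference is the order in which the two equalities are obtained.
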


\begin{proof}
	From Lemma \ref{I8} we know that $\wt V_{q-1}(K, \cdot)$ is integrable over
	$\partial K$ when $q>0$. This and \eqref{c11} imply that $F_q(K,\cdot)$ is
	a finite Borel measure on $\sn$.
	Now \eqref{c14++} follows from \eqref{c10}, \eqref{c11} and \eqref{c11.1}.
\end{proof}

The following corollary says that the centroid of each chord measure is the origin
which is therefore known to be a necessary condition for a measure
to be the chord measure of some convex body.

\begin{coro} \label{I9.1}
	Let $K\in\kn$. For $q>0$, the chord measure $F_q(K,\cdot)$ satisfies
	\begin{equation}\label{c14.1}
		\int_{\sn} v\, dF_q(K,v) = 0.
	\end{equation}
\end{coro}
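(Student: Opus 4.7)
My plan is to derive the vanishing first moment of $F_q(K,\cdot)$ directly from the integral representation of $I_q$ in Theorem \ref{I9++}, combined with the translation invariance of both $F_q$ and $I_q$. The proof is essentially a one-line variational argument disguised as a translation computation.

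First, I would reduce to the case $K\in\kno$. Since $F_q(K,\cdot)$ is translation invariant by Remark (d) above, we may translate $K$ so that it contains the origin in its interior; the measure $F_q(K,\cdot)$ is unchanged, so it suffices to verify \eqref{c14.1} under this assumption. Next, fix $y\in\rn$ small enough that $K-y\in\kno$ as well. The chord integral is manifestly translation invariant from its definition \eqref{c4}, since $|(K-y)\cap\ell| = |K\cap(\ell+y)|$ and Haar measure on $\Ln$ is translation invariant; hence $I_q(K-y)=I_q(K)$. Likewise $F_q(K-y,\cdot)=F_q(K,\cdot)$, while the support function transforms as $h_{K-y}(v)=h_K(v)-y\cdot v$ for all $v\in\sn$.

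Now I apply Theorem \ref{I9++} to both $K$ and $K-y$. This gives
\[
(n+q-1)I_q(K)=\int_{\sn}h_K(v)\,dF_q(K,v),
\]
and
\[
(n+q-1)I_q(K-y)=\int_{\sn}\bigl(h_K(v)-y\cdot v\bigr)\,dF_q(K,v).
\]
Subtracting and using $I_q(K-y)=I_q(K)$ yields
\[
0=y\cdot\int_{\sn}v\,dF_q(K,v).
\]
Since this holds for every $y$ in some open neighborhood of the origin, hence by linearity for every $y\in\rn$, the vector $\int_{\sn}v\,dF_q(K,v)$ must be zero.

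There is no substantive obstacle here; the entire content of the corollary is packaged into the translation invariance of $F_q$ (Remark (d)) and the Minkowski-type variational formula \eqref{c14++} already established in Theorem \ref{I9++}. The only point that requires a sentence of care is the initial reduction, which is justified because both the hypothesis ($K\in\kn$) and the conclusion depend only on the translation class of $K$ through the translation-invariant measure $F_q(K,\cdot)$.
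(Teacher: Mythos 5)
Your proof is correct and takes essentially the same route as the paper: both reduce to $K\in\kno$ by translation invariance of $F_q$, apply Theorem \ref{I9++} to a translate of $K$, and use translation invariance of $I_q$ and $F_q$ together with the affine transformation of the support function to cancel the $h_K$ terms and isolate $\int_{\sn}v\,dF_q(K,v)$. The paper translates by $+\lambda_0 u$ with $u\in\sn$ ranging over all directions, whereas you translate by $-y$ for small $y$; this is a cosmetic difference only.
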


\begin{proof}
	Suppose firstly that
	$K\in\kno$. Let $u\in\sn$, and let $\lambda_0>0$ be such that
	$K+\lambda_0 u \in \kno.$
	By Theorem \ref{I9++},
	and the translation invariance of $F_q$,  we have
	\begin{align*}
		\int_{\sn} \lambda_0 u\cdot v\, dF_q(K,v)
		&= \int_{\sn} (h_K(v) +\lambda_0 u\cdot v -h_K(v))\, dF_q(K,v) \\
		&=\int_{\sn}  h_{K+\lambda_0 u}(v)\, dF_q(K+\lambda_0 u,v) - \int_{\sn} h_K(v)\, dF_q(K,v) \\
		&=(n+q-1)I_q(K+\lambda_0 u) - (n+q-1)I_q(K)\\
		&=0.
	\end{align*}
	Since $u\in \sn$ is arbitrary and $\lambda_0$ is positive, we get
	\[\int_{\sn} v\, dF_q(K,v) = 0.\]
	Since this holds for all $K\in \kno$,
	from the fact that $F_q(K,\cdot)$
	is invariant under translations of $K$,
	the conclusion follows for all $K\in\kn$.
\end{proof}

\vspace{3pt}

It is easy to see that for every $K\in\kn$, and every $q>0$,
the surface area measure, $S(K,\cdot)$, is absolutely continuous with respect to the chord measure $F_q(K,\cdot)$. This and the well-known fact (see, e.g. Schneider \cite{S14}) that the surface area measure of a body $K\in\kn$ cannot be concentrated in a closed hemisphere of $\sn$, shows that the same must be the case with the chord measure $F_q(K,\cdot)$. We combine this observation and Corollary \ref{I9.1} in one lemma.

\begin{lemm}\label{Izzz}
	Suppose $q>0$, and $\mu$ is a Borel measure on $\sn$. In order for there to exist a body $K\in\kn$ satisfying the equation
	\[
	F_q(K,\cdot) = \mu,
	\]
	it is necessary that $\mu$ not be concentrated on any hemisphere of $\sn$, and that
	\[
	\int_{\sn} u\,d\mu(u) = 0.
	\]
\end{lemm}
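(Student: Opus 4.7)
The plan is to observe that this lemma is essentially a bookkeeping consequence of two previously established facts, namely Corollary \ref{I9.1} (which gives the centroid condition directly) together with the absolute continuity relation $S(K,\cdot)\ll F_q(K,\cdot)$ flagged in the paragraph preceding the statement. So I would write a short, two-paragraph proof.

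First I would dispense with the centroid condition: if $\mu=F_q(K,\cdot)$ for some $K\in\kn$, then Corollary \ref{I9.1} immediately yields $\int_{\sn}u\,d\mu(u)=0$, with no further work required.

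Next I would verify the non-concentration condition by establishing that $S(K,\cdot)$ is absolutely continuous with respect to $F_q(K,\cdot)$. From the definition \eqref{c11}, if $F_q(K,\eta)=0$ for a Borel $\eta\subset\sn$, then $\wt V_{q-1}(K,z)=0$ for $\hm$-almost every $z\in\nu_K^{-1}(\eta)$. But for $\hm$-almost every $z\in\partial K$ the set $S_z^+$ is a hemisphere on which the radial function $\rho\lsub{K,z}(u)$ is strictly positive, so by \eqref{2def} and \eqref{c2} one has $\wt V_{q-1}(K,z)>0$ for $q>0$. Hence $\hm(\nu_K^{-1}(\eta))=0$, i.e.\ $S(K,\eta)=0$, which is exactly $S(K,\cdot)\ll F_q(K,\cdot)$. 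The classical fact (see Schneider \cite{S14}) that the surface area measure of a convex body cannot be concentrated on a closed hemisphere of $\sn$ then transfers to $F_q(K,\cdot)=\mu$.

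There is no real obstacle here; the only subtlety is checking positivity of $\wt V_{q-1}(K,z)$ at $\hm$-almost every boundary point, which is why I would insert the one-line justification invoking \eqref{2def} rather than just quoting the definition. The structure of the proof mirrors the informal argument already sketched by the authors in the paragraph immediately preceding Lemma \ref{Izzz}.
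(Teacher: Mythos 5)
Your proposal is correct and follows exactly the route the paper takes: the authors establish this lemma informally in the paragraph immediately preceding its statement, citing Corollary \ref{I9.1} for the centroid condition and the absolute continuity $S(K,\cdot)\ll F_q(K,\cdot)$ together with Schneider's non-concentration fact for the other; no separate proof environment is given. Your only addition is to make explicit the one step the paper leaves implicit, namely that $\wt V_{q-1}(K,z)>0$ for $\hm$-a.e.\ $z\in\partial K$ when $q>0$, which is a sound and welcome clarification rather than a deviation.
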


\begin{proof}[\bf Proof of Theorem \ref{I9}]
	Since by Remark 4.2 (d), we know that $F_q(K,\cdot)$ is invariant under translations of $K$, by Theorem \ref{I9++}, we see that $F_q(K,\cdot)$ is a finite Borel measure on $\sn$.
	
	Choose a $z_0\in K$ so that $-z_0 + K \in\kno$.  By  \eqref{c14++}, and the fact that $I_q(K)$ and $F_q(K,\cdot)$ are invariant under translations of $K$, we have
	\begin{align*}I_q(K) &= \frac1{n+q-1} \int_{\sn} h_{K-z_0}(v) dF_q(K,v) \\
		&= \frac1{n+q-1} \int_{\sn} h_K(v) dF_q(K,v)- \frac1{n+q-1} \int_{\sn} z_0\cdot v dF_q(K,v).\end{align*}
	Now, formula \eqref{c14} follows from the equation above and Corollary \ref{I9.1}. Formula \eqref{c14+} follows from \eqref{c14}   and \eqref{c11.1}.
\end{proof}

\vspace{5pt}

The following proposition contains the surprising connection between the dual quermassintegrals of a convex body and
the mean curvature of the boundary of the body.

\begin{prop}\label{I10}
	Suppose $K\in\kn$ and $q>0$. If $\partial K$ is second order differentiable
	at $z\in\partial K$, then
	\begin{equation}\label{c15}
		\lim_{q \to 0^+} q \wt V_{q-1}(K,z) = \frac{(n-1)\omega_{n-1}}{2n} H(z),
	\end{equation}
	where $H(z)$ is the mean curvature of $\partial K$ at $z$.
\end{prop}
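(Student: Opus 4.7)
The plan is to evaluate the limit by directly analyzing the integral
$$q\wt V_{q-1}(K,z) = \frac{q}{n}\int_{S_z^+}\rho\lsub{K,z}(u)^{q-1}\,du$$
via a Laplace-type asymptotic, observing that since $q-1<0$ for small $q$, the dominant contribution comes from directions $u$ near-tangential to $\partial K$ at $z$, where $\rho\lsub{K,z}(u)$ is small.

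After translating so that $z=0$ and rotating so that the inward unit normal $\nu=-\nu_K(z)$ is $e_n$, the second-order differentiability of $\partial K$ at $z$ gives a local representation
$$\partial K:\ x_n = Q(x') + o(|x'|^2),\qquad Q(x')=\tfrac{1}{2}\sum_{i=1}^{n-1}\kappa_i x_i^2,$$
where $\kappa_1,\dots,\kappa_{n-1}$ are the principal curvatures. I would parametrize $u\in S_z^+$ by
$$u = \cos\phi\,\omega + \sin\phi\,\nu,\qquad \phi\in(0,\pi/2],\ \omega\in S^{n-2}\subset\nu^\perp,$$
so that $du = \cos^{n-2}\phi\,d\phi\,d\omega$. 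Solving $\rho\sin\phi = Q(\rho\cos\phi\,\omega) + o(\rho^2)$ for the positive root produces the key asymptotic
$$\rho\lsub{K,z}(u) = \frac{2\sin\phi}{\cos^2\phi\,\kappa(\omega)}\bigl(1+o(1)\bigr)\quad\text{as }\phi\to 0^+,$$
where $\kappa(\omega)=\sum_i\kappa_i\omega_i^2$ is the normal curvature in direction $\omega$.

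Next I would split the integral at some small $\delta>0$. On $\phi\in(\delta,\pi/2]$ one has $\rho\lsub{K,z}\ge\rho_{\min}(\delta)>0$, so $\rho^{q-1}\le\rho_{\min}^{q-1}$ is bounded and the prefactor $q$ kills that piece in the limit. On the near-tangential zone $\phi\in(0,\delta)$ I would insert the asymptotic and use the elementary identity $q\int_0^\delta\phi^{q-1}\,d\phi=\delta^q\to 1$ as $q\to 0^+$, together with $(2/\kappa(\omega))^{q-1}\to\kappa(\omega)/2$, to reach
$$\lim_{q\to 0^+}q\wt V_{q-1}(K,z) = \frac{1}{n}\int_{S^{n-2}}\frac{\kappa(\omega)}{2}\,d\omega.$$
Evaluating $\int_{S^{n-2}}\omega_i^2\,d\omega=\omega_{n-1}$ (by symmetry and $|S^{n-2}|=(n-1)\omega_{n-1}$) yields
$$\int_{S^{n-2}}\kappa(\omega)\,d\omega = \omega_{n-1}\sum_{i=1}^{n-1}\kappa_i = (n-1)\omega_{n-1}\,H(z),$$
giving the claimed value $\dfrac{(n-1)\omega_{n-1}}{2n}H(z)$.

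The main obstacle is making the pointwise asymptotic $\rho\sim\frac{2\sin\phi}{\cos^2\phi\,\kappa(\omega)}$ quantitative enough to justify exchanging limit and integral, especially when some $\kappa_i$ vanish so that $\kappa(\omega)$ is arbitrarily small on a subset of $S^{n-2}$ and the formal asymptotic diverges. I would handle this by choosing $\delta$ so that the error in the second-order Taylor expansion of the boundary function is controlled, by using the \emph{a priori} bound $\rho\lsub{K,z}(u)\le\mathrm{diam}(K)$ to dominate near the (lower-dimensional) set $\{\kappa(\omega)=0\}\subset S^{n-2}$, and by invoking dominated convergence on $S^{n-2}\setminus\{\kappa=0\}$ where the integrand $(2/\kappa(\omega))^{q-1}$ is monotone in $q$ for $q$ near $0$, thus controlled by a fixed integrable envelope.
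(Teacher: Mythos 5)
Your plan is sound and arrives at the correct constant, but you take a genuinely different parametrization from the paper, and the technical obstacle you flag at the end is handled more cleanly by the paper's route. You work in the solid-angle coordinates $(\phi,\omega)$ for $u\in S_z^+$ and resolve $\rho\lsub{K,z}(u)$ asymptotically as $\phi\to 0^+$. The paper instead invokes the infinitesimal cone-volume identity $\rho\lsub{K,z}(u)^n\,du = |y\cdot\nu_K(y)|\,d\hm_{\partial K}(y)$ (with $y=\rho\lsub{K,z}(u)u$), which transports the integral from $S_z^+$ to the boundary patch $\{y'\}$, where $\rho\lsub{K,z}(u)=(|y'|^2+f(y')^2)^{1/2}$ and $|y\cdot\nu_K(y)|\,dy'=(f(y')+o(|y'|^2))\,dy'$. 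Passing to $y'=rv'$ turns the radial factor into $r^{q-1}(1+O(r^2))$ and the angular weight into $\tfrac12\kappa(v')+o(1)$ with the little-$o$ \emph{uniform} in $v'$, because it comes from the single Taylor remainder $o(|y'|^2)$ furnished by second-order differentiability. The Laplace limit is then immediate. Your route produces the same $S^{n-2}$-integrand, but only after an asymptotic for $\rho$ itself whose error is not uniform in $\omega$.

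That non-uniformity near $\{\kappa(\omega)=0\}$ is the genuine gap, and your proposed patch does not close it. The a priori bound $\rho\lsub{K,z}(u)\le\operatorname{diam}(K)$ is an \emph{upper} bound on $\rho$, hence a \emph{lower} bound on $\rho^{q-1}$ (since $q-1<0$), which goes the wrong way for a dominating envelope. The monotonicity of the formal quantity $(2/\kappa(\omega))^{q-1}$ in $q$ controls only the limiting value of the leading term, not the actual $\rho^{q-1}$ where the asymptotic $\rho\sim 2\phi/\kappa(\omega)$ has ceased to be valid (which happens whenever $\phi$ is not small compared to $\kappa(\omega)$). What you actually need is a uniform \emph{lower} bound of the form $\rho\lsub{K,z}(u)\ge c\sin\phi$ for some $c=c(K,z)>0$, which convexity supplies (inscribe a cone with apex $z$ over an interior ball of $K$). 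With $\rho^{q-1}\le(c\sin\phi)^{q-1}$ in hand, the weight $q\int_0^\delta(c\sin\phi)^{q-1}\cos^{n-2}\phi\,d\phi$ is uniformly bounded, and combined with a direct estimate of the error in the asymptotic on each compact subset of $\{\kappa>0\}$ one can justify the interchange of $\lim_{q\to 0^+}$ and $\int_{S^{n-2}}$. This is all doable, but markedly more delicate than the paper's change of variables, which concentrates the entire geometric input in the uniform remainder $o(|y'|^2)$ and never asks for an asymptotic of $\rho$ itself.
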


\begin{proof}
	Let $\kappa_1,\ldots,\kappa_{n-1}$ denote the principal curvatures of $K$ at $z$.
	We now choose a coordinate system where our origin is at $z$ and where the basis vector $e_n=-\nu_K(z)$ and where the basis vectors $e_1,\ldots,e_{n-1}$ are eigenvectors of the Weingarten map of $K$ at $z$ corresponding to the eigenvalues
	$\kappa_1,\ldots,\kappa_{n-1}$. The basis vectors $e_1,\ldots,e_{n}$  are chosen to be orthogonal (since the Weingarten map is self-adjoint). It will be convenient to choose them to be orthonormal.
	
	Thus, in a neighborhood of $z$, the graph of $\partial K$
	may be written as
	\[
	y_n=f(y') =\frac12( \kappa_1
	{y'}_{\negthinspace\negthinspace 1}^2 + \cdots +
	\kappa_{n-1}
	{y'}_{\negthinspace\negthinspace n-1}^2) + o(|y'|^2),
	\]
	where ${y'}_{\negthinspace\negthinspace i} = y'\cdot e_i$.
	For $u\in\sn$, let $y= \rho\lsub{K,z}(u)u \in \partial K$, and let $dy$ be the surface area element of $\partial K$,
	while $dy'$ is the volume element in $\mathbb R^{n-1}$. Then, we have
	\[
	u=\frac1{\rho\lsub{K,z}(u)}(y', f), \ \ \ \nu_K(y)=\frac{(\nabla f, -1)}
	{(1+|\nabla f|^2)^\frac12},
	\ \ \ dy =  (1+|\nabla f|^2)^\frac12 dy'.
	\]
	With $du$ representing the surface area element of $\sn$,
	we also have
	\[
	\rho\lsub{K,z}(u)^n du = |y\cdot \nu_K(y)| dy, \ \ \ y'\negthinspace\cdot\negthinspace \nabla f = 2f(y') + o(|y'|^2).
	\]
	Thus,
	\[
	\rho\lsub{K,z}(u)^n du =|(y', f)\cdot (\nabla f, -1)| dy' =|y'\negthinspace\cdot\negthinspace \nabla f - f(y')| dy'
	= (f(y')+o(|y'|^2))dy'.
	\]
	Since $q \mapsto  \rho\lsub{K,z}(u)^{q-1}$ is bounded outside of a neighborhood of $z$, it is sufficient to restrict our attention to
	the subset $\omega(\delta) \subset S_z^+$ that corresponds to $|y'|\le \delta$, where $\delta>0$ is sufficiently small. Here,
	\begin{align*}
		&\lim_{q\to 0^+} q\int_{S_z^+} \rho\lsub{K,z}(u)^{q-1}\, du   \\=&
		\lim_{q\to 0^+} q\int_{\omega(\delta)} \rho\lsub{K,z}(u)^{q-1}\, du
		+ \lim_{q\to 0^+} q\int_{(S_z^+)\setminus\omega(\delta)} \rho\lsub{K,z}(u)^{q-1}\, du \\
	 =&\lim_{q\to 0^+} q\int_{\omega(\delta)} \rho\lsub{K,z}(u)^{q-1}\, du.
	\end{align*}
	
	Let $y'=rv'$, where $r\ge 0$ and $v'=({v'}_{\negthinspace\negthinspace 1},\ldots,{v'}_{\negthinspace\negthinspace {n-1}})\in S^{n-2}$, while
	the normal curvature  $\kappa(v')=\kappa_1 {v'}_{\negthinspace\negthinspace 1}^2 + \cdots + \kappa_{n-1} {v'}_{\negthinspace\negthinspace {n-1}}^2$ by Euler's theorem.
	We have
	
	\begin{align*}
		&q\int_{\omega(\delta)} \rho\lsub{K,z}(u)^{q-1}\, du\\
		=&q\int_{|y'|\le \delta} \rho\lsub{K,z}(u)^{q-n-1} (f(y')+o(|y'|^2))\, dy' \\
		=&q\int_{|y'|\le \delta} (|y'|^2+f^2)^\frac{q-n-1}2 (f(y')+o(|y'|^2))\, dy' \\
		=&q\int_{S^{n-2}}\int_0^\delta \big(r^2+ \frac14 r^4\kappa(v')^2+o(r^4)\big)^\frac{q-n-1}2
		\big(\frac12 r^2\kappa(v)+o(r^2)\big) r^{n-2}drdv' \\
		=&q\int_{S^{n-2}}\int_0^\delta r^{q-1}\big(1+\frac14 r^2\kappa(v')^2+o(r^2)\big)^\frac{q-n-1}2
		\big(\frac12 \kappa(v')+o(1)\big)\, drdv' \\
		=&q\int_{S^{n-2}}\int_0^\delta r^{q-1} \big(1+O(r^2)\big) \big(\frac12 \kappa(v')+o(1)\big)\, drdv'\\
		=&q\int_{S^{n-2}}\int_0^\delta r^{q-1}\big(\frac12 \kappa(v')+o(1)\big)\, drdv' \\
		=&\int_{S^{n-2}}\Big(\frac12 \delta^q \kappa(v') + q\int_0^\delta r^{q-1} o(1)dr\Big) dv',
	\end{align*}
	and thus,
	\[
	\lim_{q \to 0^+}
	q\int_{\omega(\delta)} \rho\lsub{K,z}(u)^{q-1}\, du
	\ =\
	\frac12 \int_{S^{n-2}} \kappa(v')\, dv'
	\ =\
	\frac12 H(z) (n-1)\omega_{n-1}.
	\]
\end{proof}

\begin{coro}
	Let $K\in\kn$. If $\partial K$ is $C^2$ with positive curvature,
	then
	\begin{equation}\label{I12}
		\lim_{q\to 0^+} F_q(K, \eta) = \frac{(n-1)\omega_{n-1}}{n\omega_n} S'(K, \eta),
	\end{equation}
	for each Borel set $\eta \subset \sn$.
\end{coro}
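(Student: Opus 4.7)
The plan is to start from the chord measure formula \eqref{c11}, rewritten as
\[
F_q(K,\eta) \,=\, \frac{2}{\omega_n}\int_{\nu_K^{-1}(\eta)} q\,\wt V_{q-1}(K,z)\, d\hm(z),
\]
and pass $q\to 0^+$ under the integral sign. Proposition \ref{I10} already supplies the pointwise limit of the integrand at every $z\in\partial K$ (the $C^2$ hypothesis makes $\partial K$ second-order differentiable at every boundary point), namely
\[
q\,\wt V_{q-1}(K,z)\ \longrightarrow\ \tfrac{(n-1)\omega_{n-1}}{2n}\,H(z).
\]
Comparing with the integral representation \eqref{0.4.0} of $S'(K,\cdot)$ would then identify the limit as $\tfrac{(n-1)\omega_{n-1}}{n\omega_n}\,S'(K,\eta)$, so the entire matter reduces to justifying the interchange of limit and integral by a dominated convergence argument.

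The main obstacle is producing an integrable dominating function, uniform in $q$ as $q\to 0^+$; what has to be controlled is the singular behavior of $\rho\lsub{K,z}(u)^{q-1}$ near the equator of $S_z^+$. The positive curvature assumption is what saves the day, through a uniform inner-rolling-ball estimate. Because $\partial K$ is $C^2$ with everywhere positive Gauss curvature, compactness together with continuity of the second fundamental form gives a constant $r_0>0$ such that for every $z\in\partial K$ the closed ball of radius $r_0$ centered at $z-r_0\nu_K(z)$ is contained in $K$ and tangent to $\partial K$ at $z$. For each $u\in S_z^+$ this ball contributes a subchord to $K\cap(z+\mathbb R u)$ and yields the lower bound
\[
\rho\lsub{K,z}(u)\ \ge\ 2r_0\,(-u\cdot \nu_K(z)).
\]

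For $0<q<1$ the map $t\mapsto t^{q-1}$ is decreasing on $(0,\infty)$, so
\[
q\,\wt V_{q-1}(K,z)\,=\,\frac{q}{n}\int_{S_z^+}\rho\lsub{K,z}(u)^{q-1}\,du\ \le\ \frac{q(2r_0)^{q-1}}{n}\int_{S_z^+}(-u\cdot\nu_K(z))^{q-1}\,du.
\]
Polar coordinates on $S_z^+$ about the axis $-\nu_K(z)$ rewrite the last integral as $(n-1)\omega_{n-1}\int_0^{\pi/2}(\cos\theta)^{q-1}(\sin\theta)^{n-2}\,d\theta$, and a standard Beta-function computation shows that $q$ times this quantity remains bounded as $q\to 0^+$. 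Thus $q\,\wt V_{q-1}(K,z)$ is dominated on $\partial K$ by a constant, uniformly for $q$ in any right-neighborhood of $0$; since $\hm(\partial K)<\infty$, this constant is integrable, and Lebesgue's dominated convergence theorem then delivers \eqref{I12}.
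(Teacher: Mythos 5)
Your argument is correct and follows the same overall strategy as the paper: invoke Proposition \ref{I10} for the pointwise limit of $q\,\wt V_{q-1}(K,z)$, use Schneider's formula $S'(K,\eta)=\int_{\nu_K^{-1}(\eta)}H\,d\hm$, and pass the limit under the integral in \eqref{c11}. The difference is that the paper's proof simply asserts that this passage is valid, writing ``From this, \eqref{c11}, and Proposition \ref{I10}, we get'' and carrying out the computation without justifying the interchange of limit and integral. You supply that missing justification explicitly: the uniform inner rolling ball of radius $r_0$ (available precisely because $\partial K$ is $C^2$ with positive curvature, by Blaschke's rolling theorem) gives $\rho\lsub{K,z}(u)\ge 2r_0(-u\cdot\nu_K(z))$ on $S_z^+$; for $0<q<1$ the monotonicity of $t\mapsto t^{q-1}$ converts this to an upper bound on $q\,\wt V_{q-1}(K,z)$ by a constant times
\[
q\int_0^{\pi/2}(\cos\theta)^{q-1}(\sin\theta)^{n-2}\,d\theta \;=\; \tfrac{q}{2}\,B\!\left(\tfrac{n-1}{2},\tfrac{q}{2}\right),
\]
which stays bounded as $q\to 0^+$; dominated convergence over the finite measure space $\partial K$ then finishes the argument. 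This is exactly the kind of dominating function one needs, it uses the positive-curvature hypothesis in an essential way (without it $r_0$ could degenerate), and it cleanly complements the paper's terse presentation.

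Two small remarks. First, you correctly point out that the $C^2$ hypothesis means $\partial K$ is second-order differentiable at every boundary point, so Proposition \ref{I10} applies pointwise and no ``almost everywhere'' caveat is needed; since positive curvature also forces $\sigma_K=\varnothing$, the Gauss map is everywhere defined. Second, the restriction to $0<q<1$ in the domination step is harmless since the limit is one-sided; you might note in passing that for $q\ge 1$ the integrand $\wt V_{q-1}(K,z)$ is already bounded by $\wt V_{q-1}(K,z)\le \frac1n(\mathrm{diam}\,K)^{q-1}\,n\omega_n$, though this plays no role in the limit $q\to 0^+$.
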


\begin{proof} Since $\partial K$ is $C^2$ with positive curvature, $H$ is continuous and
	\[
	S'(K,\eta) = \int_{\nu_K^{-1}(\eta)} H(z)\, d\hm(z).
	\]
	See Schneider \cite{S14}.
	From this, \eqref{c11}, and Proposition \ref{I10}, we get,
	\begin{align*}
		\lim_{q\to 0^+}F_q(K,\eta)
		&= \lim_{q\to 0^+}\frac{2q}{\omega_n} \int_{\nu_K^{-1}(\eta)} \wt V_{q-1}(K, z)\, d\hm(z) \\
		&= \frac{(n-1)\omega_{n-1}}{n\omega_n} \int_{\nu_K^{-1}(\eta)} H(z)\, d\hm(z) \\
		&=\frac{(n-1)\omega_{n-1}}{n\omega_n} S'(K, \eta).
	\end{align*}
\end{proof}


It would be very interesting to extend \eqref{I12} to all bodies in $\kn$.

\section{  The Differential Formula for Chord Integrals}

In this section, we will establish the differential formula for chord integrals
(which generalizes Aleksandrov's differential formula for volume) to all
positive-power chord integrals. This formula plays a critical role in solving
the chord Minkowski problem. One of the key ideas is the use of the $X$-ray function
instead of the extended radial function, this despite the fact that chord measures are naturally
defined via dual quermassintegrals (which are integrals of powers of the radial function).

For $K\in\kn$, a continuous function
$g : \sn \to \mathbb R$, and $\delta>0$, define $h_t:\sn\to\rbo$ by
\begin{equation}\label{v0}
	h_t(v) = h_K(v) + t g(v) + o(t,v),\qquad\text{$t\in (-\delta,\delta)$ and $v\in\sn$},
\end{equation}
where $o(t,\cdot)/t\rightarrow 0$ uniformly on $\sn$, as $t\to 0$.
The Wulff shape, $K_t$, of $h_t$ is
\begin{equation}\label{v1}
	K_t =\{x \in \rn : x\cdot v \le h_t(v) \text{ for all } v\in \sn\}\qquad t\in (-\delta,\delta).
\end{equation}
When $K$ contains the origin in its interior, the following differential formula
was established in \cite{HLYZ},
\begin{equation}\label{v2}
	\frac{d\rho\lsub{K_t}(u)}{dt}\Big|_{t=0} = \frac{g(\nu_K(\rhok(u)u))}{u\cdot \nu_K(\rhok(u)u)},
\end{equation}
for almost all $u\in\sn$.

The following is the differential formula for the extended radial function which is
a slight extension of \eqref{v2}.

\begin{lemm}\label{E1}
	Suppose $K\in\kn$, and $z$ is an interior point of $K$. If
	$K_t$ is the Wulff shape defined by \eqref{v1}, then for almost all $u\in \sn$,
	\begin{equation}\label{v3}
		\frac{d\rho\lsub{K_t,z}(u)}{dt}\Big|_{t=0}
		=
		\frac{g(\nu_K(z+\rho\lsub{K,z}(u)))}
		{u\cdot \nu_K(z+\rho\lsub{K,z}(u))}.
	\end{equation}
\end{lemm}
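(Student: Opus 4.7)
The plan is to reduce the lemma to the known formula \eqref{v2} by translating $K$ so that $z$ becomes the origin. Set $K' = K - z$. Since $z$ is an interior point of $K$, we have $K' \in \kno$, so \eqref{v2} is applicable to $K'$.

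The first step is to identify $K_t - z$ as the Wulff shape of an appropriate perturbation of $h_{K'}$. From the definition of the support function,
\[
h_{K'}(v) = h_K(v) - z \cdot v,
\]
so the perturbed function
\[
\tilde{h}_t(v) := h_t(v) - z\cdot v = h_{K'}(v) + t g(v) + o(t,v)
\]
is a perturbation of $h_{K'}$ of the same form as in \eqref{v0}, with the same $g$ and the same uniform little-$o$ term. A direct check from the definition of Wulff shape shows that the Wulff shape of $\tilde{h}_t$ is exactly $K_t - z$: indeed, $x \in K_t - z$ iff $(x+z)\cdot v \le h_t(v)$ for all $v\in\sn$, iff $x \cdot v \le \tilde h_t(v)$ for all $v$.

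The second step is to apply \eqref{v2} to the body $K'\in\kno$ with perturbation $\tilde h_t$. This gives, for almost all $u\in\sn$,
\[
\frac{d\rho\lsub{K_t - z}(u)}{dt}\Big|_{t=0}
= \frac{g(\nu_{K'}(\rho\lsub{K'}(u)u))}{u\cdot \nu_{K'}(\rho\lsub{K'}(u)u)}.
\]
Finally, one translates back. On the left, \eqref{c1.0} gives $\rho\lsub{K_t - z}(u) = \rho\lsub{K_t, z}(u)$. On the right, \eqref{c1.0} gives $\rho\lsub{K'}(u) = \rho\lsub{K,z}(u)$, and since the outer unit normal to a convex body at a boundary point is invariant under translation, $\nu_{K'}(x) = \nu_K(x+z)$ for every $x \in \partial K'$. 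Applying both identifications at $x = \rho\lsub{K,z}(u)u$ yields the claimed formula.

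No step here should be a genuine obstacle; the only care needed is to verify that the translation passes cleanly through the Wulff shape construction and preserves the uniform-$o$ property of the remainder. Once \eqref{v2} is accepted as given, the lemma is essentially a bookkeeping consequence of translation invariance.
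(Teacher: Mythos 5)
Your proposal is correct and follows essentially the same path as the paper's own proof: translate to $K-z\in\kno$, observe that $K_t-z$ is the Wulff shape $(K-z)_t$ of the correspondingly shifted perturbation of $h_{K-z}$, apply \eqref{v2}, and translate back using \eqref{c1.0} and the translation invariance of outer normals. The only nuance worth noting is that the displayed formula \eqref{v3} has a small typo (the argument of $\nu_K$ should be $z+\rho\lsub{K,z}(u)u$, not $z+\rho\lsub{K,z}(u)$), which your derivation of course lands on the correct version.
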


\begin{proof}
	Since
	\[
	\rho\lsub{K_t,z}(u) = \rho\lsub{K_t-z}(u),
	\]
	and
	\begin{align*}
		K_t-z &=\{x-z : x\in\rn\  \text{and}\  x\cdot v \le h_t(v), \text{ for all } v\in\sn\} \\
		&=\{y \in \rn : y\cdot v \le h_{K-z}(v) + t g(v) + o(t,v), \text{ for all } v\in\sn \} \\
		&=(K-z)_t.
	\end{align*}
	Thus
	\[
	\frac{d\rho\lsub{K_t,z}(u)}{dt}\Big|_{t=0} = \frac{d\rho\lsub{(K-z)_t}(u)}{dt}\Big|_{t=0}.
	\]
	Since $z$ is an interior point of $K$, the body $K-z$ contains the origin in its interior.
	By \eqref{v2}
	\[
	\frac{d\rho\lsub{(K-z)_t}(u)}{dt}\Big|_{t=0} = \frac{g(\nu_K(y))}{u\cdot \nu_K(y)},
	\]
	where
	$y=z+\rho\lsub{K,z}(u)u \in\partial K$.
	The desired formula \eqref{v3} follows.
\end{proof}

By using \eqref{v3}, we now derive the differential formula for the $X$-ray function.

\begin{lemm}\label{E1.1}
	Suppose $K\in\kn$ and
	$K_t$ is the Wulff shape defined by \eqref{v1}. If $u\in\sn$, then for almost all $x$ in the interior
	of $K|u^\perp$,
	\begin{equation}\label{v8}
		\frac{dX_{K_t}(x, u)}{dt}\Big|_{t=0}
		= \frac{g(\nu_K(y))}{u\cdot \nu_K(y)} - \frac{g(\nu_K(y^-))}{u\cdot \nu_K(y^-)},
	\end{equation}
	where $y$ and $y^-$ are the upper and lower points of $\partial K \cap (x+\mathbb Ru)$.
\end{lemm}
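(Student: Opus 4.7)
The plan is to express $X_{K_t}(x,u)$ as a sum of two extended radial functions based at a common interior point of the chord, and then apply Lemma \ref{E1} twice: once in the direction $u$ and once in the direction $-u$. Fix $u\in\sn$. The claim needed is that for a.e.\ $x$ in the interior of $K|u^\perp$, the chord $K\cap(x+\mathbb R u)$ has positive length, its upper endpoint $y$ and lower endpoint $y^-$ both lie in $\partial K\setminus\sigma_K$ (so $\nu_K(y)$ and $\nu_K(y^-)$ are well defined), and the open segment $(y^-,y)$ is contained in $\Int K$. This is a standard consequence of the facts that $\hm(\sigma_K)=0$, that projection along $u$ carries $\hm$-null subsets of $\partial K$ to Lebesgue-null subsets of $u^\perp$, and that any chord of $K$ lying entirely in $\partial K$ must sit in a flat face of $K$ parallel to $u$.

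For such an $x$, fix any $z\in(y^-,y)\subset\Int K$. By continuity of the Wulff shape map $t\mapsto K_t$ in the Hausdorff metric, $z$ remains in $\Int K_t$ for all sufficiently small $|t|$. Since $K_t\cap(x+\mathbb R u)=K_t\cap(z+\mathbb R u)$, the additive decomposition \eqref{c6.2} applied to $K_t$ yields
\[
X_{K_t}(x,u)=\rho\lsub{K_t,z}(u)+\rho\lsub{K_t,z}(-u).
\]
Differentiating at $t=0$ and invoking Lemma \ref{E1} in the directions $u$ and $-u$, with $y=z+\rho\lsub{K,z}(u)u$ and $y^-=z+\rho\lsub{K,z}(-u)(-u)$, produces
\[
\frac{dX_{K_t}(x,u)}{dt}\bigg|_{t=0}=\frac{g(\nu_K(y))}{u\cdot\nu_K(y)}+\frac{g(\nu_K(y^-))}{(-u)\cdot\nu_K(y^-)},
\]
which is precisely \eqref{v8} after simplifying the second denominator.

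The main subtlety is that Lemma \ref{E1} is stated as holding for a.e.\ $u\in\sn$ with the basepoint held fixed, whereas in our application $u$ is fixed and the basepoint $z=z(x)$ varies with $x$. This is not a genuine obstruction: inspecting the proof of Lemma \ref{E1} (which invokes the formula \eqref{v2} from \cite{HLYZ}), the sole hypothesis required in order for the differentiation formula to hold at a given pair $(z,u)$ is that the boundary point $z+\rho\lsub{K,z}(u)u$ belong to $\partial K\setminus\sigma_K$. This condition depends only on the chord $K\cap(x+\mathbb R u)$ and not on the particular basepoint chosen along it. Hence selecting $x$ so that both chord endpoints $y$ and $y^-$ avoid $\sigma_K$---a condition that holds for a.e.\ $x$ in $\Int(K|u^\perp)$, as noted above---legitimizes the simultaneous application of Lemma \ref{E1} at $z$ in both directions $\pm u$, which is all that the argument requires.
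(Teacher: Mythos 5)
Your proof is correct and takes essentially the same route as the paper's: pick an interior point $z$ on the chord, write $X_{K_t}(x,u)=\rho_{K_t,z}(u)+\rho_{K_t,z}(-u)$ via \eqref{c6.2}, and differentiate using Lemma~\ref{E1} in the directions $\pm u$. You additionally spell out why the ``for almost all $u$'' qualifier in Lemma~\ref{E1} is compatible with $u$ being fixed here (the relevant condition is that the chord endpoints avoid $\sigma_K$, which holds for a.e.\ $x$), a measure-theoretic point the paper's terser proof leaves implicit.
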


\begin{proof} Since $x$ is an interior of $K|u^\perp$, we can
	pick an interior point $z$ in $K$ so that
	\[
	K\cap (x+\mathbb Ru) = K\cap (z+\mathbb Ru).
	\]
	By \eqref{c6.2} and \eqref{v3}, we have
	\begin{align*}
		\frac{dX_{K_t}(x,u)}{dt}\Big|_{t=0} &= \frac{d\rho\lsub{K_t,z}(u)}{dt}\Big|_{t=0}
		+ \frac{d\rho\lsub{K_t,z}(-u)}{dt}\Big|_{t=0} \\
		&=\frac{g(\nu_K(y))}{u\cdot \nu_K(y)} - \frac{g(\nu_K(y^-))}{u\cdot \nu_K(y^-)}.
	\end{align*}
\end{proof}

The following lemma presents the identity for the integral forms of chord measures
in terms of dual quermassintegrals and the $X$-ray function.

\begin{lemm}\label{V1}
	Suppose $K\in\kn$, $q>-1$,  and  $g$ is a continuous function on $\sn$.  Then
	\[
	2n \int_{\partial K} \wt V_{q}(K,z) g(\nu_K(z))\, d\hm(z)
	= \int_{\sn} \int_{\partial K} X_K(z,u)^{q} g(\nu_K(z))\, d\hm(z) du.
	\]
\end{lemm}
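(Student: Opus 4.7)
The plan is to reduce both sides of the claimed identity to the same iterated integral via Tonelli's theorem together with an antipodal-symmetry change of variables in $u$. The key input is \eqref{c6.6}: for $\hm$-almost every $z\in\partial K$ (namely $z\notin\sigma_K$), the extended radial function $\rho\lsub{K,z}$ vanishes on the open hemisphere into which $\nu_K(z)$ points, so $X_K(z,u)=\rho\lsub{K,z}(u)$ whenever $u\cdot\nu_K(z)<0$ and $X_K(z,u)=\rho\lsub{K,z}(-u)$ whenever $u\cdot\nu_K(z)>0$. Set $E^\pm(u):=\{z\in\partial K\setminus\sigma_K:\pm\,u\cdot\nu_K(z)>0\}$.

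First I would unfold the LHS. Since $z\in\partial K\subset K$ forces $S_z^-=\varnothing$, definition \eqref{c2} gives $\wt V_q(K,z)=(1/n)\int_{S_z^+}\rho\lsub{K,z}(u)^q\,du$; and for $\hm$-a.e.\ $z$ the set $S_z^+$ is precisely the open hemisphere $\{u\in\sn:u\cdot\nu_K(z)<0\}$. Reducing to $g\ge 0$ by splitting $g=g^+-g^-$ and applying Tonelli's theorem yields
\[
2n\int_{\partial K}\wt V_q(K,z)\,g(\nu_K(z))\,d\hm(z)=2\int_{\sn}\int_{E^-(u)}\rho\lsub{K,z}(u)^q g(\nu_K(z))\,d\hm(z)\,du.
\]

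Next I would unfold the RHS. For each $u\in\sn$, the tangential set $\{z:u\cdot\nu_K(z)=0\}$ is $\hm$-null for $du$-a.e.\ $u$, hence $\partial K=E^-(u)\cup E^+(u)$ up to null sets. The dichotomy above then gives
\begin{align*}
\int_{\partial K}X_K(z,u)^q g(\nu_K(z))\,d\hm(z)
&=\int_{E^-(u)}\rho\lsub{K,z}(u)^q g(\nu_K(z))\,d\hm(z)\\
&\quad+\int_{E^+(u)}\rho\lsub{K,z}(-u)^q g(\nu_K(z))\,d\hm(z).
\end{align*}
Integrating over $u\in\sn$ and performing the antipodal substitution $u\mapsto -u$ in the second summand (which uses the antipodal invariance of spherical Lebesgue measure and $E^+(-u)=E^-(u)$) collapses both terms into the same expression. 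The RHS thus becomes exactly twice the iterated integral from the previous display, matching the LHS.

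The main technical point will be justifying Tonelli on $\sn\times(\partial K\setminus\sigma_K)$ for the full range $q>-1$. For $q\ge 0$ this is immediate from Lemma~\ref{I7}. For $-1<q<0$ the nonnegative integrand $\rho\lsub{K,z}(u)^q$ is singular near the equator $\{u\cdot\nu_K(z)=0\}$, but the equator is $du$-null at each fixed $z$, Tonelli applies directly to nonnegative integrands, and the finiteness of the iterated integral reduces, via the 2-to-1 projection of $\partial K$ onto $K|u^\perp$, to the chord-integral finiteness of Lemma~\ref{I1}.
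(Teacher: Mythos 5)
Your proposal is essentially the same argument the paper gives: split $D^+=\{(z,u)\in\partial K\times\sn:\rho\lsub{K,z}(u)>0\}$ into iterated integrals via Fubini/Tonelli, use the dichotomy \eqref{c6.6} to identify $X_K(z,u)^q$ with $\rho\lsub{K,z}(u)^q$ on one half and $\rho\lsub{K,z}(-u)^q$ on the other, and fold the two halves together by the antipodal substitution $u\mapsto -u$. Your set $E^-(u)$ is exactly what the paper calls the lower graph $\partial K(u)$, and the rest of the calculation matches.

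The one real gap is in your finiteness justification for $-1<q<0$. You say the finiteness of $\int_{\sn}\int_{E^-(u)}\rho\lsub{K,z}(u)^q\,d\hm(z)\,du$ ``reduces, via the $2$-to-$1$ projection of $\partial K$ onto $K|u^\perp$, to the chord-integral finiteness of Lemma~\ref{I1}.'' This reduction does not go through as stated: pushing the surface integral over $\partial K(u)$ forward to the base $K|u^\perp$ produces the Jacobian factor $1/|u\cdot\nu_K(z)|$, which is unbounded near the shadow boundary $\{z\in\partial K:u\cdot\nu_K(z)=0\}$. So
\[
\int_{\partial K(u)}\rho\lsub{K,z}(u)^q\,d\hm(z)
=\int_{K|u^\perp}\frac{X_K(x,u)^q}{|u\cdot\nu_K(z(x))|}\,dx,
\]
which is \emph{not} controlled by $I_q(K,u)=\int_{K|u^\perp}X_K(x,u)^q\,dx$. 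The finiteness you need is precisely Lemma~\ref{I8} applied with its parameter equal to $q+1>0$: it asserts that $\rho\lsub{K,z}(u)^q$ is integrable over $D^+$ for all $q>-1$, and that is what the paper cites. Lemma~\ref{I8} in turn is proved not by projecting to $K|u^\perp$ but by the divergence-theorem machinery of Lemma~\ref{I7}, after translating so that $z\cdot\nu_K(z)\ge c>0$ on $\partial K$; that device is exactly what absorbs the $1/|u\cdot\nu_K|$ singularity that your projection argument leaves unaddressed. Replacing your appeal to Lemma~\ref{I1} by an appeal to Lemma~\ref{I8} closes the gap and brings the proof in line with the paper's.
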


\begin{proof}
	Since $g$ is continuous, $g\circ\nu_K$ is bounded and integrable over $\partial K$.
	For $u\in \sn$, let $\partial K(u)$ be the lower graph
	of $\partial K$ with respect $u$.
	Let
	\[
	D^+ =\{(z,u) \in \partial K \times \sn : \rho\lsub{K,z}(u)>0\}.
	\]
	It is known (see Schneider \cite{S14}) that
	$\partial K \cap (z+\mathbb Ru)$ consists of no more than two points for almost all $u \in \sn$.
	By appealing to Lemma \ref{I8}, we have
	\begin{align*}
		n \int_{\partial K} \wt V_{q}(K,z) g(\nu_K(z))\, d\hm(z)
		&= \int_{D^+} \rho\lsub{K,z}(u)^{q} g(\nu_K(z))\, d\hm(z) du \\
		&= \int_{\sn} \int_{\partial K(u)} \rho\lsub{K,z}(u)^{q} g(\nu_K(z))\, d\hm(z) du \\
		&= \int_{\sn} \int_{\partial K(-u)} \rho\lsub{K,z}(-u)^{q} g(\nu_K(z))\, d\hm(z) du,
	\end{align*}
	and
	\begin{multline*}
		\int_{\sn} \int_{\partial K} X_K(z,u)^{q} g(\nu_K(z))\, d\hm(z) du=\\
		\int_{\sn} \int_{\partial K(u)} \rho\lsub{K,z}(u)^{q} g(\nu_K(z))\, d\hm(z) du  \\
		+\int_{\sn} \int_{\partial K(-u)} \rho\lsub{K,z}(-u)^{q} g(\nu_K(z))\, d\hm(z) du.
	\end{multline*}
\end{proof}

A generalized dominated convergence theorem will be needed to establish the lemma to follow: Suppose $f_k, \phi_k, f, \phi$
are integrable functions in a measure space with $f_k \to f$, and $\phi_k \to \phi$, while
$|f_k| \le \phi_k$, almost everywhere.
If $\int \phi_k \to \int \phi$, then $\int f_k \to \int f$.
The following lemma is the crucial technical lemma needed in order to establish the differential formula for chord integrals.

\begin{lemm}\label{V2}
	Suppose $q>0$. If $K\in\kn$ and
	$K_t$ is the Wulff shape defined by \eqref{v1}, then
	there  is a class of nonnegative integrable functions $\phi_t(x, u)$
	defined for $u\in S^{n-1}$ and $x\in u^\bot$, such that
	\begin{equation}\label{E6-1}
		\Big|\frac1{t}\Big(X_{K_t}(x,u)^q - X_K(x,u)^q\Big)\Big| \leq \phi_t(x, u).
	\end{equation}
	Moreover, the limit function $\lim_{t\to 0}\phi_t(x,u)$ is integrable and
	\begin{equation}\label{E6-eq2}
		\lim_{t\to 0}\int_{\sn}\int_{u^\bot}\phi_t(x,u)\, dxdu =
		\int_{\sn}\int_{u^\bot} \lim_{t\to 0}\phi_t(x,u)\, dxdu.
	\end{equation}
\end{lemm}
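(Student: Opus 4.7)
The plan is to exhibit an explicit majorant $\phi_t$ for the difference quotient $|(X_{K_t}^q - X_K^q)/t|$, verify its pointwise convergence, and establish $\int \phi_t \to \int \phi_0$ so that the generalized dominated convergence theorem applies. After translating so that the origin lies in the interior of $K$ (which leaves $X_K, X_{K_t}$ unchanged and the Wulff-shape construction equivariant), the hypothesis $h_t = h_K + tg + o(t,\cdot)$ with uniform $o(t,\cdot)/t \to 0$ yields constants $L, \delta > 0$ for which $|h_t - h_K| \leq L|t|$ on $\sn$ whenever $|t| \leq \delta$. This gives the sandwich $K \ominus L|t|B^n \subseteq K_t \subseteq K + L|t|B^n$, so that all the $K_t$ lie in a common compact set and their projections $K_t|u^\perp$ do not grow beyond that of a fixed dilate of $K$.

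The key tool is the elementary identity
$$a^q - b^q \;=\; q(a-b)\int_0^1 \big[(1-\lambda)b + \lambda a\big]^{q-1}\,d\lambda, \qquad a,b \geq 0,\ q > 0,$$
applied with $a = X_{K_t}(x,u)$ and $b = X_K(x,u)$. Defining
$$\phi_t(x,u) \;=\; q\,\frac{|X_{K_t}(x,u) - X_K(x,u)|}{|t|}\int_0^1 \big[(1-\lambda)X_K(x,u) + \lambda X_{K_t}(x,u)\big]^{q-1}\,d\lambda$$
gives $|(X_{K_t}^q - X_K^q)/t| \leq \phi_t$ automatically. The pointwise limit $\phi_t(x,u) \to \phi_0(x,u)$ follows from Lemma \ref{E1.1}, which identifies the limit of $(X_{K_t}-X_K)/t$, together with dominated convergence inside the $\lambda$-integral; for $0 < q < 1$ the $\lambda$-integrand is controlled by the split bound $[(1-\lambda)X_K + \lambda X_{K_t}]^{q-1} \leq 2^{1-q}(X_K^{q-1} + X_{K_t}^{q-1})$ obtained from the estimates $(1-\lambda)X_K + \lambda X_{K_t} \geq \tfrac{1}{2}X_K$ on $\lambda \in [0,\tfrac{1}{2}]$ and $\geq \tfrac{1}{2}X_{K_t}$ on $\lambda \in [\tfrac{1}{2},1]$, while for $q \geq 1$ the $\lambda$-integrand is bounded by $(\mathrm{diam}\,K^*)^{q-1}$.

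For the $L^1$-convergence $\int \phi_t \to \int \phi_0$, use Fubini to push the $\lambda$-integral to the outside and apply DCT on the inner $(x,u)$-integral for each fixed $\lambda$. Here one needs a $t$-independent integrable majorant for the integrand: the factor $|X_{K_t} - X_K|/|t|$ is pointwise dominated (outside a negligible set of lines) by $L\big(|u\cdot\nu_K(y)|^{-1} + |u\cdot\nu_K(y^-)|^{-1}\big)$ via Lemma \ref{E1.1}, and the change of variables $dx = |u\cdot\nu_K(y)|\,d\hm(y)$ on the upper and lower graphs of $\partial K$ relative to $u$ absorbs the singularity, while the factor $[(1-\lambda)X_K + \lambda X_{K_t}]^{q-1}$ is dominated by $2^{1-q}(X_K^{q-1} + X_{K_t}^{q-1})$, whose double integral is bounded by $n\omega_n(I_{q-1}(K) + I_{q-1}(K_t))$ and hence finite by Lemma \ref{I11}. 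A second DCT in $\lambda$ then produces $\int \phi_0$.

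The main obstacle is the behavior of the difference quotient near $\partial(K|u^\perp)$: there $X_K(x,u) = 0$ while $X_{K_t}(x,u)$ may be positive (on the order of $\sqrt{|t|}$), so the naive interior bound $|X_{K_t} - X_K|/|t| \leq 2L$ fails, and the pointwise derivative formula from Lemma \ref{E1.1} blows up as $u\cdot\nu_K(y) \to 0$. Absorbing these tangential contributions requires both the Hausdorff-distance inclusion from the setup and the integrability of $X_K^{q-1}$ on $\sn \times u^\perp$, which is precisely where the hypothesis $q > 0$ is essential through the finiteness of the chord integral $I_{q-1}(K)$; this is the technical heart of the lemma.
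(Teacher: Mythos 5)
Your approach is genuinely different from the paper's, and it contains gaps that are not filled. The paper's proof rests on a different choice of majorant. Because the origin is interior, one can arrange a \emph{scaling} sandwich $(1-c|t|)K\subseteq K_t\subseteq(1+c|t|)K$ (not a Minkowski sandwich $K\ominus L|t|B^n\subseteq K_t\subseteq K+L|t|B^n$), and then take
\[
\phi_t(x,u)=\frac1{|t|}\Big(X_{(1+c|t|)K}(x,u)^q-X_{(1-c|t|)K}(x,u)^q\Big),
\]
which dominates the difference quotient automatically. The whole point of this choice is that $\int_{u^\perp}X_{aK}(x,u)^q\,dx=a^{n+q-1}I_q(K,u)$, so $\int\!\!\int\phi_t\,dx\,du$ is computed \emph{exactly} by homogeneity, and its limit $2c(n+q-1)n\omega_nI_q(K)$ is matched against $\int\!\!\int\lim\phi_t$ using Lemmas~\ref{E1.1}, \ref{V1} and \ref{I7}. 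No pointwise $t$-uniform control near $\partial(K|u^\perp)$ is ever needed.

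Two of your steps do not hold up as stated. First, you claim that $|X_{K_t}(x,u)-X_K(x,u)|/|t|$ is pointwise dominated by $L\big(|u\cdot\nu_K(y)|^{-1}+|u\cdot\nu_K(y^-)|^{-1}\big)$ ``via Lemma~\ref{E1.1}.'' But Lemma~\ref{E1.1} gives the \emph{limit} of the difference quotient, not a bound valid for all small $t$; for a general Wulff perturbation $t\mapsto X_{K_t}(x,u)$ is not known to be concave or monotone in $t$, so no mean-value or convexity argument converts the limit into a uniform bound, and you do not supply one. Second, even granting that bound, your argument for $\int\phi_t\to\int\phi_0$ (Fubini in $\lambda$, then DCT in $(x,u)$) requires, for each fixed $\lambda$, a $t$-independent integrable majorant of the $(x,u)$-integrand. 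The factor you exhibit, $2^{1-q}\big(X_K^{q-1}+X_{K_t}^{q-1}\big)$, is not such a majorant when $0<q<1$: $X_{K_t}^{q-1}$ depends on $t$, and there is no fixed integrable function dominating $X_{K_t}^{q-1}$ uniformly in $t$ --- as $t$ varies, $\partial(K_t|u^\perp)$ sweeps a set of positive measure on which the supremum over $t$ is identically $+\infty$. Appealing once more to the generalized DCT for this inner integral merely transfers the burden to proving $\int\!\!\int X_{K_t}^{q-1}\to\int\!\!\int X_K^{q-1}$, which is essentially a lemma of the same nature as the one being proved, so the argument is circular as written. You flag the boundary behavior as ``the technical heart'' but then only gesture at it; the paper's homogeneity computation is precisely what sidesteps that heart, and your proposal does not replace it with anything.
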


\begin{proof}
	One can assume that the origin is inside the interior of $K$ because the
	$X$-ray $X_K(x,u)$ is invariant under translations of $K$.
	Since $g$ in \eqref{v1} is continuous, and since $o(t,\cdot)/t \rightarrow 0$
	uniformly on $S^{n-1}$,  there exist constants $c,\delta'>0$ so that
	\[ \Big|g(v) + \frac{o(t,v)}t\Big|\le c h_K(v), \quad
	{\rm for~all}\quad  v\in \sn, t\in(-\delta',\delta').\]
	Then
	\[
	(1-c|t|)K \subset K_t \subset (1+c|t|)K.
	\]
	Thus,
	\begin{equation}\label{eq-ctrl1}
		\Big|\frac1{t}\big(X_{K_t}(x,u)^q - X_K(x,u)^q\big)\Big|  \leq
		\phi_t(x,u),
	\end{equation}
	where
	\begin{equation}
		\phi_t(x,u)=
		\frac1{|t|}
		\big( X_{(1+c|t|)K}(x,u)^q-  X_{(1-c|t|)K}(x,u)^q\big).
	\end{equation}
	
	Thus \eqref{E6-1} holds.
	
	The homogeneity of $I_q(K,u)$, gives us
	\begin{align*}
		\int_{u^\bot} \phi_t(x,u)\, dx
		&= \frac1{|t|} \big(I_q((1+c|t|)K,u) - I_q((1-c|t|)K, u)\big)\\
		&= \frac1{|t|} \big( (1+c|t|)^{n+q-1}- (1-c|t|)^{n+q-1}\big)I_q(K,u).
	\end{align*}
	Therefore,
	\[
	\int_{S^{n-1}}\int_{u^\bot} \phi_t(x, u)\, dxdu
	= \frac1{|t|}\big((1+c|t|)^{n+q-1}- (1-c|t|)^{n+q-1}\big) n\omega_n I_q(K),
	\]
	and thus
	\[ \lim_{t\to 0}\int_{\sn}\int_{u^\bot} \phi_t(x,u)\, dxdu  = 2c(n+q-1)n\omega_nI_q(K).\]
	
	On the other hand, by \eqref{v8}, when $x$ is an interior point of $K|u^\perp$, we have
	\[\lim_{t\rightarrow0} \phi_t (x,u) = 2qc X_K(x,u)^{q-1}
	\Big(\frac{h_K(\nu_K(y))}{u\cdot\nu_K(y)}-\frac{h_K(\nu_K(y^-))}{u\cdot\nu_K(y^-)}\Big),\]
	where $y$ and $y^-$ are the two boundary points of
	$\partial K\cap (x+\mathbb Ru)$.
	Since for almost all $u\in\sn$, $\partial K\cap (x+\mathbb Ru)$ consists of at most two points,
	we get
	\begin{align*}&\int_{S^{n-1}}\int_{u^\bot} \lim_{t\to 0}\phi_t(x,u)\, dxdu\\
		=&2qc \int_{S^{n-1}}\int_{K|u^\perp} X_K(x,u)^{q-1}
		\Big(\frac{h_K(\nu_K(y))}{u\cdot\nu_K(y)}-\frac{h_K(\nu_K(y^-))}{u\cdot\nu_K(y^-)}\Big)\,dxdu\\
		 =& 2qc\int_{S^{n-1}} \int_{\partial K}  X_K(y,u)^{q-1} h_K(\nu_K(y))\,d\hm(y)du.
	\end{align*}
	By Lemmas \ref{I7} and \ref{V1}, we obtain
	\[I_q(K) = \frac{q}{(n+q-1)n\omega_n}\int_{S^{n-1}} \int_{\partial K}
	X_K(y,u)^{q-1} h_K(\nu_K(y))\,d\hm(y)du. \]
	Therefore, we obtain
	\[\int_{S^{n-1}}\int_{u^\bot} \lim_{t\rightarrow 0}\phi_t(x,u)\, dxdu
	= 2c(n+q-1)n\omega_nI_q(K).\]
	Thus both sides of equation \eqref{E6-eq2} are equal to $2c(n+q-1)n\omega_nI_q(K)$.
\end{proof}

The following is the variational formula for chord integrals that shows that
chord measures are differentials of chord integrals.

\begin{theo}\label{V3}
	Let $K\in\kn$ and $q>0$.
	Suppose that $g : \sn \to \mathbb R$ is a continuous function
	and $h_t:\sn\to\rbo$ be given by
	\begin{equation*}
		h_t(v) = h_K(v) + t g(v) + o(t,v),\qquad v\in \sn, \ t\in (-\delta,\delta), \ \delta>0,
	\end{equation*}
	where $o(t,\cdot)/t\rightarrow 0$ uniformly on $\sn$, as $t\to 0$.
	If
	\begin{equation*}
		K_t =\{x \in \rn : x\cdot v \le h_t(v) \text{ for all } v\in \sn\},    \qquad t\in (-\delta,\delta),
	\end{equation*}
	is the Wulff shape of $h_t$, then
	\begin{equation}\label{V3-1}
		\frac d{dt}\Big|_{t=0} I_q(K_t) =
		\frac {2q}{\omega_n} \int_{\partial K}  \wt V_{q-1}(K,z) g(\nu_K(z))\, d\hm(z)
		= \int_{\sn} g(v)\, dF_q(K,v).
	\end{equation}
\end{theo}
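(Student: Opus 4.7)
The plan is to start from the $X$-ray representation
\[
I_q(K_t) = \frac{1}{n\omega_n}\int_{\sn}\int_{u^\perp} X_{K_t}(x,u)^q\,dxdu,
\]
(extending $X_{K_t}(\cdot,u)$ by zero off $K_t|u^\perp$ so the inner domain of integration is independent of $t$) and to differentiate under the double integral. The pointwise derivative of the integrand is furnished by Lemma \ref{E1.1} together with the chain rule: for a.e.\ $u\in\sn$ and a.e.\ $x$ in the interior of $K|u^\perp$,
\[
\frac{d}{dt}\Big|_{t=0} X_{K_t}(x,u)^q = qX_K(x,u)^{q-1}\left(\frac{g(\nu_K(y))}{u\cdot\nu_K(y)} - \frac{g(\nu_K(y^-))}{u\cdot\nu_K(y^-)}\right),
\]
where $y$ and $y^-$ are the upper and lower boundary points of $\partial K\cap(x+\R u)$.

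To justify the interchange of limit and integral I would invoke Lemma \ref{V2}: it supplies a family of integrable dominants $\phi_t(x,u)$ satisfying $|(X_{K_t}^q - X_K^q)/t|\le\phi_t$ together with $\int_{\sn}\int_{u^\perp}\phi_t\to\int_{\sn}\int_{u^\perp}\lim_{t\to 0}\phi_t$. This is exactly the hypothesis of the generalized dominated convergence theorem (Pratt's lemma) stated just above Lemma \ref{V2}, and it yields
\[
\frac{d}{dt}\Big|_{t=0} I_q(K_t) = \frac{q}{n\omega_n}\int_{\sn}\int_{K|u^\perp} X_K(x,u)^{q-1}\left(\frac{g(\nu_K(y))}{u\cdot\nu_K(y)} - \frac{g(\nu_K(y^-))}{u\cdot\nu_K(y^-)}\right)dx\,du.
\]

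To pass from $u^\perp$ to $\partial K$ I would use the projection Jacobian. For a.e.\ $u$ the boundary $\partial K$ splits (up to an $\hm$-null set) into upper and lower graphs over $K|u^\perp$, with $dx=(u\cdot\nu_K(z))\,d\hm(z)$ on the upper graph (where $u\cdot\nu_K(z)>0$) and $dx=-(u\cdot\nu_K(z))\,d\hm(z)$ on the lower. The reciprocal factor $(u\cdot\nu_K)^{-1}$ cancels in each piece, and after accounting for the minus sign in front of the second term the two summands combine into $\int_{\partial K} X_K(z,u)^{q-1} g(\nu_K(z))\,d\hm(z)$. Applying Lemma \ref{V1} with exponent $q-1>-1$ converts this to $2n\int_{\partial K}\wt V_{q-1}(K,z)g(\nu_K(z))\,d\hm(z)$; combining with the prefactor $q/(n\omega_n)$ delivers the first equality in \eqref{V3-1}, and the second equality is \eqref{XX1}.

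The main obstacle is the limit interchange in the first step. For $q\ge 1$ ordinary dominated convergence would suffice, since the difference quotient is bounded. When $0<q<1$, however, $X_K^{q-1}$ blows up near the relative boundary of $K|u^\perp$ and no naive $t$-independent dominant exists. The explicit construction of $\phi_t$ via the sandwich $(1-c|t|)K\subset K_t\subset(1+c|t|)K$ in Lemma \ref{V2}, together with the verification (via homogeneity of $I_q$ and the identity in Lemma \ref{I7}) that $\int\phi_t$ converges to $\int\lim\phi_t$, is precisely what makes Pratt's theorem applicable and turns this otherwise delicate step into a bookkeeping exercise.
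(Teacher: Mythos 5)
Your proposal is correct and follows essentially the same route as the paper's proof: differentiate the $X$-ray representation of $I_q(K_t)$ under the integral, using Lemma \ref{E1.1} for the pointwise derivative, Lemma \ref{V2} together with Pratt's generalized dominated convergence theorem to justify the interchange, the projection Jacobian $dx=\pm(u\cdot\nu_K(z))\,d\hm(z)$ on the two graphs to move from $u^\perp$ to $\partial K$ (the paper performs this step silently, exactly as in its proof of Lemma \ref{V2}), and finally Lemma \ref{V1} with exponent $q-1$ and \eqref{XX1} to land on \eqref{V3-1}. Your remark that ordinary dominated convergence would suffice for $q\ge 1$ but Pratt's lemma is what rescues the case $0<q<1$ is an accurate and useful gloss on why Lemma \ref{V2} is stated the way it is.
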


\begin{proof}
	By Lemma \ref{V2}, there is a family of nonnegative integrable function $\phi_t(x,u)$
	satisfying \eqref{E6-1} and \eqref{E6-eq2}.
	Then by the generalized dominated convergence theorem and Lemmas \ref{V2},
	\ref{E1.1} and \ref{V1}, we have
	\begin{align*}
		\frac d{dt}\Big|_{t=0} I_q(K_t) &=  \frac1{n\omega_n}\lim_{t\to 0}
		\int_{S^{n-1}}\int_{u^\perp} \frac1t \Big(X_{K_t}(x,u)^q -  X_K(x,u)^q\Big)\, dxdu \\
		&=\frac1{n\omega_n}\int_{S^{n-1}}\int_{u^\bot} \lim_{t\to 0} \frac1t
		\Big(X_{K_t}(x,u)^q-X_K(x,u)^q\Big)\,dx du \\
		&=\frac q{n\omega_n}\int_{S^{n-1}}\int_{u^\bot} X_K(x,u)^{q-1}
		\Big(\frac{g(\nu_K(y))}{u\cdot\nu_K(y)}-\frac{g(\nu_K(y^-))}{u\cdot\nu_K(y^-)}\Big)\,dxdu\\
		&= \frac q{n\omega_n}\int_{S^{n-1}} \int_{\partial K}   X_K(y,u)^{q-1} g(\nu_K(y))\, d\hm(y)du \\
		&=\frac {2q}{\omega_n}\int_{\partial K}  \wt V_{q-1}(K,z) g(\nu_K(z))\, d\hm(z),
	\end{align*}
	where $y$ and $y^-$ are the two boundary points of
	$\partial K\cap (x+\mathbb Ru)$.
\end{proof}

Note that the special case of $q=1$ of the variational formula \eqref{V3-1} is the volume
variational formula of Aleksandrov.

\begin{lemm}\label{V3+}
	Suppose $K\in \kno$ and $q>0$. Suppose further that $g :\sn \to \rbo$ is a
	continuous function, while $K_t$ is the Wulff shape of
	$h_t$ defined for $|t|<\delta$, for some $\delta>0$ by
	\[
	\log h_t = \log h_K + t g + o(t, \cdot),\qquad\text{on $\sn$}
	\]
	where $o(t,\cdot)/t \to 0$ uniformly on $\sn$ as $t\to 0$.
	Then the differential of $I_q(K_t)$, is given by
	\begin{equation}\label{V3.1}
		\frac d{dt}\Big|_{t=0}I_q(K_t) = (n+q-1) \int_{\sn}g(v) dG_q(K,v).
	\end{equation}
\end{lemm}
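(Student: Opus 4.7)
The plan is to reduce the logarithmic variation to the additive variation already treated in Theorem \ref{V3}, then convert the resulting integral against $F_q(K,\cdot)$ into one against $G_q(K,\cdot)$ via formula \eqref{cm2}.

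First I would rewrite the logarithmic expansion as an additive one. Since $K\in\kno$, the support function $h_K$ is bounded above and below by positive constants on $\sn$. Exponentiating the hypothesis gives
\begin{equation*}
h_t = h_K \exp\bigl(tg + o(t,\cdot)\bigr) = h_K + t\,(h_K g) + r(t,\cdot),
\end{equation*}
where the remainder satisfies $r(t,v) = h_K(v)\bigl[\exp(tg(v)+o(t,v)) - 1 - tg(v)\bigr] + h_K(v)\, o(t,v)$. Since $g$ is continuous (hence bounded) and $o(t,\cdot)/t \to 0$ uniformly on $\sn$, a standard Taylor estimate for the exponential shows that $r(t,\cdot)/t \to 0$ uniformly on $\sn$. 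Thus $h_t$ admits the additive expansion required by Theorem \ref{V3}, with the continuous perturbation function $h_K g$ in place of $g$.

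Second, I would apply Theorem \ref{V3} directly to this additive expansion. The Wulff shape $K_t$ is the same convex body in both formulations, so
\begin{equation*}
\frac{d}{dt}\Big|_{t=0} I_q(K_t) = \int_{\sn} h_K(v)\, g(v)\, dF_q(K,v).
\end{equation*}

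Finally, I would invoke the relation \eqref{cm2}, namely $dG_q(K,\cdot) = \tfrac{1}{n+q-1}\, h_K\, dF_q(K,\cdot)$, to rewrite the right-hand side:
\begin{equation*}
\int_{\sn} h_K(v)\, g(v)\, dF_q(K,v) = (n+q-1)\int_{\sn} g(v)\, dG_q(K,v),
\end{equation*}
which is the claimed formula. The only substantive step is the uniform control on the remainder in the additive expansion; this is routine because $h_K$ is bounded away from zero and infinity on $\sn$, so neither obstacle is serious. Everything else is bookkeeping that lets us quote Theorem \ref{V3} and equation \eqref{cm2}.
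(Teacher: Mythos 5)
Your proposal is correct and follows exactly the paper's argument: exponentiate to obtain the additive expansion $h_t = h_K + t\,h_K g + o(t,\cdot)$, apply Theorem \ref{V3} with $h_K g$ as the perturbation, and convert from $F_q$ to $G_q$ via \eqref{cm2}. The paper states this tersely; you have merely filled in the routine uniform-remainder estimate.
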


\begin{proof} It is easily seen that
	\[
	h_t = h_K + t gh_K + o(t, \cdot),\qquad\text{on $\sn$}
	\]
	Then \eqref{V3.1} follows immediately from \eqref{V3-1}.
\end{proof}

\section{  The Minkowski Problem for Chord Measures}

In this section,
we solve the Minkowski problem for the chord measure $F_q(K,\cdot)$.

\vspace{5pt}


\vspace{5pt}

For a finite Borel measure $\mu$  on $\sn$, and $f\in C^+(\sn)$, the positive continuous function on $\sn$, let
\[
\|f\negthinspace\negthinspace:\negthinspace\mu\| = \int_{\sn} f(u)\, d\mu(u).
\]
Define the functional $\Phi_{q,\mu}:\kno\to \rbo$, by
\[
\Phi_{q,\mu}(K) = I_q(K)^\frac1{n+q-1} / \|h_K\negthinspace\negthinspace:\negthinspace\mu\|.
\]

The following lemma reduces the chord Minkowski problem to a maximization of a functional
on convex bodies.

\begin{lemm}\label{M1}
	Suppose $q>0$, and $\mu$ is a nonzero finite Borel measure on $\sn$. If the maximization problem
	\[
	\sup\{\Phi_{q,\mu}(K) : K\in \kno\}
	\]
	has a solution in $\kno$, then there exists a body $K\in\kno$ so that
	\[
	F_q(K,\cdot) = \mu.
	\]
\end{lemm}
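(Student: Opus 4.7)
\medskip

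\noindent\textbf{Proof plan.} The plan is a standard Lagrange-multiplier argument adapted to a Wulff-shape perturbation. Let $K_0\in\kno$ be a maximizer of $\Phi_{q,\mu}$. Fix an arbitrary continuous $g:\sn\to\R$ and, for small $|t|<\delta$, form the family of positive continuous functions
\[
h_t = h_{K_0} + tg,
\]
and let $K_t=[h_t]$ be its Wulff shape. For $\delta>0$ sufficiently small, $K_t\in\kno$, and the crucial pointwise inequality $h_{K_t}\le h_t$ on $\sn$ (which follows directly from the definition of a Wulff shape) gives
\[
\|h_{K_t}\!:\!\mu\|\ \le\ \|h_t\!:\!\mu\|.
\]
Introduce the one-variable function
\[
F(t)\ =\ \frac{I_q(K_t)^{1/(n+q-1)}}{\|h_t\!:\!\mu\|}.
\]
Since $I_q$ depends only on the body $K_t$ (not on the defining $h_t$), the inequality above yields $F(t)\le \Phi_{q,\mu}(K_t)\le \Phi_{q,\mu}(K_0)=F(0)$ for all $t$ in the interval.

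Next I would differentiate $F$ at $t=0$. The denominator is linear in $t$, with derivative $\int_{\sn} g\,d\mu$. For the numerator, Theorem \ref{V3} applies with $o(t,v)\equiv0$ and gives
\[
\left.\frac{d}{dt}\right|_{t=0}I_q(K_t)\ =\ \int_{\sn} g(v)\,dF_q(K_0,v).
\]
Because $F$ attains a maximum at $t=0$ and is differentiable there, $F'(0)=0$. A short calculation, using also the identity $(n+q-1)I_q(K_0)=\int_{\sn}h_{K_0}\,dF_q(K_0,\cdot)$ from Theorem \ref{I9}, rewrites this vanishing as
\[
\int_{\sn} g\,dF_q(K_0,\cdot)\ =\ c\int_{\sn} g\,d\mu,\qquad
c\ =\ \frac{(n+q-1)\,I_q(K_0)}{\|h_{K_0}\!:\!\mu\|}.
\]
Since $g\in C(\sn)$ is arbitrary, this forces the measure identity $F_q(K_0,\cdot)=c\,\mu$ with $c>0$ (note $I_q(K_0)>0$ because $K_0\in\kno$, and $\|h_{K_0}\!:\!\mu\|>0$ because $\mu$ is nonzero and $h_{K_0}$ is positive).

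Finally, I rescale. By the homogeneity $\rho\lsub{\lambda K,\lambda z}(u)=\lambda\,\rho\lsub{K,z}(u)$ together with $\hm(\lambda A)=\lambda^{n-1}\hm(A)$ and translation/homogeneity of $\wt V_{q-1}$, the definition \eqref{c11} yields
\[
F_q(\lambda K_0,\cdot)\ =\ \lambda^{n+q-2}\,F_q(K_0,\cdot),
\]
and since $q>0$ and $n\ge2$ we have $n+q-2>0$, so choosing $\lambda=c^{-1/(n+q-2)}$ produces a body $K=\lambda K_0\in\kno$ with $F_q(K,\cdot)=\mu$, as required.

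The technical heart of the argument is the upper bound $F(t)\le\Phi_{q,\mu}(K_t)$ that converts the constrained extremal problem into an unconstrained one and allows a legitimate differentiation; once this is in place, the differentiability of $I_q(K_t)$ supplied by Theorem \ref{V3} together with the scaling behavior of $F_q$ makes the remainder essentially bookkeeping. The step I would be most careful with is verifying that $K_t\in\kno$ on a full neighborhood of $0$, and that the derivatives produced by Theorem \ref{V3} are genuine two-sided derivatives (rather than only one-sided), so that the stationarity $F'(0)=0$ is fully justified from the maximum property.
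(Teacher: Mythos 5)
Your proposal is correct and follows essentially the same route as the paper: you define the same auxiliary function $F(t)=\phi(t)=I_q(K_t)^{1/(n+q-1)}/\|h_t:\mu\|$, use the Wulff-shape inequality $h_{K_t}\le h_t$ to show $\phi(t)\le\Phi_{q,\mu}(K_t)\le\phi(0)$, differentiate via Theorem \ref{V3}, and scale by homogeneity of $F_q$. The only cosmetic difference is the passing mention of the identity from Theorem \ref{I9}, which is not actually needed in the computation of $F'(0)=0$.
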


\begin{proof}
	By assumption, the maximization problem has a solution $K_0\in\kno$. The origin being in the interior of $K_0$ implies
	$h_{K_0}>0$.
	
	Suppose $g\in\csn$. Then
	$h_t= h_{K_0} + t g >0$, whenever $|t|<\delta$, for some sufficiently small $\delta >0$.
	Obviously, the function $t\mapsto\Phi_{q,\mu}(K_t)$ attains its maximum at $t=0$.
	Note that $t\mapsto\Phi_{q,\mu}(K_t)$ may not be differentiable
	at $t=0$ (because $t\mapsto\|h_{K_t}\negthinspace:\negthinspace\mu\|$ may not be differentiable at $t=0$). Let
	\[
	\phi(t) = I_q(K_t)^\frac1{n+q-1} / \|h_t\negthinspace:\negthinspace\mu\|.
	\]
	Since the body $K_t$ is the Wulff shape of the function $h_t$ and the measure $\mu$ is non-negative it follows that
	$\|h_{K_t}\negthinspace:\negthinspace\mu\| \le
	\|h_t\negthinspace:\negthinspace\mu\|$, and thus that
	\[
	\phi(t) \le \Phi_{q,\mu}(K_t) \le \Phi_{q,\mu}(K_0) = \phi(0).
	\]
	Thus, $\phi(t)$ attains maximum at $t=0$.
	By Theorem \ref{V3}, the function  $t\mapsto I_q(K_t)$ is differentiable at $t=0$. Of course,
	$t\mapsto\|h_t\negthinspace:\negthinspace\mu\|$ is obviously
	differentiable at $t=0$. Therefore,
	\begin{align*}
		0=&\frac d{d{t}}\phi(t)\Big|_{t=0}\\
		=&\frac{I_q(K_0)^{\frac1{n+q-1}-1}}{(n+q-1) \|h_{K_0}\negthinspace:\negthinspace\mu\|} \int_{\sn} g(v)\, dF_q(K_0,v)
		- \frac{I_q(K_0)^\frac1{n+q-1}}{ \|h_{K_0}\negthinspace:\negthinspace\mu\|^{2}} \int_{\sn} g(v)\, d\mu(v).
	\end{align*}
	Since this hold for all $g\in\csn$, we conclude that
	\[
	\frac1{n+q-1} \frac{\|h_{K_0}\negthinspace:\negthinspace\mu\|}{I_q(K_0)} F_q(K_0, \cdot) = \mu.
	\]
	Now, letting $K=\lambda K_0$ with $\lambda^{n+q-2} = \frac1{n+q-1} \frac{\|h_{K_0}:\mu\|}{I_q(K_0)}$
	gives the desired body.
\end{proof}

For $q>0$, the existence of a solution to the chord Minkowski problem is contained in the following theorem.

\begin{theo}\label{M2}
	Suppose $q>0$.
	If $\mu$ is a non-zero, finite Borel measure on $\sn$, then there exists a convex body $K\in\kn$ so that
	\[
	F_q(K,\cdot) = \mu
	\]
	if and only if
	$\mu$
	is not concentrated on any
	closed hemisphere of $\sn$, and
	\begin{equation}\label{*}
		\int_{\sn} v\, d\mu(v) =0.
	\end{equation}
	
\end{theo}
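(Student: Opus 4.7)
The plan is to derive necessity from results already in hand and prove sufficiency via the maximization problem of Lemma \ref{M1}.

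For necessity, combining Corollary \ref{I9.1} with Lemma \ref{Izzz} gives both the centroid condition $\int v\,d\mu(v)=0$ and that $\mu$ cannot be concentrated on any closed hemisphere (the latter because $S(K,\cdot)\ll F_q(K,\cdot)$ and the surface area measure of any convex body has this property).

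For sufficiency, by Lemma \ref{M1} it suffices to show the supremum
\[
M := \sup_{K \in \kno} \Phi_{q,\mu}(K) = \sup_{K\in\kno} \frac{I_q(K)^{1/(n+q-1)}}{\|h_K:\mu\|}
\]
is attained. First observe that $\Phi_{q,\mu}$ is invariant under dilations (Lemma \ref{Lud03, } gives $I_q(tK) = t^{n+q-1} I_q(K)$, matching the linear scaling of the denominator) and under translations: the Haar measure on $\Ln$ is translation-invariant so $I_q(K+x_0) = I_q(K)$, while the centroid hypothesis $\int v\,d\mu = 0$ gives $\|h_{K+x_0}:\mu\| = \|h_K:\mu\| + x_0\cdot\int v\,d\mu = \|h_K:\mu\|$. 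Thus after normalizing so $\|h_K:\mu\|=1$, one is maximizing $I_q$ on this slice, and translations of any candidate are free.

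The crucial step is a uniform diameter bound for near-maximizing sequences, and this is where the non-concentration hypothesis is essential. The function $u\mapsto \int_{\sn}(u\cdot v)_+\,d\mu(v)$ is continuous on $\sn$ and, by the non-concentration assumption, strictly positive at every point; by compactness
\[
c_0 := \min_{u\in\sn}\int_{\sn}(u\cdot v)_+\,d\mu(v) > 0.
\]
If $K$ contains the origin and $R=h_K(u_K^\ast)=\max_{\sn} h_K$, then $Ru_K^\ast\in K$, so $h_K(v)\ge R(u_K^\ast\cdot v)_+$, yielding $\|h_K:\mu\|\ge Rc_0$. Hence $\|h_K:\mu\|=1$ forces $R\le 1/c_0$.

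Given a maximizing sequence $K_j$, translate each so $0\in K_j$; by translation invariance this preserves both numerator and denominator. The bound above shows $K_j\subset (1/c_0)B^n$ uniformly, so Blaschke selection extracts a Hausdorff-convergent subsequence $K_j\to K_0\in\cn$. The continuity of $I_q$ on $\cn$ (noted after \eqref{c99}) and continuity of $K\mapsto\|h_K:\mu\|$ give $\|h_{K_0}:\mu\|=1$ and $I_q(K_0)=M^{n+q-1}$. Note $M$ is strictly positive (take any ball in $\kno$) and finite (combine the diameter bound with monotonicity $K\subset K'\Rightarrow I_q(K)\le I_q(K')$, which is immediate from \eqref{c99}). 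If $K_0$ lay in a hyperplane, almost every line in $\Ln$ would meet $K_0$ in at most one point, giving $I_q(K_0)=0$, contradicting $M>0$. Hence $K_0\in\kn$; a final translation places it in $\kno$ without changing $\Phi_{q,\mu}(K_0)$, and Lemma \ref{M1} delivers the desired body solving $F_q(K,\cdot)=\mu$.

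The main obstacle is the uniform diameter bound: it is the only place where the non-concentration hypothesis is genuinely used, and it relies on the translation-invariance argument above (which in turn uses the centroid hypothesis) to justify translating each $K_j$ so that the origin sits inside it. The remaining steps—Blaschke selection, continuity of $I_q$, and ruling out lower-dimensional limits—are routine given the monotonicity and continuity properties of chord integrals recorded in Section 3.
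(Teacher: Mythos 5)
Your proof is correct and follows essentially the same route as the paper's. Necessity is handled identically via Lemma \ref{Izzz}; for sufficiency, both arguments establish a uniform bound on the size of a normalized maximizing sequence by exploiting the strict positivity of $u\mapsto\int_{\sn}(u\cdot v)_+\,d\mu(v)$, then invoke Blaschke selection and the continuity of $I_q$ on $\mathcal C^n$, and finally observe that $I_q(K_0)>0$ forces $K_0$ to have nonempty interior. The only cosmetic differences are that the paper normalizes the sequence by $I_q(K_i)=1$ and translates so that the centroid is at the origin (which automatically puts the bodies in $\kno$), whereas you normalize by $\|h_K\negthinspace:\negthinspace\mu\|=1$ and translate so that $0\in K_j$; these are dual normalizations and interchangeable choices of base point.
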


\begin{proof}
	The necessity of the condition is established in Lemma \ref{Izzz}. We proceed to establish  sufficiency.
	We shall demonstrate that
	the maximization problem
	\[
	\sup\{\Phi_{q,\mu}(K) : K\in \kno\}
	\]
	has a solution in $\kno$.
	To that end, suppose $K_i$ is a maximizing sequence for the functional $\Phi_{q,\mu}:\kno\to\rbo$.
	
	Since the measure $\mu$ satisfies the condition \eqref{*}, it follows that $K\mapsto \|h_K\negthinspace\negthinspace:\negthinspace\mu\|$ is a translation invariant function.
	Since $I_q$ is translation invariant, we now see that $\Phi_{q,\mu}$ is translation invariant as well.
	We therefor may assume that the centroid of each of the $K_i$ is at the origin.

	Since $\Phi_{q,\mu}$ is homogeneous of degree $0$,  we may rescale the bodies in our maximizing
	sequence so that $I_q(K_i)=1$, for all $i$.
	So, $K_i$ is a minimizing sequence for the problem,
	\[
	\inf \{\|h_K\negthinspace\negthinspace:\negthinspace\mu\| : I_q(K)=1,\ K\in \kno\}.
	\]
	
	Since $K_i$ is a minimization sequence, we can assume that for the ball $B'$ centered
	at the origin and whose radius $r'$ is such that $I_q(B')=1$,
	we have
	\[
	\|h_{K_i}\negthinspace\negthinspace:\negthinspace\mu\| \le
	\|h_{B'}\negthinspace\negthinspace:\negthinspace\mu\|  = r' |\mu|.
	\]
	where $|\mu| = \mu(\sn)$.
	
	Let $R_i>0$ and $u_i\in \sn$ be such that
	\[
	R_i = \max_{u\in \sn} \rho\lsub{K_i}(u) = \rho\lsub{K_i}(u_i).
	\]
	Then the line segment joining the origin and the point $R_i u_i$ is contained inside of $K_i$.
	This line segment obviously has
	support function $v \mapsto R_i (u_i\cdot v)_+$, and thus,
	\[
	R_i (u_i\cdot v)_+ \le h_{K_i}(v), \ \ \text{ for all } v\in \sn,
	\]
	which implies
	\[
	R_i \int_{\sn} (u_i\cdot v)_+ \, d\mu(v) \le \|h_{K_i}\negthinspace\negthinspace:
	\negthinspace\mu\| \le r' |\mu|.
	\]
	Since $\mu$ is not concentrated on any hemisphere, the function
	\[
	u \mapsto\int_{S^{n-1}} (u\cdot v)_+ d\mu(v)
	\]
	is strictly positive on $S^{n-1}$, and since the function is continuous on compact $S^{n-1}$
	there is a $c>0,$ such that
	\[
	\int_{S^{n-1}} (u\cdot v)_+ d\mu(v) \geq c  > 0\qquad\text{for all $u\in S^{n-1}$.}
	\]
	Thus we have our uniform bound for $R_i$,
	\[
	R_i \le {r'|\mu|}/c.
	\]
	Since the sequence $K_i$ is bounded,
	the Blaschke selection theorem provides a convergent subsequence, which we again denote as $K_{i}$,
	that converges to a compact convex set  $K_0\in\cn$.
	The continuity of $I_{q}$ assures that $I_{q}(K_0) = 1$. But $q>0$, so
	$K_0$ must have nonempty interior as well as having its centroid at the origin.
	Thus, $K_0 \in \kno$.
	Since $K_i$ is a maximizing sequence for $\Phi_{q,\mu}$, the body $K_0$ is a maximizer of
	$\Phi_{q,\mu}$, and Lemma \ref{M1} now gives the desired result.
\end{proof}

\section{  Estimates for Entropy and Chord Integrals}

We shall use a variational method to solve the cone chord Minkowski problem.
The associated optimization problem  involves chord integrals and the
entropy of a convex body relative to a given measure. To solve the optimization problem,
delicate estimates for chord integrals and entropy are needed. This section
contains such estimates.

Let $\mu$ be a Borel measure on $\sn$. The {\it entropy} functional $\eop_\mu:\kno\to\rbo$
is defined for $K\in \kno$,
\[
\eop_\mu(K) = -\frac1{|\mu|}\int_{\sn} \log h_K\, d\mu,
\]
and $\eop_\mu(K)$ is called the entropy
of $K$ with respect to $\mu$. The functional is obviously monotone decreasing with respect to set inclusion. It is obviously continuous.

Assume that the measure $\mu$ satisfies the {\it subspace mass inequality} for index $1<q<n+1$,
\begin{equation} \label{mass1+}
	\frac{\mu(\xi_k\cap S^{n-1})}{|\mu|}  <  \frac{k+\min\{k,q-1\}}{n+q-1},
\end{equation}
for each $\xi_k\in G_{n,k}$, and for each $k=1, \ldots, n-1$.

Inequality \eqref{mass1+} is weaker than the inequality $\mu(\xi_k\cap S^{n-1})\leq \frac kn |\mu|$
satisfied by cone volume measure. A different measure concentration inequality is
satisfied by dual curvature measurers. Estimates for entropy $\eop_\mu(K)$ have been derived
based on measure concentration properties of cone volume measure and dual curvature
measures, see for example, \cite{BLYZ13jams, HLYZ, Zhao17jdg, BLYZZ1}.
Here we derive new estimates for entropy $\eop_\mu(K)$ based on
the concentration properties of chord measures.
The crucial technique for estimating $\eop_\mu(K)$ is a spherical partition which deals with
measure concentration on subspheres, see \cite{BLYZ13jams}, and \cite{HLYZ}
for a somewhat different formulation.

Let $e_1,\cdots, e_n$ be an orthonormal basis in $\mathbb{R}^n$.
For each $\delta\in (0,\frac{1}{\sqrt{n}})$, define
a partition $\{\Omega_{1,\delta}, \ldots, \Omega_{n,\delta}\}$ of $\sn$ by
\begin{equation}\label{pt}
	\Omega_{k,\delta}=\{v\in S^{n-1}: |v\cdot e_k|\geq \delta \text{ and }|v\cdot e_j|<\delta
	\text{ for all } j>k\}, \ \ k=1,\cdots, n.
\end{equation}

Let $\xi_0 = \{0\}$, and
\begin{equation*}
	\xi_k = \Span\{e_1,\cdots, e_k\}, \quad k=1,\cdots, n.
\end{equation*}

It was shown in \cite{BLYZ13jams} that
for any non-zero finite Borel measure $\mu$ on $S^{n-1}$,
\begin{equation}\label{pt1}
	\lim_{\delta\rightarrow 0^+}\mu(\Omega_{k,\delta})
	=\mu((\xi_k\setminus \xi_{k-1})\cap S^{n-1}),
\end{equation}
and, therefore,
\begin{equation}\label{pt2}
	\lim_{\delta\rightarrow 0^+}\big(\mu(\Omega_{1,\delta}) + \cdots +
	\mu(\Omega_{k,\delta})\big)=\mu(\xi_k\cap S^{n-1}).
\end{equation}

The following elementary lemma from \cite{BLYZZ1} will be used.

\begin{lemm} \cite[Lemma 4.1]{BLYZZ1} \label{lm4.1}
	Suppose $\lambda_1,...,\lambda_m\in[0,1]$ are such that
	\begin{equation*}
		\lambda_1+\cdots +\lambda_m = 1.
	\end{equation*}
	Suppose further that $a_1\le  \cdots \le a_m$ are real numbers.
	Assume there exists $\sigma_0,
	\ldots,
	\sigma_m$
	$\in[0,\infty)$, with
	$\sigma_0=0$, and  $\sigma_m=1$, such that
	\begin{equation*}
		\lambda_1+\cdots+\lambda_k\le \sigma_k, \quad {\rm for}~k=1,...,m.
	\end{equation*}
	Then
	\begin{equation*}
		\sum_{k=1}^m\lambda_ka_k\ge \sum_{k=1}^m(\sigma_k-\sigma_{k-1})a_k.
	\end{equation*}
\end{lemm}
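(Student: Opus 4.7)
The plan is to prove this by summation by parts (Abel summation). Set $\Lambda_k = \lambda_1+\cdots+\lambda_k$ with the convention $\Lambda_0=0$, so that $\Lambda_m=1$ and $\lambda_k=\Lambda_k-\Lambda_{k-1}$. By hypothesis $\Lambda_k \le \sigma_k$ for each $k=1,\dots,m$, while $\Lambda_0=\sigma_0=0$ and $\Lambda_m=\sigma_m=1$.

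Applying Abel summation to both sides of the desired inequality, I would rewrite
\begin{equation*}
\sum_{k=1}^m \lambda_k a_k \;=\; \sum_{k=1}^m (\Lambda_k-\Lambda_{k-1})a_k \;=\; \Lambda_m a_m \;-\; \sum_{k=1}^{m-1}\Lambda_k(a_{k+1}-a_k),
\end{equation*}
and analogously
\begin{equation*}
\sum_{k=1}^m (\sigma_k-\sigma_{k-1})a_k \;=\; \sigma_m a_m \;-\; \sum_{k=1}^{m-1}\sigma_k(a_{k+1}-a_k).
\end{equation*}
Since $\Lambda_m=\sigma_m=1$, the boundary terms agree, so the inequality to be proved reduces to
\begin{equation*}
\sum_{k=1}^{m-1}(\sigma_k-\Lambda_k)(a_{k+1}-a_k)\;\ge\; 0.
\end{equation*}

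Each factor $\sigma_k-\Lambda_k$ is nonnegative by hypothesis, and each $a_{k+1}-a_k$ is nonnegative by the monotonicity $a_1\le\cdots\le a_m$. Thus the sum is termwise nonnegative, which completes the proof.

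There is no real obstacle: the argument is pure bookkeeping. The only point to take care of is that the \emph{endpoint} conditions $\Lambda_0=\sigma_0=0$ and $\Lambda_m=\sigma_m=1$ are what make the two boundary terms of Abel summation cancel — without the normalization $\sigma_m=1$ matching $\Lambda_m=1$, one would pick up an extra term $(\sigma_m-1)a_m$ that could have either sign.
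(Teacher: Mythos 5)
Your proof is correct, and Abel summation is exactly the natural (and standard) route for this kind of rearrangement inequality. The paper itself only cites the lemma from \cite{BLYZZ1} without reproducing a proof, but the argument there is the same summation-by-parts bookkeeping: write $\Lambda_k=\lambda_1+\cdots+\lambda_k$, use $\Lambda_0=\sigma_0$ and $\Lambda_m=\sigma_m$ to cancel the boundary terms, and conclude from $\sigma_k-\Lambda_k\ge 0$ and $a_{k+1}-a_k\ge 0$ termwise. Your remark about why the endpoint normalization $\sigma_m=1$ is essential is apt and shows you understand why the hypothesis is stated as it is. One small observation: the hypothesis $\sigma_k\in[0,\infty)$ for $1\le k\le m-1$ is never used in your argument (only $\sigma_k\ge\Lambda_k$ matters), so your proof actually establishes a marginally more general statement — which is fine, and worth noticing.
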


The following lemma gives the entropy estimate that is needed for solving
the chord log-Minkowski problem.

\begin{lemm}\label{e-0}
	Suppose  $1\le q \le n+1$.
	Suppose $\mu$ is a finite Borel measure on $\sn$
	satisfying the subspace mass inequality
	\begin{equation*}
		\frac{\mu(\xi_k\cap S^{n-1})}{|\mu|}  <  \frac{k+\min\{k,q-1\}}{n+q-1},
	\end{equation*}
	for each $\xi_k\in G_{n,k}$, and for each $k=1, \ldots, n-1$.
	Suppose further that a sequence $(e_{1l},..., e_{nl})$, for
	$l=1, 2,...$, of ordered orthonormal bases of $\rn$, converges to
	the ordered orthonormal basis $(e_1,..., e_n)$.
	If $\varepsilon_0>0$, and $E_l$ is the sequence of ellipsoids
	\[
	E_l = \Big\{ x \in \rn: \frac{(x\cdot e_{1l})^2}{a_{1,l}^2}
	+ \cdots + \frac{(x\cdot e_{nl})^2}{a_{n,l}^2} \leq 1\Big\},
	\]
	with
	\[
	\text{$0<a_{1,l}\le a_{2,l} \le \cdots \le a_{n,l}$,
		\quad and\ \  $a_{n,l}\ge \varepsilon_0$},
	\]
	for all $l$, then there exist $l_0$, $t_0>0$ and $c_0\in\rbo$ such that for each $l>l_0$,
	\begin{multline}\label{e-1}
		\eop_\mu(E_l) \leq -\frac{2 \log (a_{1,l}\cdots a_{\lfloor{q}\rfloor-1, l})}{n+q-1}
		- \frac{q-\lfloor q\rfloor+1}{n+q-1} \log  a_{\lfloor q \rfloor,l}
		-\frac{\log (a_{\lfloor q \rfloor+1, l} \cdots a_{n,l})}{n+q-1} \\ + t_0\log a_{1,l} + c_0.
	\end{multline}
\end{lemm}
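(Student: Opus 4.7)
The plan is a spherical partition argument based on Lemma \ref{lm4.1}, refined so as to exploit the \emph{strict} inequality in the subspace mass hypothesis.

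First I would partition $\sn$ using the sets $\Omega_{k,\delta}$ of \eqref{pt} built from the limit basis $(e_1,\ldots,e_n)$. Since $e_{kl}\to e_k$, for $l$ large enough one has $|v\cdot e_{kl}|\ge \delta/2$ whenever $v\in\Omega_{k,\delta}$; then the ellipsoid support function formula $h_{E_l}(v)^2 = \sum_i a_{i,l}^2 (v\cdot e_{il})^2$ yields the pointwise lower bound $h_{E_l}(v) \ge (\delta/2)\,a_{k,l}$ on $\Omega_{k,\delta}$. Writing $\lambda_k := \mu(\Omega_{k,\delta})/|\mu|$ and integrating $-\log h_{E_l}$ against $\mu/|\mu|$ over the partition produces the Riemann-sum upper bound
\[
\eop_\mu(E_l) \le -\sum_{k=1}^n \lambda_k \log a_{k,l} - \log(\delta/2).
\]

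Next, set $\sigma_k := (k + \min\{k, q-1\})/(n+q-1)$. A routine case analysis in $k$ (splitting at $k=\lfloor q\rfloor$) shows that $\sigma_0=0$, $\sigma_n=1$, and that the increments $\sigma_k-\sigma_{k-1}$ reproduce exactly the coefficients of $\log a_{k,l}$ in the target. By \eqref{pt2}, $\lambda_1+\cdots+\lambda_k \to \mu(\xi_k\cap\sn)/|\mu|$ as $\delta\to 0$, which by hypothesis is strictly less than $\sigma_k$ for each $k=1,\ldots,n-1$. Taking the (finite) minimum of these slacks, one fixes $\delta$ small and $l_0$ large so that
\[
\lambda_1+\cdots+\lambda_k \le \sigma_k - t_0, \qquad k=1,\ldots,n-1,
\]
for all $l\ge l_0$, with some $t_0>0$, while $\sum_k \lambda_k = 1$. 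I now apply Lemma \ref{lm4.1} to the ordered real numbers $\log a_{1,l}\le\cdots\le\log a_{n,l}$ using the shifted thresholds $\sigma_k^\ast := \sigma_k-t_0$ for $k<n$ and $\sigma_n^\ast := 1$. The shift concentrates at the endpoints: $\sigma_1^\ast - \sigma_0 = (\sigma_1-\sigma_0)-t_0$, $\sigma_n^\ast - \sigma_{n-1}^\ast = (\sigma_n-\sigma_{n-1})+t_0$, and $\sigma_k^\ast - \sigma_{k-1}^\ast = \sigma_k-\sigma_{k-1}$ otherwise, so
\[
\sum_k \lambda_k \log a_{k,l} \ge \sum_k (\sigma_k-\sigma_{k-1})\log a_{k,l} + t_0\bigl(\log a_{n,l} - \log a_{1,l}\bigr).
\]
Combining this with the Riemann-sum bound and using $a_{n,l}\ge\varepsilon_0$ to absorb $-t_0\log a_{n,l}\le -t_0\log\varepsilon_0$ into a constant yields the advertised estimate with $c_0 := -t_0\log\varepsilon_0 - \log(\delta/2)$.

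The main obstacle is arranging the perturbation in Lemma \ref{lm4.1} so that the leftover slack lands precisely as $t_0(\log a_{n,l}-\log a_{1,l})$: the naive choice of shifting only $\sigma_1$ would produce a $-t_0\log a_{2,l}$ remainder, which cannot be absorbed because no lower bound on $a_{2,l}$ is available. Placing the compensating positive increment at the last step via $\sigma_n^\ast = 1$ is exactly what converts the hypothesis $a_{n,l}\ge\varepsilon_0$ into a usable constant, and it is essential that the strict subspace mass inequalities be consolidated into a single uniform slack $t_0$ before invoking Lemma \ref{lm4.1}.
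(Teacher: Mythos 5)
Your proposal is correct and follows essentially the same route as the paper's proof: the same spherical partition $\{\Omega_{k,\delta}\}$ built from the limit basis, the same lower bound $h_{E_l}(v)\ge a_{k,l}\delta/2$ on $\Omega_{k,\delta}$ for $l$ large (the paper derives it from $a_{k,l}e_{kl}\in E_l$ rather than the formula for $h_{E_l}^2$, but it is the same estimate), the same application of Lemma~\ref{lm4.1} with thresholds $\sigma_k=\frac{k+\min\{k,q-1\}}{n+q-1}$ shifted down by a uniform slack $t_0$ for $1\le k\le n-1$, and the same absorption of $-t_0\log a_{n,l}$ via $a_{n,l}\ge\varepsilon_0$. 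The only cosmetic difference is that the paper bakes the $-t_0$ shift directly into its definition of $\sigma_k$, whereas you first define the unshifted $\sigma_k$ and then pass to $\sigma_k^\ast=\sigma_k-t_0$; the increment computation and the resulting bound are identical.
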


The first term of \eqref{e-1} does not appear if $\lfloor q\rfloor=1$,
the second term of \eqref{e-1} does not appear if $q=n+1$,
and the third term of \eqref{e-1} does not appear if $\lfloor q\rfloor\ge n$.

\begin{proof}
	For each $\delta\in (0,1/\sqrt n)$, let
	$\{\Omega_{1,\delta},\ldots, \Omega_{n,\delta}\}$ be the partition of $S^{n-1}$
	defined by \eqref{pt}.
	Denote  $\lambda_{k,\delta} = \mu (\Omega_{k,\delta})/|\mu|$.
	Then, by \eqref{pt2} and \eqref{mass1+}, we have
	\begin{equation*}
		\lim_{\delta\rightarrow0^+} (\lambda_{1,\delta}+\cdots+\lambda_{k,\delta})
		= \frac{\mu(\xi_k\cap S^{n-1})}{|\mu|}< \frac{k +\min\{k, q-1\}}{n+q-1}.
	\end{equation*}
	Choose $t_0,\delta_0>0$, such that for each $k=1,...,n-1,$
	\begin{equation*}
		\lambda_{1,\delta_0}+\cdots+\lambda_{k,\delta_0}
		< \frac{k +\min\{k, q-1\}}{n+q-1} - t_0.
	\end{equation*}
	Let $\sigma_n=1$ and $\sigma_0=0$ and $\sigma_k = \frac{k +\min\{k, q-1\}}{n+q-1} - t_0$, for $k=1, \ldots, n-1$.
	Then
	\begin{align}
		\sigma_1 -\sigma_0 &= \frac{\min\{2,q\}}{n+q-1} - t_0, \label{s-df1}\\
		\sigma_k -\sigma_{k-1}&=\begin{cases} \frac2{n+q-1} &\text{if }k<\lfloor q\rfloor \\
			\frac{q-\lfloor q\rfloor+1}{n+q-1} &\text{if }k=\lfloor q\rfloor \\
			\frac1{n+q-1} &\text{if }k>\lfloor q\rfloor \end{cases},  \ \  k=2,\ldots, n-1, \label{s-df}\\
		\sigma_n -\sigma_{n-1}&=\frac{1-\min\{n-q, 0\}}{n+q-1}+t_0.  \label{s-df2}
	\end{align}
	
	Since $(e_{1l},...,e_{nl})$ converges to $(e_1,...,e_n)$, we may choose $l_0>0$,
	so that $|e_{kl}-e_k| < \delta_0/2$ for each $l>l_0$ and each $k\in \{1,...,n\}$.
	Since $a_{k,l}e_{kl} \in E_l$,
	for $v\in\Omega_{k,\delta_0}$  we have
	\[
	h_{E_l}(v) \geq  |v\cdot (a_{k,l} e_{kl})| \geq a_{k,l}(|v\cdot  e_{k}|-|e_{kl}-e_{k}|)
	\geq a_{k,l}\frac{\delta_0}2.
	\]
	This  and Lemma \ref{lm4.1}, allow us to deduce that for $l>l_0$,
	\begin{align*}
		\eop_\mu(E_l) &= -\frac1{|\mu|} \int_{\sn} \log h_{E_l}\, d\mu \\
		&\leq -\sum_{k=1}^n  \frac{\mu(\Omega_{k,\delta_0})}{|\mu|} \log \Big(a_{k,l}\frac{\delta_0}2\Big)  \\
		&\leq -\log\frac{\delta_0}2 - \sum_{k=1}^n \lambda_{k,\delta_0} \log a_{k,l} \\
		&\leq -\log\frac{\delta_0}2 - \sum_{k=1}^n (\sigma_k - \sigma_{k-1})\log a_{k,l}.
	\end{align*}
	Consequently, by using \eqref{s-df1}, \eqref{s-df} and \eqref{s-df2},  we have
	\begin{multline}\label{s-df3}
		\eop_\mu(E_l) \leq -\log\frac{\delta_0}2 -\frac{\min\{2,q\}}{n+q-1}\log a_{1,l} + t_0\log a_{1,l}
		-\frac2{n+q-1} \sum_{k=2}^{\lfloor q\rfloor-1}\log a_{k,l}  \\
		-  \frac{q-\lfloor q\rfloor+1}{n+q-1}\log a_{\lfloor q\rfloor,l}
		- \frac1{n+q-1}\sum_{k=\lfloor q\rfloor +1}^{n-1} \log a_{k,l}
		-\frac{1-\min\{n-q,0\}}{n+q-1} \log a_{n,l} \\ - t_0\log a_{n,l}, \end{multline}
	where obviously each of the two sums does not appear if its lower limit is larger than its upper limit,
	and the term containing $a_{\lfloor q\rfloor,l}$ does not appear if $q \notin [2, n)$.
	Note that since obviously
	\[
	q-\lfloor q\rfloor +1 = \begin{cases} \min\{2, q\}  &\text{if } \lfloor q\rfloor =1 \\
		1-\min\{n-q, 0\}  &\text{if } \lfloor q\rfloor=n, \end{cases}
	\]
	we can rewrite \eqref{s-df3} as
	\begin{multline}\notag
		\eop_\mu(E_l) \leq  -\frac2{n+q-1} \sum_{k=1}^{\lfloor q\rfloor-1}\log a_{k,l}
		-  \frac{q-\lfloor q\rfloor+1}{n+q-1}\log a_{\lfloor q\rfloor,l}
		 \\ - \frac1{n+q-1}\sum_{k=\lfloor q\rfloor +1}^{n} \log a_{k,l}
		-\log\frac{\delta_0}2  + t_0\log a_{1,l}   - t_0\log a_{n,l}, \end{multline}
	where again each of the two sums does not appear if its lower limit is larger than its upper limit,
	and the term containing $a_{\lfloor q\rfloor,l}$ does not appear if $q=n+1$.
	This and the assumption that  $a_{nl}\geq \varepsilon_0$ give the desired result.
\end{proof}

The following lemma gives an estimate for the chord integral $I_{q}(E)$
when $q\in (1,n+1)$ is a non-integer and $E$ is an ellipsoid.

\begin{lemm} \label{e-2}
	Suppose $q\in (1,n+1)$ is not an integer.
	If $E$ is the ellipsoid in $\rn$ given by
	\begin{equation*}
		E=E(a_1,\ldots,a_n) = \Big\{ x \in \rn: \frac{(x\cdot e_{1})^2}{a_{1}^2} + \cdots
		+ \frac{(x\cdot e_{n})^2}{a_{n}^2} \leq 1 \Big\}
	\end{equation*}
	with $0<a_{1}\le a_{2} \le \cdots \le a_{n}\leq 1$, then for any real $q$ and integer $m$
	such that $1\le m < q < m+1 \le n+1$,
	\[
	I_{q}(E)\leq c_{q,m,n}  (a_1\cdots a_m)^2a_m^{q-m-1}  a_{m+1}\cdots a_n,
	\]
	where $c_{q,m,n}$ is a constant that depends only on $q$ and $n$ (since $m=\lfloor q \rfloor$) and is given by
	\begin{equation}
		c_{q,m,n} = \begin{cases} \dfrac{2^{q-n+2}q(q-1)\omega_{n-1}^2}{(q-n)(q-n+1)n\omega_n}   &m=n \\
			\dfrac{2^{n-m+3}q(q-1)(n-m)\omega_{m-1}^2\omega_{n-m}^2}{(m+1-q)(q-m)(q-m+1)n\omega_n}  &m< n.
		\end{cases} \label{cqmp}
	\end{equation}
\end{lemm}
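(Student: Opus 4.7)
The plan is to start from the representation
\[
I_q(E) = \frac{q(q-1)}{n\omega_n}\int_E\int_E |x-z|^{q-n-1}\,dx\,dz
\]
of Lemma~\ref{I1} and pull both integrations back to the unit ball $B^n$ via $x=Ay$, $z=Aw$ with $A=\mathrm{diag}(a_1,\ldots,a_n)$. Substituting $u=y-w\in 2B^n$, bounding $|B^n\cap(u+B^n)|\le\omega_n$, and then writing $u=r\theta$ in polar coordinates collapses the double integral into
\[
I_q(E)\le\frac{q\,2^{q-1}}{n}\,(a_1\cdots a_n)^2\,J,
\qquad J:=\int_{S^{n-1}}|A\theta|^{q-n-1}\,d\theta.
\]
It suffices to show $J\le C\,a_m^{q-m-1}/(a_{m+1}\cdots a_n)$, for then the prefactor $(a_1\cdots a_n)^2$ recombines into the desired $(a_1\cdots a_m)^2 a_m^{q-m-1}(a_{m+1}\cdots a_n)$.

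To estimate $J$, I would discard the axes of index below $m$, since the integrand is most singular along those directions. Using $a_i\ge a_m$ for all $i\ge m$ gives
\[
|A\theta|^2 \ \ge\ a_m^2\theta_m^2+\sum_{i=m+1}^n a_i^2\theta_i^2 \ =:\ |\tilde A\tilde\theta|^2,
\]
with $\tilde\theta=(\theta_m,\ldots,\theta_n)\in\mathbb{R}^{n-m+1}$ and $\tilde A=\mathrm{diag}(a_m,\ldots,a_n)$; the inequality $|A\theta|^{q-n-1}\le|\tilde A\tilde\theta|^{q-n-1}$ follows because $q-n-1<0$. For the generic range $2\le m\le n-1$, I would parametrize $S^{n-1}$ by $\tau=|\tilde\theta|\in[0,1]$, $\tilde\theta=\tau\xi$ with $\xi\in S^{n-m}$, and $\hat\theta=\sqrt{1-\tau^2}\,\omega$ with $\omega\in S^{m-2}$; the resulting area element is $(1-\tau^2)^{(m-3)/2}\tau^{n-m}\,d\omega\,d\xi\,d\tau$. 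Since $|\tilde A\tilde\theta|=\tau|\tilde A\xi|$, the $\omega$-integral contributes the surface area of $S^{m-2}$ and the $\tau$-integral collapses to $\tfrac{1}{2}B\!\left(\tfrac{q-m}{2},\tfrac{m-1}{2}\right)$, finite precisely because $m<q<m+1$ and $m\ge 2$.

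The heart of the argument is then the reduced integral $\int_{S^{n-m}}|\tilde A\xi|^{q-n-1}d\xi$. Multiplying by $e^{-|\xi|^2}$ and changing variables $\eta=\tilde A\xi$ converts it into
\[
\int_{S^{n-m}}|\tilde A\xi|^{q-n-1}\,d\xi
=\frac{1}{a_m a_{m+1}\cdots a_n}\int_{S^{n-m}}\tilde\psi(\xi)^{-(q-m)/2}\,d\xi,
\qquad \tilde\psi(\xi)=\sum_{i=m}^n \xi_i^2/a_i^2.
\]
Keeping only the $i=m$ term of $\tilde\psi$ yields the pointwise bound $\tilde\psi(\xi)\ge\xi_m^2/a_m^2$, whence $\tilde\psi^{-(q-m)/2}\le a_m^{q-m}|\xi_m|^{-(q-m)}$; the integral $\int_{S^{n-m}}|\xi_m|^{-(q-m)}d\xi$ reduces via $\xi_m=\cos\phi$ to $B\!\left(\tfrac{n-m}{2},\tfrac{m-q+1}{2}\right)$, which is finite exactly because $q<m+1$. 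Combining the two spherical integrations produces $\int_{S^{n-m}}|\tilde A\xi|^{q-n-1}d\xi\le C\,a_m^{q-m-1}/(a_{m+1}\cdots a_n)$, completing the bound on $J$ and hence on $I_q(E)$.

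The boundary cases $m=1$ and $m=n$ are handled by the same scheme with one fewer layer: for $m=n$ one uses only $|A\theta|^2\ge a_n^2\theta_n^2$ and evaluates $\int_{S^{n-1}}|\theta_n|^{q-n-1}d\theta$ directly as a Beta function, while for $m=1$ no outer split is needed and one applies the Gaussian/Beta estimate directly on $S^{n-1}$. The main obstacle is bookkeeping: one must track Gamma/Beta factors through the two nested spherical integrations and verify that they recombine into the explicit constant $c_{q,m,n}$ of \eqref{cqmp}, using $B(p,q)=\Gamma(p)\Gamma(q)/\Gamma(p+q)$ together with $\Gamma(x+1)=x\Gamma(x)$ at the half-arguments $(q-m)/2$ and $(m-q+1)/2$; these identities supply the three denominator factors $(q-m)$, $(q-m+1)$, and $(m+1-q)$ that appear in $c_{q,m,n}$.
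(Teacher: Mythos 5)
Your overall strategy is sound and genuinely different from the paper's. The paper proceeds by nested box containments $E\subset D\times J$ with $D\subset\xi_{m-1}$ an $(m-1)$-dimensional ellipsoid and $J$ an $(n-m+1)$-dimensional one (and then a further split $J\subset N\times M$ with $N=[-a_m,a_m]$), using the elementary inequalities $|x+y|\ge|y|$ and $|(t,z)|\ge(|t|+|z|)/2$ for orthogonal splittings, so that the double Riesz integral factors and can be computed with one-variable calculus. You instead pull everything back to the unit ball by the diagonal map $A$, collapse the double integral to $\int_{2B^n}|Au|^{q-n-1}|B^n\cap(B^n-u)|\,du\le\omega_n\int_{2B^n}|Au|^{q-n-1}du$, pass to polar coordinates, and then estimate the spherical integral $J=\int_{S^{n-1}}|A\theta|^{q-n-1}d\theta$ by a nested spherical parametrization plus the Gaussian change-of-variables identity $\int_{S^{n-m}}|\tilde A\xi|^{q-n-1}d\xi=(\det\tilde A)^{-1}\int_{S^{n-m}}|\tilde A^{-1}\xi|^{-(q-m)}d\xi$. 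I checked your two Beta-function reductions and the exponents in the identity; they are correct, and all integrals converge precisely on $m<q<m+1$ as you say. What the paper's route buys is that everything reduces to one-dimensional elementary antiderivatives and the explicit constant \eqref{cqmp} drops out with no special-function bookkeeping; what your route buys is a cleaner conceptual separation (``all the anisotropy lives in one spherical integral'') and a neat explanation, via the $\Gamma$-factors $\Gamma(\tfrac{q-m}{2})$ and $\Gamma(\tfrac{m-q+1}{2})$, of the singularities of the constant at the integer endpoints.

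One point you should not gloss over: your proof does \emph{not} reproduce the explicit constant $c_{q,m,n}$ of \eqref{cqmp}. Tracking your factors gives an overall constant of the form $\frac{q\,2^{q-1}}{n}\,(m-1)\omega_{m-1}\cdot\tfrac12 B\!\left(\tfrac{q-m}{2},\tfrac{m-1}{2}\right)\cdot (n-m)\omega_{n-m}\,B\!\left(\tfrac{n-m}{2},\tfrac{m-q+1}{2}\right)$ (for $2\le m\le n-1$), which is a genuinely different function of $q,n$ than the rational expression in \eqref{cqmp}; the Beta functions do not ``recombine'' into the paper's constant, and the sentence in your last paragraph claiming they do is false. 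Since the downstream Lemmas only use that the constant depends on $q$ and $n$ alone, this costs nothing in the applications, but as a proof of the Lemma as literally stated — which asserts a specific formula for $c_{q,m,n}$ — your argument proves a variant with a different constant. If you wanted to match \eqref{cqmp} exactly you would have to follow the paper's box-decomposition route.
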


\begin{proof}
	Let $D$ and $J$ be the ellipsoids in $\R^{m-1}$ and $\R^{n-m+1}$ defined by
	\begin{align*}
		D&=\Big\{ x \in \R^{m-1}: \frac{(x\cdot e_{1})^2}{a_{1}^2} + \cdots
		+ \frac{(x\cdot e_{m-1})^2}{a_{m-1}^2} \leq 1\Big\}, \\
		J&=\Big\{ x \in \R^{n-m+1}:  \frac{(x\cdot e_{m})^2}{a_{m}^2} + \cdots
		+ \frac{(x\cdot e_{n})^2}{a_{n}^2} \leq 1\Big\},
	\end{align*}
	where letting $D= \{0\}$ if $m=1$.
	
	Since $E$ is contained in   $D\times J$, obviously $I_{q}(E) \leq I_{q}(D\times J)$.
	
	If $(x_1,y_1), (x_2, y_2) \in \R^{m-1}\times \R^{n-m+1}$, it follows that
	$|x_1-x_2+y_1-y_2|\ge |y_1-y_2|$,
	since $x_1-x_2$ and $y_1-y_2$ are in complementary subspaces. Therefore, using \eqref{c6.3.1}, we have
	\begin{align}
		I_{q}(E) &\leq \frac{q(q-1)}{n\omega_n} \int_{x_1,x_2\in D}\int_{y_1,y_2\in J}
		|x_1-x_2+y_1-y_2|^{q-1-n} dy_1dy_2dx_1dx_2 \notag\\
		&\leq \frac{q(q-1)}{n\omega_n}
		\int_{x_1,x_2\in D}\int_{y_1,y_2\in J}
		|y_1-y_2|^{q-1-n}  dy_1dy_2dx_1dx_2\notag\\
		&= \frac{q(q-1)}{n\omega_n} \vv_{m-1}(D)^2 \int_{y_1,y_2\in J} |y_1-y_2|^{q-1-n} dy_1dy_2, \notag\\
		&= \frac{q(q-1)\omega_{m-1}^2}{n\omega_n}(a_1\cdots a_{m-1})^2
		\int_{y_1,y_2\in J}|y_1-y_2|^{q-1-n} dy_1dy_2,     \label{e-Iq1}
	\end{align}
	where $\vv_{m-1}(D)$ is the $(m-1)$-dimensional volume of $D\subset \R^{m-1}$ with $\vv_{0}(D)=1$, if $m=1$.
	
	Next we estimate the last integral, which is the chord power integral of $J$
	in $\R^{n-m+1}$. We consider the two cases $m=n$ and $m\le n-1$ separately.
	\vskip 5pt
	\noindent{\it Suppose $m = n$.} Then $J=[-a_n,a_n]$. An elementary calculation gives that
	\begin{align}
		\int_{y_1, y_2\in J} |y_1-y_2|^{q-1-n} dy_1dy_2
		=&\int_{-a_n}^{a_n}\int_{-a_n}^{a_n} |y_1-y_2|^{q-1-n} dy_1dy_2 \notag \\
		=& \frac{1}{q-n}  \int_{-a_n}^{a_n}    \big(  (a_n+y_2)^{q-n} + (a_n-y_2)^{q-n}\big)dy_2\notag\\
		=&\frac{2^{q-n+2}}{(q-n)(q-n+1)}a_n^{q-n+1}.   \label{e-Iq2}
	\end{align}
	\vskip 5pt
	
	\noindent{\it Assume $m \le n-1$.}
	Note that $J \subset N\times M$, where
	\[
	N=[-a_m,a_m], \ \ \ M =\Big\{ x \in \R^{n-m}:  \frac{(x\cdot e_{m+1})^2}{a_{m+1}^2} + \cdots
	+ \frac{(x\cdot e_{n})^2}{a_{n}^2} \leq 1\Big\}.
	\]
	Since $0<a_{1}\le a_{2} \le \cdots \le a_{n}\leq 1$, we conclude that
	\[
	M-z\subset 2B^{n-m}, \ \ \ \text{for any }\ z\in M.
	\]
	Note that for $(t_1,z_1), (t_2, z_2) \in \R\times \R^{n-m}$,
	\[
	|(t_1-t_2,z_1-z_2)| = \Big(|t_1-t_2|^2 + |z_1-z_2|^2 \Big)^{1/2}
	\geq   (|t_1-t_2| + |z_1-z_2|)/2.
	\]
	The last observation, a change of variable $z'=z_1-z_2$, the fact that  $M-z_2\subset 2B^{n-m}$,
	switching to polar coordinates, the obvious $|t_1-t_2|+r\ge r$ and $n-m-1\geq 0$,  give
	\begin{align} &\int_{y_1, y_2\in J} |y_1-y_2|^{q-1-n} dy_1dy_2 \notag \\
		=&  \int_{t_1, t_2\in N} \int_{z_1, z_2\in M}|(t_1-t_2,z_1-z_2)|^{q-1-n} dz_1dz_2dt_1dt_2  \notag\\
		\leq& 2^{n+1-q}\int_{t_1, t_2\in N} \int_{z_1, z_2\in M}
		\big(|t_1-t_2|+|z_1-z_2|\big)^{q-1-n}dz_1dz_2dt_1dt_2\notag\\
		=& 2^{n+1-q}\int_{t_1, t_2\in N}\int_{z_2\in M}\int_{z'\in M-z_2}
		\big(|t_1-t_2|+|z'|\big)^{q-1-n}dz'dz_2dt_1dt_2\notag\\
		\le& 2^{n+1-q}\int_{t_1, t_2\in N}\int_{z_2\in M}\int_{2B^{n-m}}
		\big(|t_1-t_2|+|z'|\big)^{q-1-n}dz'dz_2dt_1dt_2\notag\\
		=&  \text{\scriptsize $2^{n+1-q}(n-m)\omega_{n-m}\vv_{n-m}(M)$}  \int_{t_1, t_2\in N} \int_0^2
		\big(|t_1-t_2| + r \big)^{q-1-n}r^{n-m-1}dr dt_1dt_2\notag\\
		\leq& \text{\scriptsize $2^{n+1-q}(n-m)\omega_{n-m}\vv_{n-m}(M)$} \int_{t_1, t_2\in N} \int_0^2
		\big(|t_1-t_2|+r\big)^{q-m-2}dr dt_1dt_2\notag\\
		=&\text{\scriptsize $\frac{(n-m)\omega_{n-m}\vv_{n-m}(M) }{2^{q-n-1}(m+1-q)}$}\int_{t_1, t_2\in N}
		\big(|t_1-t_2|^{q-m-1}-\big(|t_1-t_2|+2\big)^{q-m-1}\big)dt_1dt_2\notag\\
		\leq& \text{\scriptsize $\frac{(n-m)\omega_{n-m}\vv_{n-m}(M) }{2^{q-n-1}(m+1-q)}$}\int_{t_1, t_2\in N}
		|t_1-t_2|^{q-m-1}  dt_1dt_2.
		\label{e-Iq3}
	\end{align}
	Similar to the calculation of \eqref{e-Iq2}, we have
	\[ \int_{t_1, t_2\in N} |t_1-t_2|^{q-m-1}dt_1dt_2
	= \frac{2^{q-m+2}}{(q-m)(q-m+1)}a_m^{q-m+1}.\]
	This and \eqref{e-Iq3} give that
	\begin{multline}
		\int_{y_1, y_2 \in J}|y_1-y_2|^{q-1-n} dy_1dy_2 \\
		\leq
		\frac{2^{n-m+3}(n-m)\omega_{n-m}^2}{(m+1-q)(q-m)(q-m+1)} a_m^{q-m+1}a_{m+1}\cdots a_n.
		\label{e-Iq4}
	\end{multline}
	Combining \eqref{e-Iq1},  \eqref{e-Iq2},
	and \eqref{e-Iq4}, we obtain
	\[
	I_{q}(E) \leq  c_{q,m,n} (a_1\cdots a_m)^2a_m^{q-m-1}a_{m+1}\cdots a_n,
	\]
	where
	\begin{equation*}
		c_{q,m,n} = \begin{cases} \dfrac{2^{q-n+2}q(q-1)\omega_{n-1}^2}{(q-n)(q-n+1)n\omega_n}   &m=n \\
			\dfrac{2^{n-m+3}q(q-1)(n-m)\omega_{m-1}^2\omega_{n-m}^2}{(m+1-q)(q-m)(q-m+1)n\omega_n}  &m\le n-1.
		\end{cases}
	\end{equation*}
\end{proof}

\begin{rema}
	\rm
	Note that the estimate in Lemma \ref{e-2} is sharp up to a constant (depending only on $q$ and $n$). To see this
	recall formula \eqref{c6.1},
	\[
	I_{q}(E) = \frac1{n\omega_n} \int_{\sn}\int_{E_u} X_E(x,u)^{q} dx du,
	\]
	where $E_u$ denotes the projection of $E$ onto $u^\perp$.
	For $q>1$, Jensen's inequality gives
	\[
	\frac{1}{\vv(E_u)}\int_{E_u} X_E(x,u)^{q} dx \geq
	\Big(\frac{1}{\vv(E_u)}\int_{E_u} X_E(x,u) dx\Big)^{q}=\Big(\frac{V(E)}{\vv(E_u)}\Big)^{q}.
	\]
	Note that $V(E)=\omega_na_1\cdots a_n$. Since $a_1 \le \cdots \le a_n$, we have  $\vv(E_u) \leq \omega_{n-1}
	 a_2\cdots a_n$.
	Hence,
	\[ I_{q}(E) \geq   \frac1{n\omega_n}  \int_{\sn}V(E)^{q} \vv(E_u)^{-q+1}du
	\geq \frac{\omega_{n-1}^{1-q}}{n\omega_n^{1-q}} (a_2\cdots a_n) a_1^{q}.\]
	When $a_1=\cdots = a_m$, then
	\[
	(a_2\cdots a_n) a_1^{q} =  (a_1\cdots a_m)^2 a_m^{q-m-1} a_{m+1}\cdots a_n,
	\]
	and hence, the ellipsoids $E=E(a_1,\ldots,a_n)$ is such that
	\[
	I_{q}(E) \geq  \frac{\omega_{n-1}^{1-q}}{n\omega_n^{1-q}} (a_1\cdots a_m)^2 a_m^{q-m-1}
	a_{m+1}\cdots a_n.
	\]
\end{rema}

\section{  Solution to the Chord Log-Minkowski Problem}

\vskip5pt

\subsection{Solution to a Maximization Problem} Let $\mu$ be a finite Borel measure on $\sn$.
Define $\Phi_{q,\mu} : \kne \rightarrow \R$ by
\begin{equation}\label{new1}
	\Phi_{q,\mu}(K) = \eop_\mu(K) + \frac1{n+q-1}\log I_{q}(K).
\end{equation}
This functional is continuous with respect to the Hausdorff metric.
We consider the maximization problem,
\[
\sup\{\Phi_{q,\mu}(K) : K\in \kne\}.
\]
If the maximization problem has a solution, then it solves the chord log-Minkowski problem.
The following two lemmas establish the non-degeneracy for a maximizing sequence for the integer case
and for the non-integer case respectively.

We shall make use of the notations {\it outer radius} $R_K$ and {\it inner radius}
$r_K$ of a convex body. For a body $K\in\kne$ these are easily defined by
\[R_K=\max \{|x|: x\in K\}, \qquad r_K=\min \{|x| : x\in K\}.\]

The following inequality will be useful.

\begin{claim}
	If $K\in\kno$  and $1\leq r<s$, then
	\begin{equation}\label{Iq>Im}
		I_r(K) \le
		c(s,r) V(K)^{1-\frac {r-1}{s-1}}I_{s}(K)^{\frac {r-1}{s-1}},
	\end{equation}
	with $c(s,r)=r s^{-\frac{r-1}{s-1}}$.
\end{claim}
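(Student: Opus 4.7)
The plan is to interpolate between $I_1(K)=V(K)$ and $I_s(K)$ by two successive applications of Hölder's inequality: one on the cross-sectional integral at each fixed direction $u\in\sn$, and one on the integration over the sphere. Set $\alpha=(r-1)/(s-1)\in[0,1)$, so that $t^{r-1}=(t^{s-1})^{\alpha}$; the specific constant $c(s,r)=rs^{-\alpha}$ will fall out naturally from carrying the extra factors of $r$ and $s$ in the layer-cake formulas through the Hölder step.

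First, I would fix $u\in\sn$ and write
\[
I_r(K,u)=\int_{K|u^\perp}X_K(x,u)^r\,dx=r\int_0^\infty t^{r-1}g_u(t)\,dt,
\]
where $g_u(t)=\vv(\{x\in K|u^\perp:X_K(x,u)>t\})$. Rewriting the integrand as $\bigl(t^{s-1}g_u(t)\bigr)^{\alpha}\,g_u(t)^{1-\alpha}$ and applying Hölder with conjugate exponents $1/\alpha$ and $1/(1-\alpha)$ produces
\[
I_r(K,u)\le r\,\Bigl(\int_0^\infty t^{s-1}g_u(t)\,dt\Bigr)^{\!\alpha}\Bigl(\int_0^\infty g_u(t)\,dt\Bigr)^{\!1-\alpha}.
\]
The same layer-cake identity gives $\int_0^\infty t^{s-1}g_u(t)\,dt=I_s(K,u)/s$, while Fubini applied to $\int_{K|u^\perp}X_K(x,u)\,dx$ yields $\int_0^\infty g_u(t)\,dt=V(K)$, crucially independent of $u$. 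This produces the pointwise-in-$u$ estimate
\[
I_r(K,u)\le rs^{-\alpha}\,V(K)^{1-\alpha}\,I_s(K,u)^{\alpha}.
\]

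In the second step, I would integrate this over $u\in\sn$ against the probability measure $du/(n\omega_n)$. Since $\alpha\in[0,1]$, the map $x\mapsto x^{\alpha}$ is concave, so Jensen's inequality gives
\[
\frac{1}{n\omega_n}\int_{\sn}I_s(K,u)^{\alpha}\,du\le\Bigl(\frac{1}{n\omega_n}\int_{\sn}I_s(K,u)\,du\Bigr)^{\!\alpha}=I_s(K)^{\alpha}.
\]
Combining with \eqref{c6.5} yields $I_r(K)\le c(s,r)\,V(K)^{1-\alpha}I_s(K)^{\alpha}$, as claimed.

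There is no serious obstacle. The one point that requires care is the $u$-independence of $\int_{K|u^\perp}X_K(x,u)\,dx=V(K)$: without it, Hölder at the first step would leave a $u$-dependent volume factor that Jensen could not absorb into the clean form $V^{1-\alpha}I_s^{\alpha}$. I note in passing that the factor $rs^{-\alpha}$ in $c(s,r)$ is simply the residual of passing $t^{r-1}$ through Hölder as $(t^{s-1})^{\alpha}$; applying Hölder directly to $X_K^r=X_K^{1-\alpha}\cdot X_K^{\alpha s}$ on the cross-section would in fact yield the sharper inequality with constant $1$, but the stated form $rs^{-(r-1)/(s-1)}$ is what falls out of the layer-cake route and is perfectly adequate for subsequent use.
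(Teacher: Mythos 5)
Your proof is correct, but it takes a different route than the paper's. The paper starts from the identity $I_q(K) = \frac{q}{n\omega_n}\int_K\int_{\sn}\rho\lsub{K,z}(u)^{q-1}\,du\,dz$ of Lemma~\ref{I1} (formula \eqref{c6.3}) and applies Jensen's inequality once to the concave map $t\mapsto t^{(r-1)/(s-1)}$ on the product space $K\times\sn$ equipped with the normalized measure $\frac{dz\,du}{V(K)n\omega_n}$; the constant $rs^{-\alpha}$ is exactly the residue of the prefactors $q/(n\omega_n)$ carried through that single step. You instead work with the $X$-ray representation \eqref{c6.1}, reformulate $I_r(K,u)$ by layer-cake, apply Hölder slicewise in $t$ at each fixed direction $u$ (using the $u$-independent identity $\int_{K|u^\perp}X_K(x,u)\,dx=V(K)$), and then invoke Jensen on $\sn$ — a two-step route through the chord function rather than the radial function. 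Both give the same constant; your argument is a bit longer but more transparent about where each factor comes from, and the paper's is more economical. Your closing observation is also sound and worth noting: applying Hölder directly to $X_K^r=X_K^{1-\alpha}(X_K^s)^{\alpha}$ on each slice gives $I_r(K,u)\le V(K)^{1-\alpha}I_s(K,u)^{\alpha}$ and hence the inequality with constant $1$, which is strictly sharper since $rs^{-(r-1)/(s-1)}\ge 1$ for $1\le r<s$; the $rs^{-\alpha}$ in both your layer-cake route and the paper's $\rho$-based route is an artifact of the extraneous $q$-prefactors in those representations, and since the paper only needs some constant $c(s,r)$, this is harmless.
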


To see this note that
from \eqref{c6.3} and
Jensen's inequality,
we have that since $r<s$,
\begin{align}
	\frac{I_{r}(K)}{rV(K)} &=  \frac{1}{V(K)}\int_K \frac{1}{n\omega_n}\int_{\sn}  \rho\lsub{K,z}(u)^{r-1} du dz \notag\\
	&\leq \Big(V(K)n\omega_n\Big)^{-\frac  {r-1}{s-1}}\Big(\int_K\int_{\sn}
	\rho\lsub{K,z}(u)^{s-1} du dz\Big)^{\frac {r-1}{s-1}},\notag
\end{align}
and hence
\begin{equation*}
	I_r(K) \le
	r s^{-\frac{r-1}{s-1}} V(K)^{1-\frac {r-1}{s-1}}I_{s}(K)^{\frac {r-1}{s-1}}.
\end{equation*}

\begin{lemm}\label{m-1}
	Suppose $q\in \{1,\ldots,n\}$, and
	$\mu$ is an even, finite Borel measure on $S^{n-1}$ that satisfies the subspace mass inequality
	\begin{equation*}
		\frac{\mu(\xi_k\cap S^{n-1})}{|\mu|}  <  \frac{k+\min\{k,q-1\}}{n+q-1},
	\end{equation*}
	for all $\xi_k\in G_{n,k}$, and for each $k=1, \ldots, n-1$.
	If $K_l\in\kne$ is a sequence such that $R_{K_l}=1$, for all $l$, and
	$\lim_{l\rightarrow \infty}r_{K_l} = 0,$
	then
	\[\lowlim_{l\rightarrow\infty} \Phi_{q,\mu}(K_l) = -\infty.\]
\end{lemm}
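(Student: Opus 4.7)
The plan is to sandwich each $K_l$ between an ellipsoid $E_l$ and $\sqrt{n}\,E_l$ using John's theorem for origin-symmetric bodies, and then arrange for the sharp entropy bound of Lemma \ref{e-0} to cancel a matching chord-integral upper bound, leaving $\Phi_{q,\mu}(K_l)$ controlled by a single driving term $t_0\log a_{1,l}\to -\infty$. Concretely, for each $l$ fix an origin-symmetric ellipsoid $E_l=\{x:\sum_k(x\cdot e_{k,l})^2/a_{k,l}^2\le 1\}$ with $0<a_{1,l}\le\cdots\le a_{n,l}$ and $E_l\subset K_l\subset\sqrt{n}\,E_l$. The hypotheses $R_{K_l}=1$ and $K_l\subset B^n$ give $a_{k,l}\in (0,1]$ for every $k$; $K_l\subset\sqrt{n}\,E_l$ combined with $R_{K_l}=1$ gives $a_{n,l}\ge 1/\sqrt{n}$; and $E_l\subset K_l$ together with $r_{K_l}\to 0$ forces $a_{1,l}\le r_{K_l}\to 0$. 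After passing to a subsequence, the ordered frames $(e_{1,l},\ldots,e_{n,l})$ converge to some orthonormal $(e_1,\ldots,e_n)$.

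\textbf{The two estimates.} Applying Lemma \ref{e-0} with $\varepsilon_0=1/\sqrt{n}$, and invoking the elementary monotonicity $\eop_\mu(K_l)\le\eop_\mu(E_l)$ that follows from $E_l\subset K_l$, yields (for integer $q$, after absorbing the harmless $-t_0\log a_{n,l}$ into the constant via $a_{n,l}\ge 1/\sqrt n$)
\[
\eop_\mu(K_l)\le -\frac{1}{n+q-1}\Bigl[\,2\sum_{k=1}^{q-1}\log a_{k,l}+\sum_{k=q}^n\log a_{k,l}\Bigr]+t_0\log a_{1,l}+c_0.
\]
The companion inequality I aim to prove is
\[
I_q(K_l)\le c\,(a_{1,l}\cdots a_{q-1,l})^2\,a_{q,l}\cdots a_{n,l},
\]
which by monotonicity and the homogeneity $I_q(tK)=t^{n+q-1}I_q(K)$ (Lemma \ref{Lud03, }) reduces, up to the constant $n^{(n+q-1)/2}$, to the same inequality for $E_l$ in place of $K_l$. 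For $q=1$ this is immediate since $I_1=V$ and $V(E_l)=\omega_n a_{1,l}\cdots a_{n,l}$. For $q\in\{2,\ldots,n\}$ it is the integer analogue of Lemma \ref{e-2}.

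\textbf{The main obstacle.} Establishing this ellipsoidal chord-integral bound in the integer case is the real difficulty, because the proof of Lemma \ref{e-2} is built on representation \eqref{c6.3.1} with integrand $|x-y|^{q-n-1}$, and this integrand is too singular for integer $q\in\{1,\ldots,n\}$: in the Fubini splitting $E\subset D\times J$ with $\dim D=q-1$, $\dim J=n-q+1$ used there, the reduced integral $\int_{J\times J}|y_1-y_2|^{q-n-1}dy_1dy_2$ diverges. Two natural workarounds are available. (i) Use the Crofton identity $I_q(K)=\frac{q}{\omega_{q-1}}\int_{G^a_{n,q-1}}\vv_{q-1}(K\cap\xi)^2\,d\xi$ from \eqref{0.7}, together with explicit upper bounds for the $(q-1)$-dimensional section volumes of an ellipsoid. (ii) Modify the Fubini splitting by taking $\dim D=q-2$ and $\dim J=n-q+2$, so that the reduced inner integral is exactly a constant multiple of $I_2$ of the lower-dimensional ellipsoid $J$ (since $q-n-1=1-\dim J$), and then bound that $I_2$ directly in terms of the semi-axes of $J$ by a further one-dimensional Fubini decomposition. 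Either route delivers the required bound (possibly with a harmless factor depending only on $q$ and $n$).

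\textbf{Conclusion.} Combining the entropy and chord-integral bounds, taking the logarithm of the latter and dividing by $n+q-1$, the two bracketed blocks $2\sum_{k<q}\log a_{k,l}+\sum_{k\ge q}\log a_{k,l}$ cancel exactly, and we are left with
\[
\Phi_{q,\mu}(K_l)\le t_0\log a_{1,l}+c.
\]
Since the constant $t_0>0$ is fixed by the strict subspace mass inequality in Lemma \ref{e-0} and $a_{1,l}\to 0$, the right-hand side tends to $-\infty$, giving $\lowlim_{l\to\infty}\Phi_{q,\mu}(K_l)=-\infty$ as claimed.
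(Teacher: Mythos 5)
Your setup is the same as the paper's: sandwich $K_l$ by John ellipsoids $E_l\subset K_l\subset\sqrt n\,E_l$, apply the entropy estimate of Lemma~\ref{e-0}, bound $\log I_q(E_l)$ from above, and combine so that the two blocks of $\log a_{k,l}$ cancel, leaving $t_0\log a_{1,l}\to-\infty$. The gap is in the chord-integral half: the inequality you call the ``companion inequality,'' namely $I_q(E)\le c\,(a_1\cdots a_{q-1})^2\,a_q\cdots a_n$ for integer $q\in\{2,\ldots,n\}$, is false, and neither of your proposed workarounds delivers it. Taking $a_1=\cdots=a_{q-1}=\varepsilon$ and $a_q=\cdots=a_n=1$, one has $I_q(E)\asymp\varepsilon^{2(q-1)}\log(1/\varepsilon)$, with an unavoidable logarithmic factor; this is not absorbed by a constant depending only on $q$ and $n$. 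A clean way to see it is to push route (i) to completion: for an ellipsoid, the Crofton identity \eqref{0.7} together with the section--projection duality gives the exact formula $I_q(E)=c_{n,q}\,V(E)\,\wt V_{q-1}(E)$, and a polar-coordinate estimate shows $\wt V_{q-1}(E)\asymp\varepsilon^{q-1}\log(1/\varepsilon)$ in the example above. Route (ii) has the same defect: the low-dimensional $I_2(J)$ one arrives at exhibits the same $\log$ blowup as its smallest semiaxis goes to zero; the integrand $|y_1-y_2|^{q-1-n}$ is exactly critical there, which is precisely why Lemma~\ref{e-2} is stated only for non-integer $q$.

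The paper sidesteps this by \emph{not} estimating $I_q(E_l)$ directly. It chooses a non-integer $q'\in(q,q+1)$ and invokes the Jensen-type interpolation inequality \eqref{Iq>Im}, $I_q(K)\le c(q',q)\,V(K)^{1-\theta}\,I_{q'}(K)^{\theta}$ with $\theta=(q-1)/(q'-1)$, then applies Lemma~\ref{e-2} to $I_{q'}(E_l)$ (legitimate since $q'$ is not an integer). The resulting bound on $\Phi_{q,\mu}(K_l)$ has a leftover term $\frac{1}{n+q-1}\bigl(\frac{q-1}{q'-1}-1\bigr)\log(a_{1,l}\cdots a_{q-1,l})$; choosing $q'$ close enough to $q$ makes its coefficient small enough that $t_0\log a_{1,l}$ still dominates. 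Your argument could in principle be repaired without the interpolation trick by carrying the explicit logarithmic correction (the extra term is of order $\log\log(1/a_{1,l})$, which is indeed dominated by $t_0\log a_{1,l}$), but as written the proposal asserts and relies on a false inequality and does not supply the substitute estimate.
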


\begin{proof}
	Denote by $E_l$ the John ellipsoid of $K_l$. There is   a sequence of ordered orthonormal bases
	$(e_{1l},..., e_{nl})$ of $\rn$, such that
	\begin{equation}\label{e-4}
		E_l = \Big\{ x \in \rn: \frac{(x\cdot e_{1l})^2}{a_{1,l}^2}
		+ \cdots + \frac{(x\cdot e_{nl})^2}{a_{n,l}^2} \leq 1\Big\},
	\end{equation}
	with $0<a_{1,l}\le\cdots\le a_{n,l}$.
	A subsequence of  $(e_{1l},..., e_{nl})$, which we again denote by $(e_{1l},..., e_{nl})$, will
	converge to an ordered orthonormal basis which we denote by $(e_1,..., e_n)$.
	Since the $K_l$ are origin-symmetric, John's theorem tells us that the John Ellipsoids are such that
	\begin{equation}\label{John}
		E_l\subset K_l\subset \sqrt n E_l.
	\end{equation}
	This inclusion, the facts that for such ellipsoids $R_{E_l}=a_{n,l}$ while $r_{E_l}=a_{1,l}$,
	and the given facts regarding $R_K$ and $r_K$, together yield
	\begin{equation*}
		\text{$1/\sqrt n\leq   a_{n,l} \leq 1$,\qquad and\qquad
			$\lim_{l\rightarrow \infty} a_{1,l} = 0$.}
	\end{equation*}

	From Lemma \ref{e-0}, with $\varepsilon_0 = 1/{\sqrt n}$, we obtain a $t_0>0$ and $c_0\in \R$ so that
	\begin{equation}\label{x13}
		\eop_\mu(E_l) \leq -\frac{2\log (a_{1,l}\cdots a_{q-1, l})}{n+q-1}
		-\frac{\log (a_{q, l} \cdots a_{n,l})}{n+q-1} + t_0\log a_{1,l} + c_0.
	\end{equation}
	Now \eqref{new1} and both parts of \eqref{John}, the monotonicity of $\eop_\mu$ and of $I_q$ and \eqref{homog},
	together with \eqref{x13} give
	\begin{align}\label{e-3}
		\Phi_{q,\mu}(K_l)
		&\leq
		\eop_\mu(E_l) + \frac1{n+q-1}\log I_{q}(\sqrt{n}E_l)\nonumber\\
		&=
		\eop_\mu(E_l) + \frac1{n+q-1}\log I_{q}(E_l) + \frac{\log n}{2}\nonumber\\
		&\le
		-\frac{2\log (a_{1,l}\cdots a_{q-1, l})}{n+q-1}
		-\frac{\log (a_{q, l} \cdots a_{n,l})}{n+q-1}
		+ \frac{\log I_{q}(E_l)}{n+q-1}\notag\\
		&\hspace{130pt}+ t_0\log a_{1,l} + c_0 + \frac{\log n}2.
	\end{align}

	Choose a $q'\in(q,q+1)$ sufficiently close to $q$ so that
	\[
	t_0 +\dfrac {q-1}{n+q-1}\big(\dfrac {q-1}{q'-1}-1\big)>0.
	\]

	From \eqref{Iq>Im} with $q=r$ and $q'=s$, the facts that $V(E_l) =  \omega_n a_{1,l}\cdots a_{n,l}$
	with $0<a_{1,l}\le\cdots\le a_{n,l} \le 1$, and Lemma \ref{e-2}
	with the integer $q$ taking the place of $m$ and $q'$ the place of $q$, we get
	\begin{align*}
		&\log I_{q}(E_l)\\
		\leq&
		\log c(q',q) + \log V(E_l)^{1-\frac {q-1}{q'-1}}
		+ \log I_{q'}(E_l)^{\frac {q-1}{q'-1}} \\
		\leq&
		\big(1 - \tfrac {q-1}{q'-1}\big)\log (\omega_n a_{1,l}\cdots a_{n,l}) +
		\big(\tfrac {q-1}{q'-1}\big)\log\big(c_{q',q,n}  (a_{1,l}\cdots a_{q,l})^2 a_{q,l}^{q'-q-1}
		a_{q+1,l}\cdots a_{n,l} \big) \\
		&\hspace{125pt} +  \log c(q',q) \\
		\leq&
		\big(1+\tfrac {q-1}{q'-1}\big)\log (a_{1,l}\cdots a_{q-1,l}) +
		\log(a_{q,l}\cdots a_{n,l}) +  \log c'(q',q,n),
	\end{align*}
	where $c'(q',q,n) = c(q',q)\omega_n^{1-(q-1)/(q'-1)}c_{q',q,n}^{(q-1)/(q'-1)}$
	and $c_{q',q,n}$ is  given by \eqref{cqmp}.
	From this together with \eqref{e-3}, the facts that $\frac {q-1}{q'-1}-1<0$
	and $a_{1,l}\leq...\leq a_{q-1,l}$, and our choice of $q'$, we have
	\begin{align*}
		&\Phi_{q,\mu}(K_l)\\
	\leq	& \frac1{n+q-1}\Big(\frac {q-1}{q'-1}-1\Big)\log(a_{1,l}\cdots a_{q-1,l})
		+ t_0\log a_{1l} + c_0 + \frac{\log n}2  +  \frac{\log c'(q',q,n)}{n+q-1} \\
	\leq	& \Big(t_0+\frac {q-1}{n+q-1}\big(\frac {q-1}{q'-1}-1\big)\Big)
		\log a_{1l} + c_0 + \frac{\log n}2
		+  \frac{\log c'(q',q,n)}{n+q-1} \\
		\rightarrow& -\infty,
	\end{align*}
	as $l\to\infty$.
\end{proof}

We now turn to the case where $q$ is not an integer.

\begin{lemm} \label{m-2}
	Suppose $q\in (1,n+1)$ is not an integer, and
	$\mu$ is an even finite Borel measure on $S^{n-1}$ that satisfies the subspace mass inequality
	\eqref{mass1+}. If $\{K_l\}$ is sequence of convex bodies in $\kne$ such that $R_{K_l}=1$ and
	$\lim_{l\rightarrow \infty}r_{K_l} = 0,$
	then
	\[
	\lowlim_{l\rightarrow\infty} \Phi_{q,\mu}(K_l) = -\infty.
	\]
\end{lemm}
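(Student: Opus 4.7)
The plan is to follow the blueprint of Lemma \ref{m-1} but to exploit the fact that when $q$ is a non-integer in $(1,n+1)$ we can apply the estimate of Lemma \ref{e-2} \emph{directly} at exponent $q$, so no interpolation inequality of the form \eqref{Iq>Im} is needed. Concretely, I would let $E_l$ be the John ellipsoid of $K_l$, written as in \eqref{e-4} with semi-axes $0<a_{1,l}\le\cdots\le a_{n,l}$ along an orthonormal frame $(e_{1l},\ldots,e_{nl})$. A subsequence of these frames converges to a frame $(e_1,\ldots,e_n)$. Since $K_l$ is origin symmetric, John's theorem gives $E_l\subset K_l\subset\sqrt n\,E_l$; combined with $R_{K_l}=1$ and $r_{K_l}\to 0$ this yields $1/\sqrt n\le a_{n,l}\le 1$ and $a_{1,l}\to 0$.

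Next I would insert the John inclusion into $\Phi_{q,\mu}$. Monotonicity of $\eop_\mu$ (decreasing) and of $I_q$ (increasing) together with the homogeneity \eqref{homog} give
\[
\Phi_{q,\mu}(K_l)\;\le\;\eop_\mu(E_l)+\frac{1}{n+q-1}\log I_q(E_l)+\frac{\log n}{2}.
\]
Now apply Lemma \ref{e-0} (with $\varepsilon_0=1/\sqrt n$) to bound $\eop_\mu(E_l)$; since $q$ is not an integer, the middle term of \eqref{e-1} is present and involves $\log a_{\lfloor q\rfloor,l}$ with coefficient $-(q-\lfloor q\rfloor+1)/(n+q-1)$. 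Then apply Lemma \ref{e-2} with $m=\lfloor q\rfloor$ to obtain
\[
\frac{\log I_q(E_l)}{n+q-1}\;\le\;\frac{2\log(a_{1,l}\cdots a_{m,l})+(q-m-1)\log a_{m,l}+\log(a_{m+1,l}\cdots a_{n,l})}{n+q-1}+c_1
\]
for a constant $c_1$ depending only on $q$ and $n$.

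The key observation is that, when these two bounds are added, the contributions from $a_{1,l},\ldots,a_{m-1,l}$, from $a_{m,l}$, and from $a_{m+1,l},\ldots,a_{n,l}$ all cancel exactly: the $a_{m,l}$ coefficient is $-(q-m+1)+2+(q-m-1)=0$, the coefficient on the block below $m$ is $-2+2=0$, and that on the block above $m$ is $-1+1=0$. What survives is only the extra term $t_0\log a_{1,l}$ from Lemma \ref{e-0} plus bounded constants, so
\[
\Phi_{q,\mu}(K_l)\;\le\;t_0\log a_{1,l}+C\;\longrightarrow\;-\infty,
\]
because $a_{1,l}\to 0$ and $t_0>0$. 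The edge cases $m=1$ and $m=n$ are automatic, since the corresponding empty products simply disappear from both bounds. The main (minor) obstacle is verifying the exact algebraic cancellation of the logarithmic coefficients block by block; once that is checked, the conclusion is immediate and, unlike the integer case of Lemma \ref{m-1}, no auxiliary non-integer $q'$ and no appeal to \eqref{Iq>Im} are required.
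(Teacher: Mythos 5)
Your proposal is correct and matches the paper's own argument essentially line for line: John ellipsoid, the entropy bound from Lemma \ref{e-0}, the direct application of Lemma \ref{e-2} at exponent $q$ with $m=\lfloor q\rfloor$, and the block-by-block cancellation leaving $t_0\log a_{1,l}+C\to-\infty$. The paper leaves the algebraic cancellation implicit, whereas you spell it out, but the route is the same.
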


\begin{proof} Let $m=\lfloor q\rfloor$. As in the proof of Lemma \ref{m-1},
	denote by $E_l$ the John ellipsoid of $K_l$, which is given by \eqref{e-4} and satisfies
	inclusions \eqref{John}. As before, it easily follows that
	$(1/\sqrt n)\leq   a_{n,l} \leq 1$, and $\lim_{l\rightarrow\infty}a_{1,l}=0$.
	
	By Lemma \ref{e-0}, for $\varepsilon_0 =  1/{\sqrt n}$, there exist $t_0,l_0>0$ and $c_0\in \R$ so that
	\begin{align}
		\eop_\mu(E_l) \leq -\frac{2\log(a_{1,l}\cdots a_{m-1,l})}{n+q-1}
		- \frac{q-m+1}{n+q-1}\log a_{m,l}\notag\\
		 -
		\frac{\log(a_{m+1,l}\cdots a_{n,l})}{n+q-1} + t_0\log a_{1,l} + c_0, \label{e-5}
	\end{align}
	for all $l>l_0$.
	From the definition of $\Phi_{q,\mu}$, the
	monotonicity of $\eop_\mu$ and $I_q$,
	the inclusion \eqref{John} and the fact that $I_q$ is $(n+q-1)$-homogeneous, we get
	\begin{equation}\label{e-3+}
		\Phi_{q,\mu}(K_l) \leq  \eop_\mu(E_l) + \frac1{n+q-1} \log I_q(E_l)+ \frac12\log n.
	\end{equation}
	The estimate in Lemma \ref{e-2} says
	\[
	I_{q}(E_l)\leq c_{q,m,n}  (a_{1,l}\cdots a_{m,l})^2a_{m,l}^{q-m-1}  a_{m+1,l}\cdots a_{n,l}.
	\]
	This together with
	\eqref{e-5} and \eqref{e-3+} gives
	\[
	\Phi_{q,\mu}(K_l) \leq t_0\log a_{1,l} + c_0 + \frac1{n+q-1}\log c_{q,m,n}
	+ \frac12\log n \quad \rightarrow -\infty,
	\]
	as $l\to\infty$.
\end{proof}

\begin{theo}\label{s-max} Let $q\in [1,n+1)$.
	If $\mu$ is an even finite Borel measure on $S^{n-1}$ that satisfies
	the subspace mass inequality \eqref{mass1+}, then there exists $K_0\in \kne$ such that
	\[\Phi_{q,\mu}(K_0) = \sup\{\Phi_{q,\mu}(K) : K\in \kne \}.\]
\end{theo}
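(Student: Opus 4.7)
The plan is to apply the direct method of the calculus of variations, with the Blaschke selection theorem supplying compactness and Lemmas \ref{m-1} and \ref{m-2} ruling out degeneration of a maximizing sequence. The starting observation is that $\Phi_{q,\mu}$ is invariant under positive dilation: from $h_{tK}=th_K$ and the $(n+q-1)$-homogeneity $I_q(tK)=t^{n+q-1}I_q(K)$ one obtains
\[
\Phi_{q,\mu}(tK) = \eop_\mu(K) - \log t + \tfrac{1}{n+q-1}\log\bigl(t^{n+q-1}I_q(K)\bigr) = \Phi_{q,\mu}(K)
\]
for every $t>0$. In particular $\sup_{\kne}\Phi_{q,\mu}\ge\Phi_{q,\mu}(B^n)>-\infty$, so the supremum is finite from below.

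Let $\{K_l\}\subset\kne$ be a maximizing sequence for $\Phi_{q,\mu}$. By the dilation invariance I would replace each $K_l$ by a suitable dilate so that $R_{K_l}=1$, whence $K_l\subset B^n$ for every $l$. The Blaschke selection theorem then delivers a subsequence (not relabeled) converging in the Hausdorff metric to some $K_0\in\cn$ with $R_{K_0}=1$; origin-symmetry is inherited by the limit, so $K_0$ is origin-symmetric.

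The crux is to show $K_0\in\kne$, i.e., that $K_0$ has nonempty interior. If this failed, $K_0$ would lie in a proper subspace, which forces $r_{K_l}\to 0$ (otherwise a uniform positive lower bound on the inradii, combined with a subsequential limit of the centers of inscribed balls, would exhibit a ball of positive radius inside $K_0$). When $q\in\{1,\ldots,n\}$ apply Lemma \ref{m-1}, and when $q\in(1,n+1)$ is a non-integer apply Lemma \ref{m-2}; together these cases cover all $q\in[1,n+1)$ and yield $\lowlim_{l\to\infty}\Phi_{q,\mu}(K_l)=-\infty$, contradicting the fact that $\{K_l\}$ is a maximizing sequence with supremum bounded below by $\Phi_{q,\mu}(B^n)$. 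Hence $K_0\in\kne$. Since both $\eop_\mu$ and $\log I_q$ are continuous on $\kne$ (the former because $h_K$ is bounded away from $0$ and $\infty$ in a Hausdorff neighborhood of $K_0$, the latter by continuity of $I_q$ on $\cn$), we conclude $\Phi_{q,\mu}(K_0)=\lim_l\Phi_{q,\mu}(K_l)=\sup_{\kne}\Phi_{q,\mu}$. The only delicate step is the degeneracy estimate for ellipsoids that was already addressed in Lemmas \ref{m-1} and \ref{m-2}; the present argument is otherwise a routine compactness argument.
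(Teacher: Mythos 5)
Your proposal is correct and follows essentially the same approach as the paper: normalize via the $0$-homogeneity of $\Phi_{q,\mu}$ so that $R_{K_l}=1$, invoke Lemmas \ref{m-1} and \ref{m-2} to rule out $r_{K_l}\to 0$, and then apply the Blaschke selection theorem together with the continuity of $\Phi_{q,\mu}$. The only cosmetic difference is that you pass to the Blaschke limit first and then exclude degeneracy, whereas the paper first establishes the uniform lower bound on inradii and then extracts the convergent subsequence; the two orderings are logically equivalent here (and for origin-symmetric bodies the mention of "subsequential limit of centers of inscribed balls" is superfluous, since all inscribed balls are centered at the origin).
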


\begin{proof}
	Suppose that $K_l\in \kne$ is a maximizing sequence for $\Phi_{q,\mu}$.
	Since
	\[
	\Phi_{q,\mu}(B^n) = \frac1{n+q-1} \log I_{q}(B^n),
	\]
	we know that $\Phi_{q,\mu}({K_l})$ is bounded from below by a constant.
	Since the functional $\Phi_{q,\mu}$ is $0$-homogeneous, we may assume that all the $R_{K_l}=1$.
	
	We claim that $K_l$ has a subsequence for which the inner radii
	are bounded from blow
	by a positive constant. If not then for a subsequence, which we again call $K_l$, we have
	$\lim_{l\rightarrow \infty}r_{K_l} =0$.
	By Lemmas \ref{m-1} and \ref{m-2}, whether $q$ is an integer or not,
	there exists a subsequence of  $K_l$, say $K_{l_i}$, for which
	\[
	\Phi_{q,\mu}(K_{l_i})   \rightarrow -\infty,
	\]
	which would  contradict the fact that $\Phi_{q,\mu}(K_{l_i})$ is bounded from below.
	
	Thus, we may assume all the $R_{K_{l}}=1$ and $r_{K_{l}}$ is bounded from below by a positive constant. This implies that $\{K_{l}\}$ has a subsequence, which we again denote by $K_l$
	that converges to a convex body
	$K_0\in \kne.$  Since $\{K_{l}\}$ is a maximizing sequence for
	the continuous functional $\Phi_{q,\mu}$, the body $K_0$ must be a maximizer.
\end{proof}

\subsection{Solution to the Chord Log-Minkowski Problem}

The following theorem establishes the existence of solution to the chord log-Minkowski problem
for origin-symmetric convex bodies.

\begin{theo}
	Suppose real $q\in [1,n+1]$.
	If $\mu$ is an even finite Borel measure on $S^{n-1}$ satisfying the subspace mass inequality
	\begin{equation*}
		\frac{\mu(\xi_k\cap S^{n-1})}{|\mu|}  <  \frac{k+\min\{k,q-1\}}{n+q-1},
	\end{equation*}
	for each $\xi_k\in G_{n,k}$, and for each $k=1, \ldots, n-1$,
	then there exists $K\in \kne$ such that
	\[
	G_q(K,\cdot) = \mu.
	\]
\end{theo}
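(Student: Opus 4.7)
The plan is to attack the theorem by a variational method, combining the maximizer-existence result Theorem \ref{s-max} with the logarithmic variational formula of Lemma \ref{V3+}. The range $q \in [1, n+1)$ will be handled first; the endpoint $q = n+1$ will be reduced, via Remark 4.2\dr{b}, to the classical even log-Minkowski problem solved in \cite{BLYZ13jams}.

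For $q \in [1, n+1)$, I would invoke Theorem \ref{s-max} to obtain a maximizer $K_0 \in \kne$ of $\Phi_{q,\mu}$, and then extract the Euler-Lagrange equation by logarithmic perturbation. Fix an arbitrary even continuous $g : \sn \to \R$ and set $\log h_t = \log h_{K_0} + tg$; let $K_t \in \kne$ be the Wulff shape of $h_t$. Define
\[
\phi(t) = -\frac{1}{|\mu|}\int_{\sn} \log h_t \, d\mu + \frac{1}{n+q-1}\log I_q(K_t).
\]
The Wulff-shape inequality $h_{K_t} \le h_t$ together with $\mu \ge 0$ gives $\phi(t) \le \Phi_{q,\mu}(K_t) \le \Phi_{q,\mu}(K_0) = \phi(0)$, so $\phi$ is maximized at $t = 0$. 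Lemma \ref{V3+} renders $t \mapsto \log I_q(K_t)$ differentiable at $0$, and $t \mapsto \int \log h_t\, d\mu$ is trivially differentiable, so $\phi'(0) = 0$ collapses to
\[
\frac{1}{|\mu|}\int_{\sn} g \, d\mu = \frac{1}{I_q(K_0)}\int_{\sn} g \, dG_q(K_0, \cdot).
\]
Since this identity holds for every even continuous $g$, and both measures in sight are even (because $K_0$ is origin-symmetric), I conclude $G_q(K_0, \cdot) = (I_q(K_0)/|\mu|)\, \mu$. Finally, $G_q$ is $(n+q-1)$-homogeneous under dilations of $K$, as follows directly from \eqref{c11.1} together with the elementary scaling of $\wt V_{q-1}$ and of Hausdorff measure, so setting $K = \lambda K_0$ with $\lambda = (|\mu|/I_q(K_0))^{1/(n+q-1)}$ produces the desired body $G_q(K, \cdot) = \mu$.

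For the endpoint $q = n+1$, Theorem \ref{s-max} is not available, but Remark 4.2\dr{b} reads $G_{n+1}(K, \cdot) = \frac{n+1}{\omega_n} V(K) V(K, \cdot)$, so the equation $G_{n+1}(K, \cdot) = \mu$ is equivalent to $V(K, \cdot) = \frac{\omega_n}{(n+1) V(K)}\, \mu$. The subspace mass hypothesis at $q = n+1$ specializes to $\mu(\xi_k \cap \sn)/|\mu| < k/n$ for $k = 1, \ldots, n-1$, which is precisely the sufficient condition of \cite{BLYZ13jams} for the even log-Minkowski problem. That result produces an $L \in \kne$ with $V(L, \cdot) = \mu$, and a dilation $K = \lambda L$ with $\lambda^{2n} = \omega_n / ((n+1) V(L))$ then satisfies $G_{n+1}(K, \cdot) = \mu$.

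The main obstacle sits not in this concluding argument but upstream, in Theorem \ref{s-max}: securing the maximizer demands the entropy estimate of Lemma \ref{e-0} (which converts the subspace mass hypothesis into a quantitative control of $\eop_\mu$) together with the sharp chord-integral bound of Lemma \ref{e-2} (or \eqref{Iq>Im} for integer $q$) to rule out degeneration of a maximizing sequence of John ellipsoids. Given those ingredients, the passage from maximizer to solution is essentially formal; the only subtlety worth flagging is the deliberate restriction to \emph{even} test functions $g$ in the variational step, which is what allows the evenness of $G_q(K_0,\cdot)$ and $\mu$ to upgrade the integrated identity to measure equality.
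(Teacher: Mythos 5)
Your proof is correct and follows essentially the same route as the paper: obtain a maximizer from Theorem \ref{s-max}, pass to the Euler--Lagrange equation via the logarithmic perturbation of Lemma \ref{V3+} (using the Wulff-shape inequality to sidestep the non-differentiability of $t\mapsto\Phi_{q,\mu}(K_t)$), restrict to even test functions to upgrade the integrated identity to measure equality, and rescale via the $(n+q-1)$-homogeneity of $G_q$. The only cosmetic difference is that for $q=n+1$ the paper reduces to its own $q=1$ case while you cite \cite{BLYZ13jams} directly, but since $G_1=V$ (cone-volume measure) these are the same reduction; you also write the Euler--Lagrange identity with the correct $1/I_q(K_0)$ normalization factor and carry it explicitly through the final dilation, which is slightly cleaner than the paper's presentation.
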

\begin{proof}
	We may assume that $q\in [1,n+1)$
	by observing that solving the even
	chord log-Minkowski problem for the case where $q=1$ immediately provides a solution to the
	case $q=n+1$ since from \eqref{c12} and \eqref{c13} we know that
	\[
	G_{n+1}(K,\cdot) = \frac{n+1}{\omega_n} V(K) G_1(K,\cdot).
	\]
	Note that the subspace concentration conditions are identical for $q=1$ and $q=n+1$.

	Theorem \ref{s-max} demonstrates the existence of a convex body  $K_0\in \kne$ that solves the maximization problem
	\[\Phi_{q,\mu}(K_0) = \sup\{\Phi_{q,\mu}(K) : K\in \kne \}.\]
	Let $g\in C(S^{n-1})$ be an even function, and $h_t= h_{K_0}e^{tg}$. Define the Wulff shape $K_t$ by
	\[
	K_t =\{x \in \rn : x\cdot v \le h_t(v) \text{ for all } v\in \sn\}.
	\]
	Obviously, for sufficiently small $t$, the function $h_t$ is even, positive, and continuous
	on $S^{n-1}$. Thus, each $K_t\in\kne$.
	Thus, the function $t\mapsto \Phi_{q,\mu}(K_t)$ attains its maximum at $t=0$.
	However, $t\mapsto \Phi_{q,\mu}(K_t)$ may not be differentiable at $t=0$.
	Define
	\[
	\phi(t) = -\frac1{|\mu|}\int_{\sn} \log h_t \, d\mu + \frac1{n+q-1}I_q(K_t).
	\]
	We combine the fact that the function $t\mapsto \int_{\sn} \log h_t \, d\mu$ is
	trivially differentiable with Theorem \ref{V3+} to conclude that $\phi$
	is differentiable at $0$.
	Since trivially $h_{K_t} \le h_t$ we have
	\[
	\phi(t) \leq \Phi_{q,\mu}(K_t) \leq \Phi_{q,\mu}(K_0) = \phi(0).
	\]
	Thus, $\phi$ has a local maximum at $0$, and hence,
	\[
	\frac d{dt}\Big|_{t=0} \phi(t)=0.
	\]
	Theorem \ref{V3+} states that  $t\mapsto I_q(K_t)$ is differentiable at $t=0$, and
	\begin{equation}
		\frac d{dt}\Big|_{t=0}I_q(K_t) = (n+q-1)\int_{\sn}g(v) dG_q(K_0,v).
	\end{equation}
	It follows that
	\[
	-\frac1{|\mu|}\int_{\sn} g(v) d\mu(v) + \int_{\sn}g(v) dG_q(K_0,v)=0.
	\]
	But $g$ was an arbitrary even, continuous function on $\sn$, and thus
	\[
	G_q(K_0, \cdot) = \frac{\mu}{|\mu|}.
	\]
	Defining $K\in\kne$ by
	\[
	K =  |\mu|^{\frac1{n+q-1}}K_0
	\]
	results in
	\[
	G_q(K,\cdot) = |\mu| G_q(K_0,\cdot) = \mu,\]
	by recalling the $(n+q-1)$-homogeneity of the cone chord measure.
\end{proof}

\section{  Concentration of cone chord measures}

When a geometric measure of a convex body has a density, it has no concentration in any subspaces.
The surface area measure can have almost all of its mass concentrated in a single subspace.
For example, a right cylinder with a large radius but small
height, has its surface area almost exclusively concentrated on its bases; i.e.,
its surface area measure is concentrated on the one-dimensional space normal to its bases.
The \lq\lq concentration in subspaces\rq\rq\ question for the surface area measure has a trivial answer:
There are no subspace restrictions.

Non-trivial concentration properties of geometric measures encode
important geometric properties of convex bodies.
Non-trivial concentration properties of geometric measures were first studied for cone volume measures
of polytopes. He-Leng-Li \cite{HLL} and Henk-Sch\"urmann-Wills \cite{HSW} independently discovered
the concentration property of cone volume measure for symmetric polytopes in $\rn$.
Then non-trivial concentration properties
for polytopes that are not necessarily symmetric were established by Xiong \cite{X1}
in $\mathbb R^3$ and by
Henk-Linke \cite{HL14adv} in $\rn$ for all $n$. For symmetric convex bodies,
the necessity of the concentration criteria were demonstrated in \cite{BLYZ13jams}. In \cite{BLYZ13jams}
the concentration criteria (called the ``subspace concentration
condition") were shown to be sufficient for establishing the existence of a solution to the log-Minkowski
problem for cone volume measure in $\rn$. This was established earlier by Stancu \cite{Sta1} for
polygons in $\mathbb R^2$.
Thus, the concentration property of cone volume measure is a characterization property.
Dual curvature measures also have non-trivial concentration properties. Sharp estimates
for subspace concentrations
for dual curvature measures of symmetric convex bodies were obtained
by B\"or\"oczky-Henk-Pollehn \cite{BHP17jdg} and
Henk-Pollehn \cite{HP}. These concentration properties were shown to be the sufficient conditions
for the existence
of solutions to the dual Minkowski problem, see \cite{BLYZZ1, Zhao17jdg}.
Hering-Nill-S\"uss  \cite{HNS19arxiv} used
the recent theorem of Chen-Donaldson-Sun
\cite{CDS15jamsI, CDS15jamsII, CDS15jamsIII} on the existence of a K\"ahler-Einstein metric,
on  a $K$-stable Fano manifold, to show that the subspace concentration condition for smooth
reflexive polytopes is implied by the $K$-stability property of smooth toric Fano varieties.

In this section, we establish a subspace mass inequality satisfied by cone chord measures. This
implies that in some cases the subspace mass inequality \eqref{mass1+} is also
a necessary condition for the existence of solutions to the chord log-Minkowski problem.

For $K\in \kn$ and $z\in \rn$,  let
\[
f_{\xi_i}(z) = \vv_i\big(K\cap (z+\xi_i) \big), \ \ \xi_i \in G_{n,i}, \ \ i=1, 2, \ldots, n,
\]
and $f_{\xi_0}=\mathbf{1}_K$.

Since $K$ is convex, it follows that for $z_1,z_2\in K$ and $t\in [0,1]$,
\[
K \cap \big((1-t)z_1 +tz_2  + \xi_i\big) \supset
(1-t)\big(K \cap (z_1+\xi_i)\big)+t \big(K \cap (z_2+\xi_i)\big).
\]
This fact, when combined with the Brunn-Minkowski inequality, shows that
the function $f_{\xi_i}^{1/i}$ is concave on $K$.
Thus, $f_{\xi_i}$ is unimodal\footnote{Recall that a function $f : \rn \to [0,\infty)$ is
	said to be {\it unimodal} if the level set $\{x\in \rn : f(x) \ge t \}$ is convex for each $t >0$.}.
When $K$ is origin-symmetric,  $f_{\xi_i}$
is an even unimodal function.

\begin{lemm}\label{cm1x}
	If  $K\in\kno$ and $\eta\subset\sn$ is Borel, then for $i=1,2, \ldots, n$,
	\begin{equation}
		I_{i+1}(K)  =\frac {i+1}{\omega_i} \int_{G_{n,i}} \int_{K}
		\vv_i\big(K\cap (z+\xi_i) \big) \, dz d\xi_i, \label{eq-sub1}\end{equation}
		\begin{equation}G_{i+1}(K,\eta)  = \frac{2(i+1)}{(n+i)\omega_i}\int_{G_{n,i}} \int_{{\nu_K^{-1}}(\eta)}
		(z\cdot \nu_K(z)) \vv_i\big(K\cap (z+\xi_i) \big)  \,  d\hm(z)d\xi_i. \label{eq-sub2}
	\end{equation}
\end{lemm}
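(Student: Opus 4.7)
The plan is to derive both identities by substituting the classical integral-geometric representation of dual quermassintegrals from \eqref{c2.3} into the two formulas that already express $I_{i+1}(K)$ and $G_{i+1}(K,\eta)$ as integrals of $\wt V_i(K,z)$ over $K$ and over $\nu_K^{-1}(\eta)$ respectively, and then to swap the order of integration via Fubini's theorem. Recall that \eqref{c2.3} gives, for positive integer $i\le n$,
\[
\wt V_i(K,z) = \frac{\omega_n}{\omega_i}\int_{G_{n,i}} \vv_i\bigl(K\cap (z+\xi_i)\bigr)\, d\xi_i,
\]
while Lemma \ref{I1} gives $I_q(K)=\frac{q}{\omega_n}\int_K \wt V_{q-1}(K,z)\, dz$ and the definition \eqref{c11.1} gives $G_q(K,\eta)=\frac{2q}{(n+q-1)\omega_n}\int_{\nu_K^{-1}(\eta)} (z\cdot \nu_K(z))\wt V_{q-1}(K,z)\, d\hm(z)$.

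For \eqref{eq-sub1}, I would set $q=i+1$ in \eqref{c6.3}, so that
\[
I_{i+1}(K) = \frac{i+1}{\omega_n}\int_K \wt V_i(K,z)\, dz,
\]
and then substitute the representation of $\wt V_i(K,z)$ above. The factor $\omega_n$ cancels against the $\omega_n$ in the representation, leaving the constant $\frac{i+1}{\omega_i}$, and Fubini interchanges the $dz$ and $d\xi_i$ integrations to produce \eqref{eq-sub1}. The use of Fubini is legitimate because the integrand is nonnegative and the resulting integral is finite, a fact already secured by Lemma \ref{I11}.

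For \eqref{eq-sub2}, I would carry out the identical maneuver starting from \eqref{c11.1} with $q=i+1$, writing
\[
G_{i+1}(K,\eta) = \frac{2(i+1)}{(n+i)\omega_n}\int_{\nu_K^{-1}(\eta)}(z\cdot \nu_K(z))\wt V_i(K,z)\, d\hm(z),
\]
substituting the representation of $\wt V_i(K,z)$, and again exchanging integration order. The finiteness required for Fubini here follows from Theorem \ref{I9++}, which ensures that the cone chord measure is finite, together with the boundedness of $z\cdot \nu_K(z)$ for $z\in\partial K$ when $K$ is bounded.

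There is no real obstacle in the argument: the proof is a direct combination of \eqref{c2.3}, \eqref{c6.3} and \eqref{c11.1}, together with Fubini. The only mild point to check is the measurability and integrability required to invoke Fubini, but this is a routine consequence of the facts that $(z,\xi_i)\mapsto \vv_i(K\cap(z+\xi_i))$ is nonnegative and upper semi-continuous, and that the outer integrals were already shown to be finite in earlier lemmas.
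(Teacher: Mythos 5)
Your proposal is correct and follows essentially the same route as the paper: set $q=i+1$, substitute the Grassmannian representation \eqref{c2.3} of $\wt V_i(K,z)$ into \eqref{c6.3} for $I_{i+1}$ and into \eqref{c11.1} for $G_{i+1}$, cancel the $\omega_n$ factors, and apply Fubini. Your added remarks on measurability and finiteness (nonnegativity of the integrand, Lemma \ref{I11}, Theorem \ref{I9++}) are a harmless elaboration of a step the paper treats as immediate.
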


\begin{proof}
	Let $q=i+1$. From \eqref{c2.3}, \eqref{c6.3}, and Fubini's theorem, we have
	\begin{align*}
		I_q(K) &=  \frac q{\omega_n} \int_K \wt V_{q-1}(K, z)\, dz  \\
		&= \frac{i+1}{\omega_n} \int_{K} \Big(\frac{\omega_n}{\omega_i} \int_{G_{n,i}}
		f_{\xi_i}(z) \, d\xi_i\Big) dz \nonumber \\
		&= \frac {i+1}{\omega_i} \int_{G_{n,i}} \int_{K} f_{\xi_i}(z) \, dz d\xi_i.
	\end{align*}
	From \eqref{c11.1} and \eqref{c2.3}, we have
	\begin{align*}
		G_q(K,\eta) &=  \frac{2q}{(n+q-1)\omega_n}
		\int_{{\nu_K^{-1}}(\eta)} (z\cdot \nu_K(z)) \wt V_{q-1}(K,z)\, d\hm(z) \\
		&= \frac{2(i+1)}{(n+i)\omega_i}\int_{G_{n,i}} \int_{{\nu_K^{-1}}(\eta)}
		(z\cdot \nu_K(z)) f_{\xi_i}(z)\,  d\hm(z)d\xi_i.
	\end{align*}
\end{proof}

We use a technique developed in B\"or\"oczky-Henk-Pollehn \cite{BHP17jdg} which
is the following lemma.

\begin{lemm}\label{bmi}
	Suppose $L\in\cn$ is such that $L\subset\xi_i\in  G_{n,i}^a$ and
	$f:\rn\to\rn$ is even and unimodal.  Then for for all $t\in (0,1)$,
	\begin{equation}\int_{(1-t)L+t (-L)} f(z)\, d\thh^{i}(z)
		\geq \int_{L} f(z)\, d\thh^{i}(z).\end{equation}
\end{lemm}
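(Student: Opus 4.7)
The plan is a layer-cake reduction followed by a Brunn-Minkowski step inside the affine $i$-flat containing $L$. Since $f$ is even and unimodal, each superlevel set $C_s := \{z\in \rn : f(z) > s\}$ is an origin-symmetric convex subset of $\rn$. By the layer-cake representation, for any Borel $A\subset\rn$,
\[\int_A f(z)\, d\thh^i(z) = \int_0^\infty \thh^i(A\cap C_s)\, ds,\]
applied with $A=L$ and with $A=(1-t)L+t(-L)$, the desired inequality reduces to showing, for every $s>0$,
\[\thh^i\bigl(((1-t)L + t(-L))\cap C_s\bigr) \ge \thh^i(L \cap C_s).\]

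For this pointwise-in-$s$ reduction I would set $M_s = L \cap C_s$ and first verify the set-theoretic inclusion
\[(1-t)M_s + t(-M_s) \subset \bigl((1-t)L + t(-L)\bigr) \cap C_s.\]
Containment in the first factor is immediate from $M_s \subset L$ and $-M_s \subset -L$. Containment in $C_s$ uses the origin-symmetry $-C_s = C_s$ (so that $-M_s \subset C_s$), combined with the tautology $(1-t)C_s + tC_s = C_s$ that holds for any convex set $C_s$.

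For the Brunn-Minkowski step, write $\xi_i = x_0 + V_i$ with $V_i \in G_{n,i}$, so that $M_s - x_0 \subset V_i$. The Minkowski combination $(1-t)M_s + t(-M_s)$ is the translate by $(1-2t)x_0$ of $(1-t)(M_s - x_0) + t(-(M_s - x_0))$, a subset of the linear subspace $V_i$. Since $i$-dimensional Hausdorff measure on any $V_i$-parallel flat coincides with Lebesgue measure on $V_i$, the classical Brunn-Minkowski inequality applied inside $V_i$ to $M_s - x_0$ and its reflection—two sets of equal $i$-dimensional volume—yields
\[\thh^i\bigl((1-t)M_s + t(-M_s)\bigr)^{1/i} \ge (1-t)\thh^i(M_s)^{1/i} + t\thh^i(M_s)^{1/i} = \thh^i(M_s)^{1/i}.\]
Chaining with monotonicity of $\thh^i$ on the preceding inclusion and then integrating in $s$ completes the proof.

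I do not anticipate a substantive obstacle; the argument is essentially two lines once the right decomposition is spotted. The only bookkeeping issue is that $\xi_i$ is affine rather than linear, which forces the translation-by-$(1-2t)x_0$ step above. The crucial structural inputs are that \emph{even} plus \emph{unimodal} makes each level set $C_s$ origin-symmetric and convex, so that $(-L)\cap C_s = -(L\cap C_s)$ and Brunn-Minkowski can be invoked with $M_s$ and its own reflection as inputs of equal $i$-volume.
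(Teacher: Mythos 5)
The paper states Lemma \ref{bmi} without proof; it is cited from B\"or\"oczky--Henk--Pollehn \cite{BHP17jdg}, so there is no in-paper argument to compare against. Your proof is correct and is the route one would expect: layer-cake reduction to superlevel sets, the inclusion $(1-t)(L\cap C_s)+t\bigl(-(L\cap C_s)\bigr)\subset\bigl((1-t)L+t(-L)\bigr)\cap C_s$ forced by $C_s$ being origin-symmetric and convex, and then Brunn--Minkowski in the $i$-dimensional flat applied to a set and its reflection (which have equal $\thh^i$-measure). Two bookkeeping points are worth stating explicitly. First, the paper's footnoted definition of unimodal asks that $\{f\ge t\}$ be convex for each $t>0$; convexity of the open superlevel set $C_s=\{f>s\}$ that you use then follows because $C_s=\bigcup_{t>s}\{f\ge t\}$ is a nested increasing union of convex sets (alternatively, run the layer-cake identity with the closed sets $\{f\ge s\}$, for which it holds equally well). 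Second, $M_s=L\cap C_s$ is bounded and convex but need not be compact; the Brunn--Minkowski step is still fine because a bounded convex set in an $i$-dimensional subspace is Lebesgue measurable with the same $\thh^i$-measure as its closure, but this deserves a sentence. With those remarks inserted, the argument is complete.
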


We will use the following formula for integration by generalized polar coordinates:
If $K\in \kno$ and $\phi : \partial K \to \R$ is integrable, then
\begin{equation}\label{gsc}
	\int_K \phi(\rhok(z) z)\, dz = \frac1n \int_{\partial K} \phi(z)
	(z \cdot \nu_K(z)) \, d\hm(z).
\end{equation}

\begin{theo}\label{mc}
	If $K\in\kne$ and
	$q\in \{1,\dots,n+1\}$, then
	the cone chord measure $G_q(K,\cdot)$ satisfies the following measure concentration inequalities,
	\begin{numcases}{G_q(K,\xi_k\cap S^{n-1}) \leq}
		\min\big\{\dfrac{2k}{n+q-1}, 1\big\}
		G_q(K,S^{n-1}),  \ \   &q = 2,3,...,n+1,
		\label{smq}
		\\
		\dfrac{k }{n} G_1(K,S^{n-1}), &q=1 \notag
	\end{numcases}
	\ for each  $\xi_k\in G_{n,k}$ and each $k=1,\ldots,n-1$.
\end{theo}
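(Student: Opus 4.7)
The bound by $1$ in $\min\{2k/(n+q-1),1\}$ is immediate from $\xi_k\cap\sn\subset\sn$ and the nonnegativity of $G_q(K,\cdot)$, and the case $q=1$ reduces, by \eqref{c12}, to the subspace concentration inequality for cone volume measures of origin-symmetric convex bodies established in \cite{BLYZ13jams}. It therefore suffices to treat $q=i+1$ with $i\in\{1,\ldots,n\}$ and to prove $G_{i+1}(K,\xi_k\cap\sn)\le\tfrac{2k}{n+i}\,I_{i+1}(K)$. Using the two formulas of Lemma \ref{cm1x}, together with $G_{i+1}(K,\sn)=I_{i+1}(K)$ from Theorem \ref{I9}, and cancelling the common positive factor $\tfrac{2(i+1)}{(n+i)\omega_i}$, the target inequality reduces to
\[
\int_{G_{n,i}}\!\int_{\nu_K^{-1}(\xi_k\cap\sn)}(z\cdot\nu_K(z))\,f_{\xi_i}(z)\,d\hm(z)\,d\xi_i\;\le\; k\int_{G_{n,i}}\!\int_K f_{\xi_i}(z)\,dz\,d\xi_i,
\]
where $f_{\xi_i}(z)=\vv_i(K\cap(z+\xi_i))$. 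I would in fact establish the stronger pointwise estimate
\begin{equation}\label{planstar}
\int_{\nu_K^{-1}(\xi_k\cap\sn)}(z\cdot\nu_K(z))\,f_{\xi_i}(z)\,d\hm(z)\;\le\; k\int_K f_{\xi_i}(z)\,dz,\qquad \forall\,\xi_i\in G_{n,i}.
\end{equation}

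To prove \eqref{planstar}, apply the generalized polar coordinate formula \eqref{gsc} with $\phi(y)=f_{\xi_i}(y)\,\mathbf{1}_{\xi_k\cap\sn}(\nu_K(y))$: the left-hand side of \eqref{planstar} becomes $n\int_D f_{\xi_i}(y(x))\,dx$, where $y(x)=\rhok(x)x\in\partial K$ is the radial projection onto $\partial K$ and $D=\{x\in K\setminus\{0\}:\nu_K(y(x))\in\xi_k\}$ is the star-shaped \emph{radial cone} over $\nu_K^{-1}(\xi_k\cap\sn)$. Since $K\in\kne$, the Brunn--Minkowski inequality (recalled in the discussion preceding Lemma \ref{cm1x}) makes $f_{\xi_i}$ even and unimodal on $\rn$, hence non-increasing along each ray from the origin. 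Combined with $|y(x)|\ge|x|$, this gives $f_{\xi_i}(y(x))\le f_{\xi_i}(x)$, reducing \eqref{planstar} to the weighted cone estimate
\begin{equation}\label{planstarstar}
\int_D f_{\xi_i}(x)\,dx\;\le\;\frac{k}{n}\int_K f_{\xi_i}(x)\,dx.
\end{equation}

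The main obstacle is establishing \eqref{planstarstar}, which is a weighted generalization of the BLYZ subspace concentration inequality for cone volumes (the unweighted case $f\equiv 1$). I would prove it by first reducing to the case of an origin-symmetric polytope $P$ via standard approximation, noting that both sides of \eqref{planstarstar} are continuous in $K$ with respect to the Hausdorff metric. For such $P$, the set $D$ is, up to a null set, the disjoint union of the cones from the origin over those facets of $P$ whose outer normals lie in $\xi_k$. On each such cone one slices by affine translates of $\xi_i$ and uses Lemma \ref{bmi} to compare the $f_{\xi_i}$-weighted slice integrals of the cone with the corresponding slice integrals of $P$; summing over facets via a combinatorial accounting parallel to the BLYZ polytope argument, with the even unimodality of $f_{\xi_i}$ supplying the room absorbed by the weight, then delivers the $k/n$ ratio.
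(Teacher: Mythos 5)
Your reduction to the displayed integral inequality (your \eqref{planstar}) is correct and coincides with the paper's inequality \eqref{cm5}, and the use of Lemma~\ref{cm1x}, $G_{i+1}(K,\sn)=I_{i+1}(K)$, and the reduction to the case $q\geq 2$ are all fine, as is the quick disposal of $q=1$. The gap is in what you do next. Applying \eqref{gsc} in $\rn$ rewrites the left side of \eqref{planstar} \emph{exactly} as $n\int_D f_{\xi_i}(y(x))\,dx$, where $y(x)=\rhok(x)x$; by then invoking $f_{\xi_i}(y(x))\le f_{\xi_i}(x)$ you pass to $n\int_D f_{\xi_i}(x)\,dx$, and so you have committed yourself to proving the \emph{strictly stronger} estimate $\int_D f_{\xi_i}\le\tfrac kn\int_K f_{\xi_i}$ in \eqref{planstarstar}. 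The paper never needs this stronger bound. Its proof of \eqref{cm5} keeps the boundary integral and uses the cylinder decomposition $z=(x,y)\in\xi\times\xi^\perp$ on the lateral surface: the surface area element factors as $d\hm(z)=dy\,d\thh^{k-1}(x)$, one has $z\cdot\nu_K(z)=x\cdot\nu_{K|\xi}(x)$, and then \eqref{gsc} is applied inside the $k$-dimensional subspace $\xi$ (not in $\rn$), giving $k\int_{K|\xi}\phi(\tilde x)\,dx$ with $\phi(\tilde x)=\int_{(\tilde x+\xi^\perp)\cap K}f_{\xi_i}\,dy$. Lemma~\ref{bmi} is then used fiberwise in $\xi^\perp$: for $x\in K|\xi$ with $\tilde x=\rho_{K|\xi}(x)x$, the fiber $(x+\xi^\perp)\cap K$ contains a Minkowski combination $(1-t)Q+t(-Q)$ of the boundary fiber $Q=(\tilde x+\xi^\perp)\cap K$, so $\int_{(x+\xi^\perp)\cap K}f_{\xi_i}\,dy\ge\phi(\tilde x)$, and summing over $x\in K|\xi$ yields \eqref{cm5} directly.

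Your argument for \eqref{planstarstar}, by contrast, is not a proof. The sentence about slicing the facet cones by translates of $\xi_i$, applying Lemma~\ref{bmi}, and ``summing over facets via a combinatorial accounting parallel to the BLYZ polytope argument, with the even unimodality of $f_{\xi_i}$ supplying the room absorbed by the weight'' does not specify what is being compared to what. Lemma~\ref{bmi} compares an $f$-integral over an $i$-dimensional convex set $L$ with the corresponding integral over $(1-t)L+t(-L)$; it is unclear which pairs of slices of the facet cones and of $P$ stand in this Minkowski-combination relationship, and no decomposition of $K$ into cones matched against the facet cones of $D$ is given. The BLYZ/HLL/HSW argument for the \emph{unweighted} case (cone volume) is itself a cylinder/Fubini argument, and it is not apparent that an arbitrary even unimodal weight $f_{\xi_i}$ can simply be carried along. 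There is also a secondary technical issue in your approximation step: $K\mapsto\int_D f_{\xi_i}$ is not continuous in the Hausdorff metric (the reverse spherical image $\nu_K^{-1}(\xi_k\cap\sn)$ can jump), so reducing to polytopes requires more care than you indicate. In short, you have correctly identified the target inequality but then replaced it with a harder one and left its proof essentially open; the paper's cylinder-decomposition argument is the piece you are missing.
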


\begin{proof}
	Suppose $\xi\in G_{n,k}$. For $z\in \rn$, let $z=(x,y)$ with $x=P_\xi z$ and $y=P_{\xi^\perp}z$.
	The set $\nu_K^{-1}(\xi\cap \sn) \subset \partial K$ is on the surface of
	the generalized cylinder that is the direct product of $\xi^\perp$ and
	$K|\xi$. The surface area element of the generalized cylinder is a direct
	product of Lebesgue measure in $\xi^\perp$ and the surface area element
	of $K|\xi$. Thus, for $z=(x,y)\in \nu_K^{-1}(\xi\cap \sn)$, we have
	$x\in \partial (K|\xi)$ and $y\in x+\xi^\perp$, while
	\begin{equation}\label{mc1}
		d\hm(z) = dy\, d\thh^{k-1}(x).
	\end{equation}
	When $\nu_K(z)$ and $\nu_{K|\xi}(x)$ are defined, we have
	\begin{equation}\label{mc2}
		z\cdot \nu_K(z) = x\cdot \nu_{K|\xi}(x).
	\end{equation}
	
	Therefore, from \eqref{mc1}, \eqref{mc2} and \eqref{gsc}, we have for $i=0, 1, \ldots, n$,
	\begin{align}
		&\int_{\nu_K^{-1}(\xi\cap S^{n-1})} (z\cdot \nu_K(z)) f_{\xi_i}(z)\, d\hm(z) \notag\\
		=& \int_{\partial(K|\xi)} \int_{(x+\xi^\perp) \cap \partial K} (x\cdot \nu_{K|\xi}(x)) f_{\xi_i}(z)\,
		dy\, d\thh^{k-1}(x) \notag\\
		=& \int_{\partial(K|\xi)} \phi(x) (x\cdot \nu_{K|\xi}(x)) d\thh^{k-1}(x) \notag \\
		=& k \int_{K|\xi}  \phi(\rho\lsub{K|\xi}(x)x) \, dx,
		\label{eq-coneH}
	\end{align}
	where $\partial (K|\xi) \ni x\mapsto\phi(x) = \int_{(x +\xi^\perp) \cap \partial K} f_{\xi_i}(z) dy$
	for $z=(x,y)\in \partial K$.

	Clearly,
	\begin{equation}\label{cm3}
		\int_K f_{\xi_i}(z)dz = \int_{K|\xi}\int_{(x+\xi^\perp) \cap K} f_{\xi_i}(z) \, dy\,dx.
	\end{equation}
	Let $\tilde x = \rho\lsub{K|\xi}(x)x$. Since $K|\xi$ is origin-symmetric, $\tilde x$ and $-\tilde x$
	are antipodal points on $\partial (K|\xi)$,
	and hence
	\[(-\tilde x + \xi^\perp)\cap K = - ((\tilde x + \xi^\perp)\cap K).\]
	Obviously $\tilde x = \rho\lsub{K|\xi}(x)x \in
	\partial K$. Thus,
	let $Q=(\tilde x + \xi^\perp)\cap K=(\tilde x +\xi^\perp) \cap \partial K$.
	There exists $t\in[0,1]$ such that $x=(1-t)\tilde x + t (-\tilde x)$.
	Then, since $K$ is convex we have,
	\[
	(x+\xi^\perp) \cap K \supset (1-t)(\tilde x + \xi^\perp)\cap K
	+ t (-\tilde x + \xi^\perp)\cap K = (1-t)Q + t(-Q).
	\]
	Since $f_{\xi_i} (z)$ is an even unimodal function, from Lemma \ref{bmi}, we have
	\begin{equation}\label{cm4}
		\int_{(x+\xi^\perp) \cap K} f_{\xi_i}(z) \,  dy \geq
		\int_{(1-t)Q + t(-Q)}  f_{\xi_i}(z)\,  dy \geq  \int_{Q}  f_{\xi_i}(z)\,  dy.
	\end{equation}
	By \eqref{cm3} and \eqref{cm4},
	\[
	\int_K f_{\xi_i}(z)dz \ge \int_{K|\xi} \int_{(\tilde x +\xi^\perp) \cap \partial K}
	f_{\xi_i}(z)\, dy\,dx = \int_{K|\xi} \phi(\tilde x)\, dx.
	\]
	This and \eqref{eq-coneH} give
	\begin{equation}\label{cm5}
		\int_K f_{\xi_i}(z)dz \ge
		\frac1k \int_{\nu_K^{-1}(\xi\cap S^{n-1})} (z\cdot \nu_K(z)) f_{\xi_i}(z)\, d\hm(z).
	\end{equation}
	Integrating both sides of \eqref{cm5} and using \eqref{eq-sub1} and \eqref{eq-sub2}, gives us
	
	\begin{equation*}
		I_{i+1}(K) \geq
		\begin{cases}
			\dfrac{n+i}{2k} G_{i+1}(K, \xi \cap S^{n-1}), \ \   &i=1, \ldots, n,
			\\
			\phantom{\cdot}  &\phantom{\cdot}
			\\
			\dfrac nk G_1(K, \xi\cap \sn), &i=0.
		\end{cases}
	\end{equation*}
	
\end{proof}

The following proposition shows that the subspace mass inequality \eqref{smq}
is sharp when $k<q-1$.

\begin{prop} Let $k$ and $q$ be integers such that $1\le k< q-1\le n$.
	For each $\xi\in G_{n,k}$, there exists
	a sequence of convex bodies  $K_j\in \kno$, such that
	\begin{equation}\label{eq-sharp}
		\lim_{j\to \infty} \frac{G_q(K_j, \xi\cap S^{n-1})}{G_q(K_j, S^{n-1})}
		= \frac{2k}{n+q-1}.
	\end{equation}
\end{prop}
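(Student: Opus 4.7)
The plan is to use thin cylinders aligned with $\xi$ as the extremal sequence. Fix an orthogonal decomposition $\rn = \xi \oplus \xi^\perp$, write $z = (z_1, z_2) \in \xi \oplus \xi^\perp$, and set
\[
K_j = \epsilon_j B^k_\xi \times B^{n-k}_{\xi^\perp}, \qquad \epsilon_j \to 0^+,
\]
where $B^k_\xi$ and $B^{n-k}_{\xi^\perp}$ are the unit Euclidean balls in $\xi$ and $\xi^\perp$. Each $K_j \in \kno$, and $\partial K_j$ decomposes (modulo an $\hm$-null edge set) into a \emph{side} $\Gamma^s_j = \epsilon_j S^{k-1}_\xi \times B^{n-k}_{\xi^\perp}$ on which the outer normal lies in $\xi \cap \sn$, and a \emph{cap} $\Gamma^c_j = \epsilon_j B^k_\xi \times S^{n-k-1}_{\xi^\perp}$ on which the outer normal lies in $\xi^\perp \cap \sn$. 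Since $\Gamma^s_j = \nu_{K_j}^{-1}(\xi\cap\sn)$, definition \eqref{c11.1} represents $G_q(K_j,\xi\cap\sn)$ as the surface integral of $(z\cdot\nu_{K_j})\,\wt V_{q-1}(K_j,\cdot)$ over $\Gamma^s_j$, and its complement in $G_q(K_j,\sn)$ as the same integrand over $\Gamma^c_j$.

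The core step is the $\epsilon_j \to 0^+$ asymptotics of $\wt V_{q-1}(K_j, z)$. Starting from the Riesz-potential formula \eqref{c2+} and substituting $u = x_1/\epsilon_j$ in the $\xi$-component,
\[
\epsilon_j^{-k}\, \wt V_{q-1}(K_j, z) = \frac{q-1}{n} \int_{B^k_\xi}\!\int_{B^{n-k}_{\xi^\perp}} \bigl(\epsilon_j^2 |u - z_1/\epsilon_j|^2 + |x_2 - z_2|^2\bigr)^{(q-1-n)/2} dx_2\, du.
\]
Since $q-1-n < 0$, the integrand is dominated pointwise by $|x_2 - z_2|^{q-1-n}$, which is Lebesgue integrable over $B^k_\xi \times B^{n-k}_{\xi^\perp}$ precisely because the hypothesis $k < q - 1$ forces $q-1-n > -(n-k)$. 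Dominated convergence therefore gives
\[
\epsilon_j^{-k}\, \wt V_{q-1}(K_j, z) \longrightarrow \frac{(q-1)\omega_k}{n}\, L(z_2), \qquad L(y) := \int_{B^{n-k}_{\xi^\perp}} |x_2 - y|^{q-1-n}\, dx_2.
\]
Inserting this into the two surface integrals---using $z\cdot\nu_{K_j} = \epsilon_j$ and an $\epsilon_j^{k-1}$ surface-area factor on $\Gamma^s_j$, versus $z \cdot \nu_{K_j} = 1$ and an $\epsilon_j^k$ surface-area factor on $\Gamma^c_j$---both $G_q$ contributions scale as $\epsilon_j^{2k}$, and (by a second application of dominated convergence on the outer surface integrals, with the function $L$ serving as dominator) the ratio converges to
\[
\lim_{j\to\infty} \frac{G_q(K_j, \xi\cap\sn)}{G_q(K_j, \sn)} = \frac{kA}{kA+B}, \qquad A := \int_{B^{n-k}_{\xi^\perp}} L\, dy, \quad B := \int_{S^{n-k-1}_{\xi^\perp}} L\, d\mathcal H^{n-k-1}.
\]

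The final step is to identify $A$ and $B$ as two representations of a single $(n-k)$-dimensional chord integral. Consider $B^{n-k}_{\xi^\perp}$ as a convex body in $\xi^\perp \cong \R^{n-k}$ with exponent $\tilde q := q - k > 1$: formula \eqref{c6.3.1} (applied in dimension $n-k$) expresses $I_{\tilde q}^{(n-k)}(B^{n-k}_{\xi^\perp})$ as an explicit constant times $A$, while Lemma \ref{I7} (noting $z\cdot\nu = 1$ everywhere on $S^{n-k-1}_{\xi^\perp}$) expresses the same quantity as another explicit constant times $B$. Comparing the two expressions yields $B/A = (n-k+\tilde q - 1)/2 = (n+q-1-2k)/2$, whence $kA + B = \tfrac12(n+q-1)\,A$ and $kA/(kA+B) = 2k/(n+q-1)$, which is exactly \eqref{eq-sharp}.

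The principal technical obstacle is organizing the two successive dominated-convergence arguments---first inside the Riesz-potential integral defining $\wt V_{q-1}(K_j, z)$, and then in the outer surface integrals defining $G_q(K_j, \cdot)$---with common dominating functions that are Lebesgue integrable exactly at the sharp threshold $k < q - 1$ and fail beyond it. The clean appearance of the constant $2k/(n+q-1)$ is obtained not by a direct divergence-theorem computation but by the observation that $A$ and $B$ arise as the two formulas of Lemmas \ref{I1} and \ref{I7} applied to the same body in the lower-dimensional setting.
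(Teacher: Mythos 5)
Your argument is correct, and it uses the same extremal sequence as the paper---product bodies with the $\xi$-factor shrinking to a point (the paper's $K_j=(\tfrac1j D)\times Q$ with general $D$, $Q$; your $\epsilon_j B^k\times B^{n-k}$)---but the computation is organized along a genuinely different route. The paper pushes $G_q(K_j,\xi\cap\sn)$ through the generalized polar-coordinate identity and the Riesz-potential formula \eqref{c2+} to rewrite it as a constant times $\int_{K\times K}|w-\tilde z|^{q-1-n}\,dw\,dz$, where one variable has been radially projected onto $\partial D$; simultaneously \eqref{c6.3.1} writes $I_q(K_j)=G_q(K_j,\sn)$ as another constant times $\int_{K\times K}|w-z|^{q-1-n}\,dw\,dz$. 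After rescaling, both kernels increase monotonically to the same limit $\int_{K\times K}|w_2-y|^{q-1-n}$, finite precisely when $k<q-1$, and the ratio $2k/(n+q-1)$ is read off from the explicit prefactors. You instead split $\partial K_j$ into a side and a cap, obtain the $\epsilon_j\to 0$ asymptotics of $\wt V_{q-1}(K_j,\cdot)$ directly by rescaling the $\xi$-variable in \eqref{c2+} and applying dominated convergence, show both boundary contributions scale like $\epsilon_j^{2k}$, and then identify the limiting constants $A$ and $B$ as two representations of the $(n-k)$-dimensional chord integral $I_{q-k}(B^{n-k})$ via \eqref{c6.3.1} and Lemma \ref{I7}. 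Both routes are valid; the paper's is shorter (a single monotone-convergence argument with no boundary decomposition and no need to invoke the lower-dimensional lemmas), while yours makes more explicit where the threshold $k<q-1$ enters (integrability of the dominator $|x_2-z_2|^{q-1-n}$ on $B^{n-k}$) and adds the pleasant observation, not in the paper, that the side/cap ratio $B/A=(n+q-1-2k)/2$ falls out of comparing Lemmas \ref{I1} and \ref{I7} applied to the same lower-dimensional ball.
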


\begin{proof} Let $D\subset \xi$ be a $k$-dimensional convex body in $\xi$, and let $Q\subset \xi^\perp$ be
	an $(n-k)$-dimensional convex body in $\xi^\perp$. Suppose that both $D$ and $Q$ contain
	the origin in their relative interiors. Let $K=  D\times Q \in \kno$.  We proceed using
	the notation in the proof
	of Theorem \ref{mc}.  Note that $K|\xi = D$ and
	$(x+\xi^\perp) \cap \partial K = x +Q$ for $x\in \partial D$.
	Similar to the calculation for \eqref{eq-coneH}, we have
	\begin{align}
		&G_q(K, \xi\cap S^{n-1})\notag\\
		=& \frac {2q}{(n+q-1)\omega_n} \int_{\nu_K^{-1}(\xi\cap \sn)}
		(z\cdot \nu_K(z)) \wt V_{q-1}(K,z)\, d\hm(z) \notag \\
		 =&\frac {2q}{(n+q-1)\omega_n} \int_{\partial(K|\xi)} \int_{(x+\xi^\perp)\cap \partial K}
		(x\cdot \nu_{K|\xi}(x)) \wt V_{q-1}(K,z)\, dy\, d\thh^{k-1}(x) \notag \\
		=&\frac {2q}{(n+q-1)\omega_n} \int_{\partial D} x \cdot \nu_D(x) \phi(x) d\thh^{k-1}(x) \notag\\
		=& \frac {2qk}{(n+q-1)\omega_n} \int_D \phi(\tilde x) dx,
		\label{coest1}
	\end{align}
	where $\phi(x)=\int_{Q} \wt V_{q-1}(K,(x,y))\, dy$, for
	$x\in \partial D$, and $y\in Q$, and
	$\tilde x = \rho\lsub{D}(x)x \in \partial D$ for $x\in D$. For $z\in K$,
	let $\tilde z = (\tilde x, y)$. Then by \eqref{c2+},
	\begin{align}\label{co1.1}
		\int_D \phi(\tilde x) dx &= \frac{q-1}n \int_{x\in D} \int_{y\in Q} \int_{w\in K}
		|w-\tilde z|^{q-1-n} dwdydx\\
		&=\frac{q-1}n\int_{w, z \in K} |w-\tilde z|^{q-1-n} dwdz.
	\end{align}
	
	Let $K_j = \big(\frac1j D\big)\times Q$. Then by \eqref{coest1} and \eqref{co1.1},
	\begin{align}
		G_q(K_j, \xi\cap S^{n-1}) &= \frac {2q(q-1)k}{(n+q-1)n\omega_n}
		\int_{w, z \in K_j} |w-\tilde z|^{q-1-n}\, dw\,dz
		\notag \\
		&= \frac {2q(q-1)k}{(n+q-1)n\omega_n j^{2k}} \int_{w, z \in K}
		\big|\big(\tfrac{w_1-\tilde x}j, w_2-y\big)\big|^{q-1-n}\, dw\,dz, \label{co1.2}
	\end{align}
	where $w=(w_1, w_2) \in K$, with $w_1\in D$, and $w_2\in Q$.
	
	Observe that from \eqref{c6.3} and \eqref{c2+}, we have
	\begin{align}
		I_q(K_j)
		&= \frac{q}{\omega_n}\int_{K_j} \wt V_{q-1}(K_j,z)dz\\
		&=  \frac{q(q-1)}{n\omega_n} \int_{w, z \in K_j} |w-z|^{q-1-n}\, dw\,dz \notag \\
		&= \frac{q(q-1)}{n\omega_nj^{2k}}\int_{w, z \in K}
		\big|\big(\tfrac{w_1-x}j, w_2-y\big)\big|^{q-1-n} dwdz.\label{coest2}
	\end{align}
	
	Since both sequences of functions in the integrals in \eqref{co1.2} and \eqref{coest2}
	are increasing with respect to $j$, by the monotone convergence theorem,
	both integrals converge to
	\[
	\int_{w, z \in K} |w_2-y|^{q-1-n}\, dw\,dz
	\]
	which is a positive number when $q-1-n > k-n$ with $\dim Q =  n-k$.
	This together with \eqref{co1.2} and \eqref{coest2} give \eqref{eq-sharp}.
\end{proof}

Note that when $k<q-1$, the mass inequality \eqref{mass1+} is the same inequality as
\eqref{smq}, and when $k\ge q-1$, inequality \eqref{mass1+} is stronger than the
inequality given by \eqref{smq}.







\thanks{The authors thank Xiaxing Cai for comments on a draft of this work. Dongmeng Xi thanks the Courant Institute for its warm hospitality during his extended visit. Research of Lutwak, Yang, and Zhang was supported, in part, by
  NSF Grant  DMS--2005875. Research of Xi was supported, in part, by NSFC Grant 12071277 and  STCSM Grant 20JC1412600.}



\frenchspacing
\bibliographystyle{cpam}

\end{document}